\newcommand{\bas}{\begin{eqnarray*}} \newcommand{\eas}{\end{eqnarray*}}
\def\tto{\;{\lower 1pt \hbox{$\rightarrow$}}\kern -12pt
           \hbox{\raise 2.8pt \hbox{$\rightarrow$}}\;}
\def\for{\hskip0.9pt|\hskip0.9pt}
\def\N{{\bf N}}
\newcommand{\bs}{{\bigskip}} \newcommand{\ms}{{\medskip}}
\newcommand{\bd}{\begin{displaymath}} \newcommand{\ed}{\end{displaymath}}
\newcommand{\be}{\begin{equation}} \newcommand{\ee}{\end{equation}}
\newcommand{\ba}{\begin{eqnarray}} \newcommand{\ea}{\end{eqnarray}}
\def\paritem#1{\vskip0cm\noindent\hskip12pt{{\rm #1}}\hskip5pt}
\def\bx{\bar x} \def\by{\bar y}  \def\bp{\bar p}
\def\ba{\bar a}  \def\bz{\bar z} \def\bu{\bar u}
\def\dom{\mathop{\rm dom}\nolimits} \def\rge{\mathop{\rm rge}\nolimits}
\def\gph{\mathop{\rm gph}\nolimits} 
\def\reg{\mathop{\rm reg}\nolimits} \def\clm{\mathop{\rm clm}\nolimits}
\def\half{{{}\raise 1pt \hbox{${{\scriptstyle1}\over{\scriptstyle 2}}$}}}
 \def\inT{\mathop{\rm int}\nolimits}
\def\oball{{\rm int}{I\kern -.35em B}}
\def\plus{{\scriptscriptstyle +}} \def\minus{{\scriptscriptstyle -}}
\def\starplus{{*\hskip-0.6pt{\raise0.5pt\hbox{$\plus$}}}}
\def\starminus{{*\hskip-0.6pt{\raise0.5pt\hbox{$\minus$}}}}
\def\epsilon{\varepsilon}             \def\phi{\varphi}
\def\reals{{I\kern-.35em R}}
\def\text#1{\;\,\hbox{#1}\;\,}       
\def\state #1. { \noindent{\bf#1.\enspace}}
\def\lset{\big\{\,}    \def\mset{\,\big|\,}   \def\rset{\big\}}
\def\Lset{\Big\{\,}    \def\Mset{\,\Big|\,}   \def\Rset{\Big\}}
\outer\def\proclaim #1. #2\par{\medbreak \noindent{\bf#1.\enspace}{\sl#2}\par
\ifdim\lastskip<\medskipamount \removelastskip\penalty55\medskip\fi}
\outer\def\proclaime #1. #2\par{\medbreak \noindent{\bf#1.\enspace}{\rm#2}\par
 \ifdim\lastskip<\medskipamount \removelastskip\penalty55\medskip\fi}
\def\qed{\hfill{$\vcenter{\hrule height1pt \hbox{\vrule width1pt height5pt
   \kern5pt \vrule width1pt} \hrule height1pt}$} \medskip}
\def\low#1{{\lower1pt \hbox{$\scriptstyle #1$}}}
\def\high#1{{\raise1pt \hbox{$\scriptstyle #1$}}}
\def\downto{{\raise 1pt \hbox{$\scriptstyle \,\searrow\,$}}}
\def\bx{\bar x} \def\by{\bar y} 
\def\dom{\mathop{\rm dom}\nolimits} \def\rge{\mathop{\rm rge}\nolimits}
\def\gph{\mathop{\rm gph}\nolimits} 
\def\la{\langle}\def\ra{\rangle}
\def\subreg{\mathop{\rm subreg }\nolimits}
\def\reg{\mathop{\rm reg}\nolimits} 
 \def\inT{\mathop{\rm int}\nolimits}
\def\plus{{\scriptscriptstyle +}} \def\minus{{\scriptscriptstyle -}}
\def\epsilon{\varepsilon}  \def\eps{\varepsilon}           \def\phi{\varphi}
\def\reals{{I\kern-.35em R}}
\def\text#1{\;\,\hbox{#1}\;\,}       
\def\state #1. { \noindent{\bf#1.\enspace}}
\def\lset{\big\{\,}    \def\mset{\,\big|\,}   \def\rset{\big\}}
\def\Lset{\Big\{\,}    \def\Mset{\,\Big|\,}   \def\Rset{\Big\}}
\outer\def\proclaim #1. #2\par{\medbreak \noindent{\bf#1.\enspace}{\sl#2}\par
  \ifdim\lastskip<\medskipamount
  \removelastskip\penalty55\medskip\fi}
\def\qed{\hfill{$\vcenter{\hrule height1pt \hbox{\vrule width1pt height5pt
   \kern5pt \vrule width1pt} \hrule height1pt}$} \medskip}
\def\low#1{{\lower1pt \hbox{$\scriptstyle #1$}}}
\def\downto{{\raise 1pt \hbox{$\scriptstyle \,\searrow\,$}}}
\def\tto{\;{\lower 1pt \hbox{$\rightarrow$}}\kern -12pt
           \hbox{\raise 2.8pt \hbox{$\rightarrow$}}\;}
\def\for{\hskip0.9pt|\hskip0.9pt}
\def\ball{{I\kern -.35em B}}
\def\for{\hskip0.9pt|\hskip0.9pt}
\def\bx{\bar x}  
\def\bu{\bar u} 
\def\by{\bar y}  \def\bp{\bar p} 
  \def\bz{\bar z} \def\bp{\bar p}
\def\dom{\mathop{\rm dom}\nolimits} \def\rge{\mathop{\rm rge}\nolimits}
\def\gph{\mathop{\rm gph}\nolimits}
 \def\inT{\mathop{\rm int}\nolimits}
\def\epsilon{\varepsilon}  \def\eps{\varepsilon}     \def\phi{\varphi}
\def\R{\mathh{R}}   \def\reals{\mathbb{R}}
\def\clm{\mathop{\rm clm}\nolimits}   
\def\reg {\mathop{\rm reg}\nolimits}
\def\mdot{\kern -.1em \cdot \kern -.1em}
\def\lset{\big\{\,}    \def\mset{\,\big|\,}   \def\rset{\big\}}
\def\Lset{\Big\{\,}    \def\Mset{\,\Big|\,}   \def\Rset{\Big\}}
\def\R{\mathbb{R}}
\newtheorem{definition}{Definition}[section]
\newtheorem{theorem}[definition]{Theorem}
\newtheorem{proposition}[definition]{Proposition}
\newtheorem{corollary}[definition]{Corollary}
\newtheorem{remark}[definition]{Remark}
\newtheorem{example}[definition]{Example}
\newcommand{\bea}{\begin{eqnarray}}
\newcommand{\eea}{\end{eqnarray}}
\newcommand{\bda}{\begin{eqnarray*}}
\newcommand{\eda}{\end{eqnarray*}}
\newcommand{\dd}{\mbox{\rm\,d}}
\begin{document}

\centerline{\bf \Large  Strong Metric  Subregularity of Mappings}

\smallskip

\centerline{\bf \Large in Variational Analysis and Optimization}
\bs
\bs

\centerline{ \bf  R. Cibulka$^1$, $\quad $
 A.  L. Dontchev$^{2}$ \ \  \text{ and } \ \  A. Y. Kruger$^3$ }

\bs \bs
\centerline{\sc Dedicated to the memory of Jonathan M.   Borwein}
\bs \bs

\begin{abstract}  Although the property of strong metric subregularity of set-valued mappings has been present in the literature under various names and with various (equivalent) definitions
for more than two decades, it has attracted much less
attention than its older ``siblings'', the metric regularity and the strong (metric) regularity.
The purpose of this paper is to show that the strong metric
subregularity shares the main features of these  two most popular regularity properties and  is not less instrumental in applications. We show that  the strong metric subregularity of a mapping $F$ acting between metric spaces is stable under perturbations of the form $f+F$, where $f$ is a function with
a small calmness constant. This result is parallel to  the Lyusternik-Graves theorem for metric regularity and to the  Robinson theorem for strong regularity, where the perturbations are represented   by a function $f$ with a small Lipschitz constant. Then we  study   perturbation stability
of the same kind  for mappings  acting between Banach spaces, where $f$ is not necessarily differentiable but admits a set-valued derivative-like approximation. Strong metric $q$-subregularity is also considered, where $q$ is a positive real constant appearing as exponent in the definition. Rockafellar's  criterion  for strong metric subregularity involving injectivity of the graphical derivative   is
 extended to   mappings acting in infinite-dimensional spaces.
A sufficient condition
for strong metric subregularity  is established in terms of surjectivity of  the Fr\'echet coderivative, and it is shown by a counterexample that surjectivity of  the limiting coderivative is not a sufficient condition  for this property, in general. Then various versions of Newton's method for solving generalized equations are considered including
inexact and semismooth methods, for which superlinear convergence is shown under strong metric subregularity. As applications to optimization, a characterization of the strong metric subregularity of the KKT mapping is obtained, as well as a radius theorem for the optimality mapping of a nonlinear programming problem. Finally,   an error estimate is derived for a discrete approximation in optimal control under strong metric subregularity of the mapping involved in the Pontryagin principle.
\end{abstract}
\bs

\noindent
\small{{\bf Key Words.}   strong metric subregularity, perturbations and approximations, generalized derivatives, Newton's method,  nonlinear programming,  optimal control.}

\bs

\noindent
\small{{\bf AMS Subject Classification (2010)}  49J53, 49K40,  90C31.}

\vspace{2 cm}

\small{$^1$Department of Mathematics, Faculty of Applied Sciences, University of West Bohemia, Univerzitn\'\i\ 22, 306 14  Pilsen, Czech Republic, cibi@kma.zcu.cz. Supported by the project GA15-00735S.

$^2$Mathematical Reviews, 416 Fourth Street,  Ann Arbor, MI 48107-8604, USA, ald@ams.org;   Institute of Statistics and Mathematical Methods in Economics,
Vienna University of Technology, Wiedner Hauptstrasse 8, A-1040, Austria. Supported by
NSF, grant 1562209,  the Austrian Science Foundation
(FWF),  grant
P26640-N25,
and the Australian Research
Council,
project DP160100854.

$^3$ Centre for Informatics and Applied Optimization, Federation University Australia, POB 663, Ballarat, VIC 3350, Australia, a.kruger@federation.edu.au. Supported by the Australian Research Council, project DP160100854.}

\newpage

\section{Introduction}

There are three basic properties of linear mappings in  analysis  and topology: surjectivity, injectivity and invertibility. Specifically, a linear and bounded mapping
$A$ acting from a Banach space $X$ to a Banach space $Y$ is  said to be surjective when for every $y \in Y$ there exists $x \in X$ such that $Ax=y$;
it is said to be injective when $Ax  = 0$ implies $x=0$; it is   said to be invertible when  for every $y \in Y$ there exists a unique  $x \in X$ such that $Ax=y$.
The combination of surjectivity and injectivity implies invertibility and  in this case the inverse mapping $A^{-1}$ is  linear and bounded. When $X=Y=\reals^n$ all three properties are equivalent. An extension of surjectivity  to nonlinear/set-valued
mappings which goes back to the Banach open mapping principle is the well-known property of {\em metric regularity}, a name
coined
by Borwein in \cite{Bor}. An extension of  invertibility, which is particularly useful in optimization,
is known as {\em strong metric regularity}, a property introduced  by Robinson in
 \cite{Rob}. In this paper we
 focus  on an extension of  injectivity to nonlinear/set-valued mappings  called {\em strong metric subregularity}, for which in this paper  we also  use the name   ``strong subregularity" for short.
 Although this property  has been present in the literature under various names
and with various (mostly equivalent) definitions for more than two decades, it has attracted much less
attention than its older ``siblings'', the metric regularity and the strong (metric) regularity.
The purpose of this paper is to demonstrate that the strong subregularity shares the main features of the other two regularity properties and  is not less instrumental in applications.

To put the stage, let us first fix the  notations and terminology. Throughout, $X$ and $Y$  are metric spaces in general  and any metric is denoted by $\rho(\cdot, \cdot)$.  The space $Y$ also appears as  a linear metric space with shift invariant metric, that is, a
metric with the property that $\rho(y_1+y,y_2+y)=\rho(y_1,y_2)$ for all $y_1,y_2,y\in Y$.  Both $X$ and $Y$
could also be  Banach spaces and this is always explicitly stated or
clear from the context.
A norm is generally denoted by  $\|\cdot\|$, sometimes with a subscript indicating a specific space.  The $n$-dimensional Euclidean space is denoted by $\reals^n$ and the set of nonnegative integers is denoted by $\N$.
 The distance from a point $x$ to a set $A$ in  a metric space is
${d(x, A)} = \inf_{y \in A}\rho(x,y)$; the distance to the empty set is always $+\infty$.
 The closed ball  centered at $x$ with radius $r$ is denoted by $\ball_r(x)$ and the closed  unit ball is $\ball$. A set $U$ is said to be a neighborhood of a point $x$ when there exists a real $r>0$ such that $\ball_r(x) \subset U$.

A set-valued mapping $F$ acting from $X$ to the subsets of $Y$, denoted $F:X\tto Y$, is associated with  its graph $\,\gph F = \lset
(x,y)\in X\times Y\mset y\in F(x)\rset$, its  domain  $\,\dom F = \lset
x \in X \mset F(x) \neq \emptyset \rset$ and its range $\,\rge F = \lset
y \in Y \mset \exists \, x \in X \text{with} y \in F(x)\rset$. The  inverse
of $F$ is defined as $y \mapsto F^{-1}(y) = \lset {x \in X} \mset
y\in F(x)\rset$.  The space of all linear bounded (single-valued) mappings acting between Banach spaces  $X$ and $Y$ and   equipped with the standard operator norm is  denoted by ${\mathcal L}(X,Y)$.  A mapping $H$ acting between Banach spaces $X$ and $Y$ is said to be positively homogeneous when its graph is a cone. For a  positively homogeneous mapping $H:X\tto Y$ the expression
$\sup_{\|x\|\leq 1} d(0,H({x}))$ is said to be the { inner norm} of $H$   and denoted by
$\|H\|^-$, while  the expression
$\sup_{\|x\|\leq 1}\sup_{y\in H(x)}\|y\|$ is  the { outer norm} of $H$ and denoted by
$\|H\|^+$. Also, recall that the {\em measure of non-compactness} \cite{AKPRS}  of a set $\mathcal{A}$ is  defined as
$$
 \chi (\mathcal{A}) = \inf\bigg\{  r > 0 {\mset} \ \mathcal{A} \subset \bigcup\Big\{ \ball_r(A) {\mset} \
A\in{\mathcal B}\Big\}, \ {\mathcal B}\subset  \mathcal{A} \ {\rm finite}\bigg\}.
$$

Given a (set-valued) mapping $F$ acting from a metric space $X$ to (the subsets of) a metric space $Y$, a point $(\bx,\by) \in \gph F$ and neighborhoods $U$ of $\bx$ and $V$ of $\by$, the submapping $U \ni x \mapsto F(x)\cap V$   is said to be  a {\em graphical  localization at $\bx$ for $\by$}.  Local invertibility of $F$ at $(\bx, \by)$   is identified with $F^{-1}$  having  a localization at $\by$ for $\bx$ which is  single-valued (a function).
The most known manifestation of invertibility of a (nonlinear) function is the classical inverse function theorem: the inverse $f^{-1}$  of
a strictly differentiable at $\bx$  function
$f:X\to Y$ between Banach spaces has a strictly differentiable at $f(\bx)$   single-valued localization at $f(\bx)$ for $\bx$ if and only if
the strict derivative $Df(\bx)$   is invertible. For a general mapping $F:X\tto Y$,  the property  that $F^{-1}$ has a  Lipschitz continuous  single-valued localization at $\by$ for $\bx$ is known as  {\em strong metric regularity} of $F$ at $\bx$ for $\by$. In this paper we also use the shorter name {\em strong regularity}  as in Robinson's original definition in \cite{Rob} which, strictly speaking, is somewhat different but is based on the same idea.

A mapping $F:X \tto Y$
is said to be  {\em   metrically regular} at $\bx$ for $\by$ when $\by \in F(\bx)$,  $\gph F$ is locally closed at $(\bx,\by)$,
meaning that there exists a  neighborhood $W$ of $(\bx, \by)$ such that the set $\gph F\cap W$ is closed in $X \times Y$, and   there is a constant $\kappa \geq 0$ along  with
neighborhoods $U$ of $\bx$ and $V$ of $\by$ such that
\be \label{mr}
      d \big(x, F^{-1}(y) \big) \leq \kappa d \big(y, F(x)\big)
      \quad \text{for every }  (x,y) \in U \times V.
\ee
The infimum of $\kappa \geq 0$ for which there exist neighborhoods $U$ and $V$ such that \eqref{mr} holds is called the regularity modulus of $F$
 and denoted $\reg(F;\bx\for \by)$.  We use the convention that $\reg(F;\bx\for\by) < +\infty$ if and only if  $F$ is metrically regular at $\bx$ for $\by$. A  mapping   $A  \in {\mathcal L}(X,Y)$ is metrically regular at any point if and only if it is surjective in which case $\reg A =  \|A^{-1}\|^-$; this comes from the Banach open mapping principle.  A mapping $F$ is strongly regular at $\bx$ for $\by$
if and only if $F$ is metrically regular at $\bx$ for $\by$ and the inverse $F^{-1}$ has a graphical localization at  $\by$ for $\bx$ which is nowhere multivalued; in this case for every  $\ell > \reg (F; \bx\for \by)$ there exists a neighborhood of $\by$ where  the localization is  Lipschitz continuous  with a Lipschitz
constant $\ell$.

A  generally set-valued  mapping $F$ acting from  a metric space $X$ to  the subsets
of a metric space $Y$
is said to be  {\em strongly metrically subregular} at $\bx$ for $\by$ when
$\by \in F(\bx)$ and there is a constant $\kappa \geq 0$ along  with
neighborhoods $U$ of $\bx$ and $V$ of $\by$ such that
\be\label{sr}
 \rho(x, \bx)   \leq \kappa d(\by, F(x)\cap V)
      \; \text{for all} x \in U.
\ee
This property can be equivalently defined, see \cite[Section 3I, p. 194]{book} with just one neighborhood $U$ by adjusting its size, as follows:
there is a constant $\kappa \geq 0$ along  with  a
neighborhood $U$ of $\bx$ such that
\be \label{sr1}
 \rho(x, \bx)  \leq \kappa d(\by, F(x))
      \; \text{for all} x \in U.
\ee
Either definition  yields that $\bx$ is the only point in $U$ such that $\by \in F(\bx)$; that is, $\bx$ is an {\em isolated point} of $F^{-1}(\by)$.
The infimum of $\kappa \geq 0$ over neighborhoods $U$ and $V$ such that \eqref{sr} holds
(or over $U$  such that \eqref{sr1} holds) is called the subregularity modulus of $F$
 and denoted by $\subreg(F;\bx\for \by)$. We adopt the convention that $\subreg(F;\bx\for\by) =+\infty$ whenever  $F$ is not strongly subregular at $\bx$ for $\by$.
Note that we do not assume that the graph of $F$ is locally closed at the reference point in the definition of strong subregularity.  A mapping   $A\in {\cal L}(X,Y)$ whose range is closed  is strongly subregular everywhere if and only if it is injective; in this case
$\subreg A =  \|A^{-1}\|^+$; note that in finite dimensions the range of a linear bounded mapping is always closed.

There is a close connection between strong metric subregularity and the properties of the distance function $(x,y) \mapsto d(y, F(x))$, see \cite[Theorem 3I.5]{book}. Directly from the definition it follows that  a set-valued mapping $F: X \tto Y$ is strongly subregular at $\bx$ for $\by$ if and only if $\bx$ is a local sharp minimizer of the function $x \mapsto d(\by, F(x))$. Recall that  a point $\bx \in \dom \varphi$ is called a {\it local sharp minimizer} of  a function $\varphi: X \to \reals\cup \{+ \infty\}$
whenever  there is a neighborhood $U$ of $\bx$ and a constant $\beta > 0$ such that
$
 \varphi (x)  \geq \varphi (\bx) + \beta \rho(x, \bx)  \;\text{for all}  x \in U.
$

A mapping $F:X\tto Y$ is strongly subregular at
$\bx$ for $\by$  if and only if its inverse $F^{-1}$
has the so-called {\em isolated calmness property} at $\by$ for $\bx$.
Specifically, whenever $F$ is strongly subregular at $\bx$ for $\by$   there exist a constant $\mu \geq 0$ and neighborhoods $U$ of $\bx$ and $V$
of $\by$ such that
\be\label{isolc}
  F^{-1}(y)\cap U \,\subset\,
  \ball_{\mu \rho(y,\by)}(\bx)  \;\text{for all} y\in V.
\ee
Moreover, the infimum of all $\mu$ such that this inclusion
holds for some neighborhoods $U$ and
$V$, which we denote as $\clm (F^{-1};\by\for\bx)$,  equals $\,\subreg (F;\bx\for\by)$.  The proof of this statement is straightforward, see e.g. \cite[Theorem 3I.3]{book} where it is stated in finite dimensions but can be easily translated into the language of metric spaces.

Strong subregularity and isolated calmness
have been considered in various contexts and under various names in
the literature.
Isolated calmness was formally introduced by the second author in \cite{don95} under the name ``local upper Lipschitz continuity at a point"; in the same paper the perturbation  stability of this property
 was first proved. The equivalent property of strong subregularity
was considered earlier, without giving it a name, by Rockafellar~\cite{R89}. The name ``strong metric subregularity" was first used in \cite{regcon} where its equivalence with the isolated calmness was proved.

In finite dimensions there is a class of strongly subregular mappings with a particularly simple description.   The following   theorem is based on   an important result   by Robinson
 \cite{Rob1}:

\begin{theorem}\label{poly} Consider a mapping $F:\reals^n\tto\reals^m$ whose graph is the union of finitely many  polyhedral convex sets. Then $F$ is strongly  subregular at $\bx$
for $\by$ if and only if $\bx$ is an isolated point of $F^{-1}(\by)$.
\end{theorem}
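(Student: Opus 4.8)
The plan is to establish the nontrivial implication: if $\bx$ is an isolated point of $F^{-1}(\by)$, then $F$ is strongly subregular at $\bx$ for $\by$ (the converse being immediate from the definition, as noted in the excerpt). The key structural fact is that $\gph F = \bigcup_{i=1}^{N} C_i$ with each $C_i$ a polyhedral convex set in $\reals^n\times\reals^m$. By Robinson's polyhedral result \cite{Rob1}, a set-valued mapping with polyhedral graph is calm everywhere with a uniform modulus; more precisely, for a mapping $G$ whose graph is polyhedral convex, there is a constant $\kappa_G$ such that $d(x,G^{-1}(y')) \le \kappa_G\,\rho(y,y')$ whenever $x \in G^{-1}(y)$, locally around any reference pair --- this is the ``upper Lipschitz'' phenomenon for polyhedral multifunctions. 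The first step is therefore to reduce from a finite union to each piece: writing $F_i$ for the mapping with $\gph F_i = C_i$, one has $F^{-1}(\by) = \bigcup_i F_i^{-1}(\by)$, so $\bx$ isolated in $F^{-1}(\by)$ forces $\bx$ to be isolated in (or absent from) each $F_i^{-1}(\by)$.

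Next I would handle a single polyhedral-convex piece. Fix $i$ with $\bx \in F_i^{-1}(\by)$ (discard the others by shrinking $U$ so that $U$ meets $F_j^{-1}(\by)$ only for those $j$ containing $\bx$, possible since there are finitely many pieces and $\bx$ is isolated in each). Since $F_i^{-1}(\by)$ is a polyhedral convex set having $\bx$ as an isolated point, it must be the singleton $\{\bx\}$. Now apply Robinson's upper-Lipschitz estimate to $F_i$ at $\bx$ for $\by$: there exist $\kappa_i \ge 0$ and a neighborhood $U_i$ of $\bx$ such that for all $x \in U_i$ and all $y \in F_i(x)$, $\rho(x, F_i^{-1}(y)) $ is controlled --- but I actually want it the other way: I want $\rho(x,\bx) \le \kappa_i\, d(\by, F_i(x))$ for $x \in U_i$. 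This follows from the polyhedral calmness of $F_i^{-1}$ together with $F_i^{-1}(\by) = \{\bx\}$: for $x \in U_i$ and any $y \in F_i(x)$, $x \in F_i^{-1}(y)$, so $d(x,\{\bx\}) = d(x, F_i^{-1}(\by)) \le \kappa_i \rho(y,\by)$; taking the infimum over $y \in F_i(x)$ gives $\rho(x,\bx) \le \kappa_i\, d(\by, F_i(x))$. (If $F_i(x) = \emptyset$ the right side is $+\infty$ and the inequality is vacuous.)

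Finally, patch the pieces together. Let $U = \bigcap_{i \in I} U_i$, where $I$ is the set of indices with $\bx \in C_i$, intersected with a neighborhood small enough that $U \cap F_j^{-1}(\by') $ stays close to pieces in $I$ for $y'$ near $\by$ --- concretely, choose $U$ so that for $x \in U$, if $\by \in F(x)$ then $(x,\by) \in C_i$ for some $i \in I$, and more generally $F(x) = \bigcup_{i \in I} F_i(x)$ on $U$ after shrinking (using that the finitely many pieces $C_j$, $j \notin I$, are closed and miss $(\bx,\by)$, hence miss a neighborhood of it). Set $\kappa = \max_{i\in I}\kappa_i$. Then for $x \in U$, $d(\by,F(x)) = \min_{i\in I} d(\by, F_i(x)) = d(\by, F_{i_0}(x))$ for some $i_0 \in I$, and $\rho(x,\bx) \le \kappa_{i_0}\, d(\by, F_{i_0}(x)) \le \kappa\, d(\by, F(x))$, which is exactly \eqref{sr1}.

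The main obstacle is the bookkeeping in the reduction from the union to the individual pieces: one must argue carefully that shrinking the neighborhood $U$ genuinely eliminates the interference of pieces $C_j$ not containing $(\bx,\by)$ --- both in the domain (points $x$ near $\bx$ whose only $F$-values come from relevant pieces) and in the range (the distance $d(\by,F(x))$ being realized on a relevant piece). This uses finiteness of the union and closedness of each polyhedral piece in an essential way, but is otherwise elementary. The genuinely substantive input --- that a polyhedral convex multifunction is upper-Lipschitz with uniform modulus --- is quoted from Robinson \cite{Rob1} and is not reproved here.
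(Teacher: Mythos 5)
The paper itself gives no proof of this theorem: it is stated as a direct consequence of Robinson's result in \cite{Rob1}, which asserts that any polyhedral multifunction (graph equal to a finite union of polyhedral convex sets) is locally upper Lipschitz at every point. The intended argument applies that result to $F^{-1}$ as a whole --- no decomposition is needed, since $\gph F^{-1}$ is again a finite union of polyhedral convex sets --- giving $F^{-1}(y)\subset F^{-1}(\by)+\kappa\|y-\by\|\ball$ for $y$ near $\by$, and then combines this with the isolation of $\bx$ in $F^{-1}(\by)$ to obtain the isolated calmness property \eqref{isolc}, which is equivalent to strong subregularity. Your route --- split $\gph F$ into convex pieces, use the convex case of Robinson's estimate on each piece, and patch --- is a genuinely different (and legitimate) path, essentially re-deriving the union case from the convex case; in particular the observation that isolation plus convexity forces $F_i^{-1}(\by)=\{\bx\}$ is correct.

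However, your patching step contains a concrete error. You cannot shrink $U$ so that $F(x)=\bigcup_{i\in I}F_i(x)$ on $U$, and the equality $d(\by,F(x))=\min_{i\in I}d(\by,F_i(x))$ can fail: a piece $C_j$ with $(\bx,\by)\notin C_j$ may still have $F_j(x)\neq\emptyset$ for $x$ arbitrarily close to $\bx$ (its values are merely far from $\by$), and the distance can be realized on such a piece. For instance, with $C_1=\{(0,0)\}$ and $C_2=\reals\times\{1\}$ at $(\bx,\by)=(0,0)$ one has $I=\{1\}$ but $F_1(x)=\emptyset$ for every $x\neq 0$, so $d(\by,F(x))$ is attained on $C_2\notin I$. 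The repair is easy and in fact simpler than what you attempted: for $j\notin I$, closedness of $C_j$ yields $\epsilon_j>0$ with $d(\by,F_j(x))\geq\epsilon_j$ for $x$ near $\bx$, so after shrinking $U$ (and taking $\kappa\geq 1$, say) the inequality $\rho(x,\bx)\leq\kappa\,d(\by,F_j(x))$ holds trivially for those $j$; then take the minimum over all $i$, not only $i\in I$. A second, minor point: the upper Lipschitz estimate for $F_i^{-1}$ is valid only for $y$ in a neighborhood of $\by$, so before taking the infimum over all $y\in F_i(x)$ you must either restrict to such $y$ (i.e., target the two-neighborhood form \eqref{sr} rather than \eqref{sr1}) or shrink $U$ so that the inequality is trivial for far-away $y$. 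With these two repairs your argument goes through.
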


The strong subregularity obeys  the paradigm of the inverse function theorem, by which we mean that the property   is stable (persistent)  under addition of  a function whose calmness  constant is smaller than  the reciprocal of the subregularity modulus. The metric regularity and the strong regularity also obey this paradigm but when the function added to the mapping has  a Lipschitz constant smaller than the reciprocal of the regularity modulus. In the case when the mapping is represented by a strictly differentiable function this yields that all three  properties are preserved  under linearization.

If we fix $y=\by$ in the definition of metric regularity \eqref{mr} we obtain the property of {\em metric subregularity:}
\be \label{msr}
      d \big(x, F^{-1}(\by) \big) \leq \kappa d \big(\by, F(x)\big)
      \quad \text{for every }  x \in U.
\ee
In contrast to metric regularity, the property \eqref{msr} does not obey the paradigm of the inverse function theorem, as explained in  
\cite[Section 3.8]{book}.
Indeed, from Theorem~\ref{poly}  every linear  mapping between $\reals^n$ and
$\reals^m$ is metrically subregular, but not every smooth function has this property.  Nevertheless,
  for some special
kinds of mappings one may expect stability criteria in terms of infinitesimal approximations, see \cite{gfr}.

 The following proposition puts together the strong regularity, the metric regularity, and the strong subregularity of a function  $f$ at $\bx$ against the invertibility, surjectivity and injectivity  of its strict derivative
$Df(\bx)$.  With some abuse of notation, for a function $f$  we say that $f$ is (strongly) metrically (sub)regular at $\bx$ and  write
(sub)$\reg(f;\bx)$ instead of  (sub)$\reg(f;\bx\for f(\bx))$.

\begin{proposition}\label{propo}
Let $X$ and $Y$ be Banach spaces and let  $f:X\to Y$ be strictly differentiable at $\bx$. Then
\paritem{(i)} $f$ is strongly regular at $\bx$  if and only if  $Df(\bx)$ is invertible, in which case
$\reg(f;\bx) = \|Df(\bx)^{-1}\|;$
\paritem{(ii)} $f$ is  metrically regular at $\bx$  if and only if  $Df(\bx)$ is surjective, in which case $\reg(f;\bx) = \|Df(\bx)^{-1}\|^-;$
 \paritem{(iii)}  Suppose that $\rge Df(\bx)$ is closed. Then $f$ is strongly  {subregular} at $\bx$  if and only if  $Df(\bx)$ is injective, in which case $\subreg(f;\bx) = \|Df(\bx)^{-1}\|^+.$ Moreover, in this case it is sufficient to assume that $f$ is Fr\'echet differentiable at $\bx$.
\end{proposition}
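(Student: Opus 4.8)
Parts (i) and (ii) are classical --- the quantitative inverse function theorem and the Lyusternik--Graves theorem --- and I would obtain them by the standard route: write $f = A + g$ with $A := Df(\bx)$ and $g := f - A$, and note that strict differentiability of $f$ at $\bx$ makes $Dg(\bx) = 0$, so the Lipschitz modulus of $g$ near $\bx$ can be taken arbitrarily small. Since a bounded linear $A$ between Banach spaces is strongly regular (resp.\ metrically regular) at every point exactly when it is invertible (resp.\ surjective), with $\reg A = \|A^{-1}\|$ (resp.\ $\reg A = \|A^{-1}\|^-$), Robinson's perturbation theorem for strong regularity (resp.\ the Lyusternik--Graves theorem for metric regularity) applied to $f = A + g$ gives the ``if'' parts, and its quantitative form, on letting the Lipschitz modulus of $g$ shrink to $0$, gives the modulus formulas; applying the same theorems to $A = f + (-g)$ transfers the regularity of $f$ back to $A$ and yields the ``only if'' parts (cf.\ \cite{book}).

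The substance is part (iii), which I prove directly, using only the Fr\'echet expansion of $f$ at $\bx$: with $A := Df(\bx)$, the remainder $r(x) := f(x) - f(\bx) - A(x-\bx)$ satisfies $\|r(x)\| = o(\|x-\bx\|)$. Because $f$ is a function, the one-neighborhood form \eqref{sr1} at $\bx$ for $\by = f(\bx)$ reads simply $\|x-\bx\| \le \kappa\,\|f(x) - f(\bx)\|$ for $x \in U$. Assume first that $A$ is injective and $\rge A$ is closed. Then $\rge A$ is a Banach space and $A\colon X \to \rge A$ is a continuous linear bijection, so by the open mapping theorem its inverse is bounded; equivalently $\|Ax\| \ge c\,\|x\|$ for all $x \in X$ with some $c > 0$, the largest admissible $c$ being $1/\|A^{-1}\|^+$ (here $A^{-1}$ is the set-valued, positively homogeneous inverse, single-valued on $\rge A$ and empty off it, whose outer norm is finite precisely because $A$ is bounded below). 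Fix $\epsilon \in (0,c)$ and shrink $U$ so that $\|r(x)\| \le \epsilon\,\|x-\bx\|$ on $U$; the reverse triangle inequality then gives $\|f(x)-f(\bx)\| \ge \|A(x-\bx)\| - \|r(x)\| \ge (c-\epsilon)\|x-\bx\|$ for $x \in U$, which is \eqref{sr1} with $\kappa = 1/(c-\epsilon)$. Hence $f$ is strongly subregular at $\bx$ and, letting $\epsilon \downarrow 0$, $\subreg(f;\bx) \le 1/c = \|A^{-1}\|^+$.

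For the converse and the reverse modulus inequality, suppose $f$ is strongly subregular at $\bx$, so \eqref{sr1} holds with some $\kappa \ge 0$ on some neighborhood $U$. For a unit vector $u \in X$, the point $x_t := \bx + tu$ lies in $U$ for small $t > 0$, and \eqref{sr1} gives $t = \|x_t - \bx\| \le \kappa\,\|f(x_t) - f(\bx)\| = \kappa\big(t\,\|Au\| + \|r(x_t)\|\big)$; dividing by $t$ and letting $t \downarrow 0$ yields $\|Au\| \ge 1/\kappa$. Thus $\|Ax\| \ge \|x\|/\kappa$ for all $x \in X$, which makes $A$ injective, forces $\rge A$ to be closed (a Cauchy-sequence argument, since $A$ is bounded below on all of $X$), and gives $\|A^{-1}\|^+ \le \kappa$; taking the infimum over admissible $\kappa$, $\|A^{-1}\|^+ \le \subreg(f;\bx)$. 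Combining with the previous paragraph (now applicable, $A$ being injective with closed range) proves the equivalence and the equality $\subreg(f;\bx) = \|A^{-1}\|^+$. The ``moreover'' claim is immediate: neither estimate used anything about $f$ beyond $\|r(x)\| = o(\|x-\bx\|)$ --- the first the reverse triangle inequality on all of $U$, the second its restriction to the segments $\bx + tu$ --- so strict differentiability is never needed in (iii).

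The only delicate step, and thus the main obstacle, is the implication ``$A$ injective with closed range'' $\Rightarrow$ ``$\|Ax\| \ge c\|x\|$ for all $x$, with $c = 1/\|A^{-1}\|^+$'': it requires applying the open mapping theorem to $A$ regarded as a surjection onto its (closed, hence Banach) range, and keeping the bookkeeping straight that $A^{-1}$ here denotes the positively homogeneous set-valued inverse and that its outer norm is finite exactly when $A$ is bounded below. Everything else --- the $o(\cdot)$ manipulations, the limit along rays, the passage from a constant to an infimum --- is routine. A minor point worth heeding is to run the whole argument with the single-neighborhood definition \eqref{sr1} rather than with \eqref{sr}, so that the neighborhood $V$ of $\by$ need not be carried through the estimates.
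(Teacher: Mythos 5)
Your argument is correct, and for part (iii) it is more self-contained than what the paper actually does. The paper never proves Proposition~\ref{propo} in one place: (i) and (ii) are dispatched to the classical inverse function and Lyusternik--Graves theorems exactly as you do, while (iii) is obtained later as a corollary of the general calmness-perturbation machinery --- Theorem~\ref{main} and Corollary~\ref{prop1} give Theorem~\ref{2} (strong subregularity of $f+F$ is equivalent to that of the linearization, for $f$ merely Fr\'echet differentiable), which with $F\equiv 0$ reduces (iii) to the purely linear question; that linear question is then settled by the characterization \eqref{srlinear} together with the observation that an injective operator with closed range is bounded below by the open mapping theorem. You instead redo the nonlinear-to-linear reduction by hand: the reverse triangle inequality on the Fr\'echet remainder gives the ``if'' direction and $\subreg(f;\bx)\le\|Df(\bx)^{-1}\|^+$, and the ray argument $x_t=\bx+tu$ gives the ``only if'' direction and the opposite modulus inequality. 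The linear core (open mapping theorem on the closed range, identification of the best lower bound with $1/\|Df(\bx)^{-1}\|^+$) is identical to the paper's discussion following Theorem~\ref{2}. What the paper's route buys is reusability --- the same perturbation theorem later drives the set-valued and nonsmooth results --- whereas your direct estimates are shorter for this proposition and make the ``moreover'' claim (Fr\'echet differentiability suffices) transparent; your ray argument also shows, without assuming it, that strong subregularity of $f$ forces $Df(\bx)$ to be bounded below and hence to have closed range, which complements the paper's Example~\ref{exLinear} showing that injectivity alone is not enough.
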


The first statement
is a version of the classical inverse function theorem. The second statement
follows from the Lyusternik-Graves theorem. We will present a  general version of the third statement
in Section~\ref{pert} {where we also show that  in infinite dimensions  the assumption regarding the closedness of the range of the derivative mapping cannot be removed.}

From Proposition~\ref{propo} we obtain that if a smooth {function} is both strongly subregular and metrically regular at $\bx$, then
it is strongly   regular at $\bx$. This is not true however for set-valued mappings even if we require  strong subregularity around the reference point. As a counterexample, take $F(x) = \{-x, x\}, x \in \reals$, which is  both strongly subregular and  metrically regular at $0$ for $0$, strongly regular at every point
in its graph different from the origin, and not strongly  regular at $0$ for $0$.

In this paper we present a collection of new  results regarding strong metric subregularity; we  also
give  extended  versions of known results which  is
clearly indicated in the text. The paper has two main parts. The first part  presents theoretical results mostly related to stability of strong subregularity with respect to (derivative-type) approximations.   First we focus on showing perturbation stability  in general metric spaces and some consequences for differentiable functions and polyhedral mappings in finite dimensions. Then we deal with mappings of the form $f+F$ where $f$ is a not necessarily differentiable function
 and $F$ is a set-valued mapping. Section~\ref{Secq} shows extensions to the so-called strong $q$-subregularity. In Section~\ref{gd} a partial extension of Rockafellar's  criterion  for strong subregularity is obtained  for  mappings acting in infinite-dimensional spaces.
A sufficient condition
for strong subregularity  is established in terms of surjectivity of  the Fr\'echet coderivative, and it is shown by a counterexample that surjectivity of  the limiting coderivative cannot serve as a sufficient condition  for this property to hold.

The second  part of the paper
 is devoted to applications  that are the main motivation of
 this study. We consider first  various versions of Newton's method  including
inexact and semismooth methods, for which a specific mode of convergence is shown under strong subregularity. For a standard  nonlinear programming problem, a characterization of the strong subregularity of the  optimality mapping is obtained in terms of a strong form of the Mangasarian-Fromovitz constraint qualification and a quadratic growth condition for the objective function.
A related result  is obtained in  \cite{AAG2} for 
a proper lower semicontinuous convex function $g : X \to \reals \cup \{+\infty\}$ defined
on a Banach space $X$, whose dual is denoted by $X^*$. Namely, it is shown that the subdifferential mapping $\partial g: X \tto X^*$, understood in the sense of convex analysis, is strongly subregular at a point $(\bx, \bx^*) \in \gph \partial g$  if and
only if there exist positive constants $\beta$ and $\delta$ such that
$$
 g(x)  \geq g (\bx) + \langle \bx^*, x - \bx \rangle + \beta \|x-\bx\|^2 \quad \mbox{whenever} \quad x \in \ball_\delta(\bx),
$$
where $\langle \cdot, \cdot \rangle: X^* \times X \to \reals$ denotes the duality pairing.  Generalizations of the above results to a non-convex function $g$ by using limiting subdifferential and under appropriate additional assumptions can be found in \cite[Corollary 3.3 and 3.5]{DMN}, see also \cite{WW}. If $X=\reals^n$, a relation of strong subregularity of the limiting subdiferential and quadratic growth of a semi-algebraic function $g$ can be found in \cite[Theorem 3.1]{DI}.          

As another application,  a radius theorem for the optimality mapping for a nonlinear programming problem  is proven, giving an expression  for the minimal perturbation of the objective function by a quadratic form for which the second-order sufficient optimality condition is violated. Finally,  an error estimate is derived for a discrete approximation in optimal control under strong subregularity of the mapping involved in the Pontryagin principle.

\section{Perturbed strong subregularity}\label{pert}

Recall \cite[Section~1.3]{book}
that a function  $g$ acting between metric spaces $X$ and $Y$   is said to be calm at
$\bx$ when $\bx \in \dom g$ and
there exist
a neighborhood $U$ of $\bx$ and a constant $\mu \geq 0$
such that
\be\label{calm}
\rho(g(x), g(\bx))\leq \mu\rho(x,\bx) \quad \text{for every} \quad x \in U \cap\dom g.
\ee
The infimum of $\mu \geq 0$ such that \eqref{calm} holds for some neighborhood $U$ of $\bx$ is the {\it calmness modulus} of $g$ at $\bx$ and is denoted by $\clm(g;\bx)$.
Note that $\bx$ does not have to be an interior point of $\dom g$.

The following theorem shows that the strong subregularity obeys the paradigm of the inverse function theorem: the property is preserved under perturbations by a function with a small calmness modulus. A version of it appeared first in \cite[Theorem 3.2]{don95} and was echoed later in other publications. More recently,  \cite[Theorem 3I.7]{book} uses an equivalent definition of strong subregularity and is given in finite dimensions, while the proof in   \cite[Theorem 3.2]{u} uses  the notion of the steepest displacement rate of a set-valued mapping. The proof given here is just an application of the definitions; we present it for completeness.

\begin{theorem}\label{main}
Suppose {that}  $X$ is a metric space and $Y$ is a linear metric space with shift invariant metric.
Let  {$a$,  $\kappa$, and $\mu$} be positive constants such that $\kappa \mu < 1$. Consider  a mapping $G:X \tto Y$ which is strongly subregular at $\bx$ for $\by$ with a constant $\kappa$ and a neighborhood $\ball_a(\bx)$, and  a function $g:X\to Y$ which  is calm
at $\bx$ {with a constant $\mu$ and a neighborhood $\ball_a(\bx)$.}
Then $g+G$ is  strongly subregular at $\bx$ for $\by+g(\bx)$ with the constant $(\kappa^{-1}-\mu)^{-1}$  and the  neighborhood ${\ball_a(\bx)}$; in particular
$$\subreg(g+G;\bx\for \by+g(\bx)) \leq \frac{\kappa}{1-\kappa\mu}.$$
\end{theorem}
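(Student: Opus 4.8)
The plan is to unwind the definitions of strong subregularity and calmness and chain the two inequalities, controlling the error term by the product $\kappa\mu < 1$. The only subtlety is bookkeeping with the two neighborhoods and with the shift-invariance of the metric on $Y$; there is no deep obstacle.

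First I would fix the data: by hypothesis $\by \in G(\bx)$, so $\by + g(\bx) \in (g+G)(\bx)$, which is the first requirement in the definition \eqref{sr1}. Take any $x \in \ball_a(\bx)$. The strong subregularity of $G$ with constant $\kappa$ on $\ball_a(\bx)$ gives
$$
\rho(x,\bx) \le \kappa\, d\big(\by, G(x)\big).
$$
Now I want to replace $d(\by, G(x))$ by $d(\by+g(\bx), (g+G)(x))$. Observe that $(g+G)(x) = g(x) + G(x)$, and since the metric on $Y$ is shift invariant, for any $v \in G(x)$ we have $\rho(\by + g(\bx),\, g(x)+v) = \rho(\by + g(\bx) - g(x),\, v)$. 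Taking the infimum over $v \in G(x)$,
$$
d\big(\by+g(\bx),\,(g+G)(x)\big) = d\big(\by + g(\bx) - g(x),\, G(x)\big).
$$
Then by the triangle inequality for the distance function (which holds in any metric space: $|d(y_1,A) - d(y_2,A)| \le \rho(y_1,y_2)$), again using shift invariance to identify $\rho(\by, \by + g(\bx) - g(x)) = \rho(g(x), g(\bx))$,
$$
d\big(\by, G(x)\big) \le d\big(\by + g(\bx) - g(x),\, G(x)\big) + \rho\big(g(x), g(\bx)\big) = d\big(\by+g(\bx),\,(g+G)(x)\big) + \rho\big(g(x),g(\bx)\big).
$$

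Combining the two displays and then applying the calmness bound $\rho(g(x),g(\bx)) \le \mu\,\rho(x,\bx)$, valid for $x \in \ball_a(\bx)$, yields
$$
\rho(x,\bx) \le \kappa\, d\big(\by+g(\bx),\,(g+G)(x)\big) + \kappa\mu\,\rho(x,\bx).
$$
Since $\kappa\mu < 1$, I can move the last term to the left and divide by $1 - \kappa\mu > 0$:
$$
\rho(x,\bx) \le \frac{\kappa}{1-\kappa\mu}\, d\big(\by+g(\bx),\,(g+G)(x)\big) \qquad \text{for all } x \in \ball_a(\bx).
$$
This is exactly \eqref{sr1} for the mapping $g+G$ at $\bx$ for $\by+g(\bx)$, with constant $\kappa/(1-\kappa\mu) = (\kappa^{-1}-\mu)^{-1}$ and neighborhood $\ball_a(\bx)$, so $g+G$ is strongly subregular there; taking the infimum over admissible constants gives $\subreg(g+G;\bx\for\by+g(\bx)) \le \kappa/(1-\kappa\mu)$. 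If one wants to be careful about the domain of $g$: calmness only controls $\rho(g(x),g(\bx))$ for $x \in \ball_a(\bx)\cap\dom g$, but strong subregularity of $G$ already forces $x \in \dom G$ (otherwise $d(\by,G(x)) = +\infty$ and the bound $\rho(x,\bx)\le\kappa\cdot(+\infty)$ is vacuous), and $g$ being defined on all of $X$ here the point is moot; in the general statement one simply notes $\dom(g+G) \subset \dom g$, so every $x$ for which the right-hand side is finite lies in $\dom g$. The main "obstacle," such as it is, is just making sure the shift-invariance is invoked at the two places where $g(x) - g(\bx)$ needs to be slid past the argument of the distance function — everything else is the triangle inequality.
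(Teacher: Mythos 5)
Your proof is correct and follows essentially the same route as the paper's: chain the subregularity and calmness inequalities via shift invariance and the triangle inequality, then absorb the $\kappa\mu\,\rho(x,\bx)$ term using $\kappa\mu<1$. The only cosmetic difference is that you work with the $1$-Lipschitz distance function directly, while the paper picks an arbitrary $z\in g(x)+G(x)$ and takes the infimum at the end; your handling of the empty-value and domain caveats matches the paper's as well.
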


\begin{proof} By assumption, we have
\be\label{st}
      \rho(x,\bx) \leq \kappa d(\by, G(x)) \text{ and } \rho(g(x), g(\bx))\leq \mu\rho(x,\bx)
      \; \text{for all} x \in {\ball_a(\bx)} \cap\dom g.
\ee
Observe that $\dom(g+G)=\dom g\cap\dom G$.
Take any $x \in {\ball_a(\bx)} \cap\dom g$ and any $z \in g(x)+G(x)$ (if there is no such $z$ we have $d(\by + g(\bx), g(x)+G(x))= {+} \infty$ and there is nothing to prove). Then there exists $y\in G(x)$ such that $y=z-g(x)$ and from \eqref{st}
we get
\bas
\rho(x,\bx) & \leq & \kappa d(\by, G(x)) \leq \kappa \rho(\by,y) = \kappa \rho(\by, z-g(x))  \\
& \leq & \kappa \rho(\by, z -g(\bx))+ \kappa \rho(g
(x), g(\bx)) \leq \kappa\rho(\by, z -g(\bx)) + \kappa\mu\rho(x, \bx).
\eas
Taking into account that $\kappa\mu< 1$ and  $z$ is an arbitrary point in $g(x)+G(x)$, we obtain
$$ \rho(x,\bx) \leq \frac{\kappa}{1-\kappa\mu}d\big(\by+g(\bx), (g+G)(x)\big).$$
The proof is complete.
\end{proof}

{The above statement fails when the perturbation $g$ is represented by a (calm) set-valued mapping even for $X=Y=\reals$. Indeed, the mapping  $G(x) = \{1+x^2, 2x\}$ is strongly subregular   at $0$ for $0$. Let
$g(x) = \{-1, -x\}$; clearly $g$ has the isolated calmness property at $0$ for $0$. However, as easily seen,  the sum
$ g(x)+G(x)=\{x^2, 1-x+x^2, 2x-1, x\}$ is not strongly subregular   at $0$ for $0$.}

The following corollary specifies the result in Theorem~\ref{main} for the case when
  the (single-valued) function is approximated  by another  function.
\begin{corollary} \label{prop1}
Suppose {that} $X$ is a metric space and $Y$ is a linear metric space with shift invariant metric.
Consider $F : X \tto Y$, a point $(\bx,\by) \in \gph F$ and two functions $f : X \to Y$ and $h : X \to Y$ with
$\bx \in \dom f \cap \dom  h$.  Suppose that  $h + F$ is strongly subregular at $\bx$ for $h(\bx) + \by$,
{the difference} $f-h$ is calm at $\bx$,
and $$ \subreg( h +F; \bx \for h(\bx) + \by) \cdot \clm(f - h; \bx)  < 1.$$ Then
the mapping $f +F$ is strongly subregular at $\bx$ for $f (\bx)+ \by $ and
$$
\subreg( f +F; \bx {\for f(\bx) + \by}) \leq \frac{\subreg( h +F; \bx \for h(\bx) + \by)}{1 - \subreg( h +F; \bx \for h(\bx) + \by) \cdot \clm(f - h; \bx) }.
$$
In particular, if $\clm(f - h; \bx) = 0$, then the mapping $f +F$ is strongly subregular at $\bx$ for
$f(\bx)+ \by$ if and only if $h+F$ is strongly subregular at $\bx$ for $h(\bx)+ \by$, in which case
$$
 \subreg( f +F; \bx \for f (\bx)+ \by) = \subreg(h+F; \bx \for h(\bx)+ \by).
$$
\end{corollary}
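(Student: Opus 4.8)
The plan is to obtain this as a direct application of Theorem~\ref{main}, with the roles played as follows. Set $G := h+F$ and $g := f-h$, so that $g+G = (f-h)+(h+F) = f+F$ pointwise (note $\dom(g+G)=\dom(f-h)\cap\dom(h+F)$, which is consistent with the domain bookkeeping in the proof of Theorem~\ref{main}). By hypothesis $G=h+F$ is strongly subregular at $\bx$ for $h(\bx)+\by$, and $g=f-h$ is calm at $\bx$; moreover $(f-h)(\bx)=f(\bx)-h(\bx)$, so the reference value $\by_G + g(\bx)$ in Theorem~\ref{main}, with $\by_G := h(\bx)+\by$, equals $h(\bx)+\by+f(\bx)-h(\bx) = f(\bx)+\by$, which is exactly the claimed reference value for $f+F$.

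First I would fix any $\kappa > \subreg(h+F;\bx\for h(\bx)+\by)$ and any $\mu > \clm(f-h;\bx)$ small enough that $\kappa\mu < 1$; this is possible precisely because the product of the two moduli is $<1$ by assumption, so for $\kappa,\mu$ chosen close enough to their respective infima we still have $\kappa\mu<1$. Then there is a common radius $a>0$ such that \eqref{sr1} holds for $h+F$ at $\bx$ for $h(\bx)+\by$ with constant $\kappa$ on $\ball_a(\bx)$, and \eqref{calm} holds for $f-h$ at $\bx$ with constant $\mu$ on $\ball_a(\bx)$ — shrinking to the minimum of the two radii. Theorem~\ref{main} then gives that $f+F$ is strongly subregular at $\bx$ for $f(\bx)+\by$ with constant $\kappa/(1-\kappa\mu)$. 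Since the map $(\kappa,\mu)\mapsto \kappa/(1-\kappa\mu)$ is continuous and increasing in each variable on the region $\kappa\mu<1$, letting $\kappa\downarrow\subreg(h+F;\bx\for h(\bx)+\by)$ and $\mu\downarrow\clm(f-h;\bx)$ yields the stated bound on $\subreg(f+F;\bx\for f(\bx)+\by)$.

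For the "in particular" part, when $\clm(f-h;\bx)=0$ the bound just proved gives $\subreg(f+F;\bx\for f(\bx)+\by)\le \subreg(h+F;\bx\for h(\bx)+\by)$. For the reverse inequality (and hence also the reverse implication), I would apply the same argument with the roles of $f$ and $h$ interchanged: write $h+F = (h-f)+(f+F)$, note $\clm(h-f;\bx)=\clm(f-h;\bx)=0$, and invoke the forward direction again to get $\subreg(h+F;\bx\for h(\bx)+\by)\le\subreg(f+F;\bx\for f(\bx)+\by)$. Combining the two inequalities gives equality, and in particular $f+F$ is strongly subregular iff $h+F$ is.

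The only mild subtlety — not really an obstacle — is the passage to the infimum: one must check that the product $\kappa\mu$ can be kept below $1$ while both constants approach their infima, which is immediate from the strict inequality in the hypothesis, and that the resulting modulus bound is the infimum over all admissible neighborhoods, which follows since Theorem~\ref{main} already produces an explicit neighborhood $\ball_a(\bx)$ for each admissible pair $(\kappa,\mu)$. Everything else is bookkeeping with the substitutions $G=h+F$, $g=f-h$.
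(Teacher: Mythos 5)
Your proposal is correct and follows essentially the same route as the paper's proof: apply Theorem~\ref{main} with $G=h+F$ and $g=f-h$ for constants $\kappa,\mu$ slightly above the respective moduli with $\kappa\mu<1$, pass to the infimum for the modulus bound, and obtain the second statement by interchanging $f$ and $h$. Your extra care with the reference-value bookkeeping and the limiting argument is exactly what the paper leaves implicit.
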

\begin{proof} To show the first statement, fix any $\kappa > \subreg( h +F; \bx {\for h(\bx)+\by})$ and $\mu > \clm(f - h; \bx)$ such that $\kappa \mu < 1$. Clearly, there is $a> 0$ such that the assumptions of Theorem~\ref{main} hold for $G = h+F$ and {$g = f - h$}. Hence $f+F = g+G$ is strongly subregular at $\bx$ for $f (\bx)+ \by $ with modulus not greater than $\kappa/(1-\kappa\mu)$.
The second statement follows from the first one and the fact that  $f$ and $h$ can be interchanged.
\end{proof}

\begin{remark} \rm
When $X$ and $Y$ are Banach spaces and  $f : X \to Y$ is Fr\'echet differentiable at $\bx \in X$ then the function $x\mapsto h(x):= f(\bx) + Df(\bx)(x - \bx)$
satisfies the conditions in the second part of Corollary~\ref{prop1}. Taking $F \equiv 0$ we arrive at Proposition~\ref{propo} (iii). But we can consider the  much larger class of semidifferentiable functions. Recall that a function $f : X \to Y$ is called semidifferentiable at $\bx$, if there is a (unique) continuous and positively homogeneous function $\varphi: X \to Y$ such that the function $h:=f(\bx) + \varphi(\cdot-\bx)$ is the first-order approximation to $f$ at $\bx$, that is,  $\clm(f - h; \bx) = 0$. Every piecewise smooth function $f: \reals^n \to \reals^m$ is semidifferentiable at any interior point of its domain \cite[Proposition 2D.8]{book}. Also if  $f: \reals^n \to \reals^m$ is locally Lipschitz at $\bx$, then $f$ is semidifferentiable at $\bx$ if and only if $f$ is directionally differentiable at $\bx$ \cite[Proposition~2D.1]{book}.
\end{remark}

 \begin{remark}\rm
 Let $f: X \to Y$, with $X$ and $Y$ being normed spaces, and  $\bx \in X$ be such that there is a positively homogeneous function $\varphi: X \to Y$ which is continuous at $0$ and $\clm(f - \varphi(\cdot - \bx); \bx) < \varepsilon$ for some positive $\varepsilon$ (such a {function} $\varphi$ is called the first-order $\varepsilon$-approximation of $f$ at $\bx$ in \cite{u}). Taking $F\equiv0$ and observing that $h:=f(\bx) + \varphi(\cdot - \bx)$ is strongly subregular at $\bx$ if and only if so is $\varphi$ at $0$,  we get \cite[{Theorem 4.1}]{u}: If $\varphi$ is strongly subregular at $0$ and $ \varepsilon \subreg(\varphi;0) < 1$, then $f$ is strongly subregular at $\bx$ with modulus not greater than $\subreg(\varphi;0)/(1 - \varepsilon\subreg(\varphi;0))$.
\end{remark}

We present next a theorem regarding  perturbation stability of strong subregularity
 in an implicit function form. It is an  infinite-dimensional version of  \cite[Theorem 3I.14]{book} whose proof also works in this case with  a few minor adjustments and therefore will not be reproduced here.

\begin{theorem} Let $X$, $P$ and $Y$ be Banach spaces and let $f:P\times X\to Y$ and $F:X\tto Y$.
Consider the generalized equation $f(p,x)+F(x)\ni 0$, its solution mapping
$
   S: p\mapsto \lset x \mset f(p,x)+F(x)\ni 0 \rset,
$
 and a pair $(\bp,\bx) \in \gph S$, and suppose that $f$ is continuously Fr\'echet differentiable  on a neighborhood of
$(\bp, \bx)\in \inT\dom f$. If the mapping
$$
    h+F \quad \text{for} \quad  h= f(\bp,\bx) +D_x f(\bp,\bx)(\cdot-\bx)
$$
is strongly subregular at $\bx$ for $0$, then $S$ has the
isolated calmness property at $\bp$ for $\bx$ with
\be\label{ess}
   \clm(S;\bp\for\bx) \, \leq \,
                \subreg(h+F;\bx\for 0)\cdot \|D_p f(\bp,\bx)\|.\ee
Furthermore, when  $P$ and $Y$ are Hilbert spaces and
$D_p f(\bp, \bx)$ is surjective,
then the converse implication holds as well: the mapping $h+F$ is
strongly subregular at $\bx$ for $0$ provided that $S$
has  the isolated calmness property at $\bp$ for $\bx$.
\end{theorem}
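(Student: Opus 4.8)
The plan is to prove the two implications separately: the forward one by reducing to the perturbation result Theorem~\ref{main}, and the converse one by ``solving back'' for the parameter $p$. For the forward implication I would write $f(p,\cdot)=g_p+h$ with $g_p:=f(p,\cdot)-h=f(p,\cdot)-f(\bp,\bx)-D_xf(\bp,\bx)(\cdot-\bx)$ and treat $g_p$, for each fixed $p$ near $\bp$, as a perturbation of the strongly subregular mapping $h+F$. Continuous Fr\'echet differentiability of $f$ near $(\bp,\bx)$, via the mean value inequality along $[\bx,x]$, shows that for every $\mu>0$ there are $a>0$ and a neighborhood $Q$ of $\bp$ with $\|g_p(x)-g_p(x')\|\le\mu\|x-x'\|$ for all $x,x'\in\ball_a(\bx)$ and $p\in Q$; in particular each $g_p$ is calm at $\bx$ with constant $\mu$ and neighborhood $\ball_a(\bx)$. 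After shrinking $a$ so that $h+F$ is also strongly subregular at $\bx$ for $0$ with some $\kappa>\subreg(h+F;\bx\for0)$ on $\ball_a(\bx)$ and $\kappa\mu<1$, Theorem~\ref{main} gives that $f(p,\cdot)+F=g_p+(h+F)$ is strongly subregular at $\bx$ for $g_p(\bx)=f(p,\bx)-f(\bp,\bx)$ with constant $\kappa/(1-\kappa\mu)$ and neighborhood $\ball_a(\bx)$, for every $p\in Q$. I would then take $p\in Q$ and $x\in S(p)\cap\ball_a(\bx)$: since $0\in(f(p,\cdot)+F)(x)$, the distance from the reference value $f(p,\bx)-f(\bp,\bx)$ to this set is at most $\|f(p,\bx)-f(\bp,\bx)\|$, and strong subregularity together with the bound $\|f(p,\bx)-f(\bp,\bx)\|\le(\|D_pf(\bp,\bx)\|+\epsilon)\|p-\bp\|$ (valid near $\bp$ by continuity of $D_pf$) yields $\rho(x,\bx)\le\frac{\kappa}{1-\kappa\mu}(\|D_pf(\bp,\bx)\|+\epsilon)\|p-\bp\|$. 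Letting $\kappa\downarrow\subreg(h+F;\bx\for0)$, $\mu\downarrow0$, $\epsilon\downarrow0$ delivers the isolated calmness of $S$ and the estimate~\eqref{ess}.

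For the converse, set $A:=D_pf(\bp,\bx)$. I would first use that, since $P$ and $Y$ are Hilbert and $A$ is surjective, $A$ has a bounded linear right inverse $A^\dagger=A^*(AA^*)^{-1}$; together with continuity of $D_pf$ near $(\bp,\bx)$ this produces, by a standard contraction (parametric Lyusternik--Graves) argument, a constant $\kappa'>0$ and neighborhoods such that $p\mapsto f(p,x)$ is metrically regular at $\bp$ for $f(\bp,x)$ with modulus at most $\kappa'$, uniformly for $x$ near $\bx$. Then, given $x$ near $\bx$ and any $w\in(h+F)(x)$ (so $w-h(x)\in F(x)$), I would solve $f(p,x)=h(x)-w$ for $p$: from $h(x)-w-f(\bp,x)=-[f(\bp,x)-f(\bp,\bx)-D_xf(\bp,\bx)(x-\bx)]-w$ and continuous differentiability in $x$ one has $\|h(x)-w-f(\bp,x)\|\le\delta\|x-\bx\|+\|w\|$ with $\delta$ as small as desired, so the uniform metric regularity supplies $p$ with $f(p,x)=h(x)-w$ and $\|p-\bp\|\le\kappa'(\delta\|x-\bx\|+\|w\|)+\epsilon'$ for arbitrary $\epsilon'>0$. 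For this $p$ one has $-f(p,x)=w-h(x)\in F(x)$, i.e.\ $x\in S(p)$, so the assumed isolated calmness of $S$ at $\bp$ for $\bx$ with constant $L$ gives $\|x-\bx\|\le L\|p-\bp\|\le L\kappa'(\delta\|x-\bx\|+\|w\|)+L\epsilon'$; sending $\epsilon'\downarrow0$ and shrinking the neighborhoods so that $L\kappa'\delta\le\tfrac12$ yields $\|x-\bx\|\le2L\kappa'\|w\|$ for all such $x$ and $w$, which is exactly strong subregularity of $h+F$ at $\bx$ for $0$.

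The hard part will be the converse direction, and within it the ``solving back'' step: one must combine the \emph{uniform} (parametric) metric regularity of $p\mapsto f(p,x)$ over $x$ near $\bx$ with the first-order Taylor remainder of $f$ in the $x$-variable so that $\|x-\bx\|$ enters with an arbitrarily small coefficient, because the product $L\kappa'$ of the isolated-calmness constant and the regularity constant is not controlled a priori and only that smallness lets the term be absorbed on the left. Obtaining the uniformity of $\kappa'$ in $x$ (and not merely pointwise metric regularity of $f(\cdot,\bx)$) is precisely where surjectivity of $D_pf(\bp,\bx)$, joint $C^1$ smoothness, and the Hilbert-space structure are genuinely needed.
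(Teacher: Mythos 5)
Your proposal is correct and follows essentially the same route as the paper, which refers the first part to the direct absorption estimate of \cite[Theorem 3I.13]{book} (your packaging of that estimate through Theorem~\ref{main} is the same computation) and the converse to \cite[Lemma 2C.1 and Proposition 3I.15]{book}. In particular, your converse reproduces exactly the paper's scheme: the Hilbert-space right inverse $A^*(AA^*)^{-1}$ of $D_pf(\bp,\bx)$ yields uniform solvability in $p$ of $f(p,x)=h(x)-w$ with the estimate $\|p-\bp\|\lesssim \kappa'(\delta\|x-\bx\|+\|w\|)$, after which isolated calmness of $S$ and absorption of the small term give strong subregularity of $h+F$.
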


\begin{proof} The proof of the first part   of the theorem which gives the estimate
\eqref{ess} is identical with the proof of \cite[Theorem 3I.13]{book}
with general Banach space norms replacing the Euclidean ones.
Consider the mapping
$$\Psi:(x,y) \mapsto \{p  \mid f(p,x) - h(x) + y=0\} \text{for} (x, y) \in X\times Y.$$
Let $A= D_p f(\bp, \bx):P \to Y$. Since $P$ and $Y$ are Hilbert spaces,
the mapping $AA^*: Y \to Y$, where $A^*$ is the adjoint to $A$,  has a linear bounded inverse. Let $c= \|A^*(AA^*)^{-1}\|$. The further proof is identical to the proof of  \cite[Lemma 2C.1]{book}. To finish, use the argument in the proof of \cite[Proposition 3I.15]{book}  replacing the Euclidean norms  by the norms of $X$, $Y$ and $P$ spaces, respectively.
\end{proof}

Generalizations of the first part of the above statement for parametric generalized equations with a nonsmooth single-valued part can be found in \cite[Section 5]{u} (cf. Theorem~\ref{thmSSSR2} in the next section). Combining Corollary~\ref{prop1} and Theorem~\ref{poly} we obtain the following result:

\begin{theorem}\label{2} Let $X$ and $Y$ be Banach spaces.
Consider a function $f:X\to Y$ which is {Fr\'echet} differentiable at a point $\bx \in X$ and a set-valued mapping $F:X\tto Y$ with
$(\bx, \by) \in \gph F$. Then the mapping $f+F$ is strongly {subregular} at $\bx$ for $f(\bx)+\by$ if and only if
the mapping {$H:= f(\bx) + Df(\bx)(\cdot-\bx) +F$} has the same property. In the case when $X=\reals^n$, $Y=\reals^m$ and the graph of $F$  is the union of finitely many  polyhedral convex sets, the mapping $H$, and hence
$f+F$, is strongly {subregular} at $\bx$ for $f(\bx)+\by$ if and only if $\bx$ is an isolated point of
 $H^{-1}(f(\bx) + \by)$.
\end{theorem}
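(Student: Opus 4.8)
The plan is to derive the first equivalence from the ``zero calmness modulus'' part of Corollary~\ref{prop1} and the finite-dimensional statement from Theorem~\ref{poly}. First I would set $h(x):=f(\bx)+Df(\bx)(x-\bx)$, an affine function on all of $X$; then $h(\bx)=f(\bx)$, so $f(\bx)+\by\in h(\bx)+F(\bx)=H(\bx)$ and $(\bx,f(\bx)+\by)\in\gph H$. Fr\'echet differentiability of $f$ at $\bx$ says exactly that $\|f(x)-f(\bx)-Df(\bx)(x-\bx)\|=o(\|x-\bx\|)$ as $x\to\bx$, i.e. the difference $f-h$ is calm at $\bx$ with $\clm(f-h;\bx)=0$; moreover $\bx\in\dom f\cap\dom h$. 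Hence the hypotheses of the ``in particular'' clause of Corollary~\ref{prop1} (applied with this $h$, the given $F$, and $f$) are satisfied, and it yields immediately that $f+F$ is strongly subregular at $\bx$ for $f(\bx)+\by$ if and only if $h+F=H$ is strongly subregular at $\bx$ for $h(\bx)+\by=f(\bx)+\by$ (with equal moduli). This is the first assertion.

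For the second assertion, suppose $X=\reals^n$, $Y=\reals^m$, and $\gph F=\bigcup_{i=1}^{k}C_i$ with each $C_i\subset\reals^n\times\reals^m$ polyhedral convex. I would note that $y\in H(x)$ iff $y-h(x)\in F(x)$ iff $(x,y-h(x))\in\gph F$, so $\gph H=L(\gph F)$, where $L:\reals^n\times\reals^m\to\reals^n\times\reals^m$, $L(x,v)=(x,v+h(x))$, is an affine bijection with affine inverse $(x,y)\mapsto(x,y-h(x))$. Since an affine bijection carries a polyhedral convex set to a polyhedral convex set (the preimage of finitely many closed half-spaces under an affine map is again finitely many closed half-spaces) and commutes with finite unions, $\gph H=\bigcup_{i=1}^{k}L(C_i)$ is again a finite union of polyhedral convex sets. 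Thus $H:\reals^n\tto\reals^m$ meets the hypothesis of Theorem~\ref{poly}, which gives that $H$ is strongly subregular at $\bx$ for $f(\bx)+\by$ if and only if $\bx$ is an isolated point of $H^{-1}(f(\bx)+\by)$. Combined with the first assertion, this completes the proof.

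The whole argument is bookkeeping layered on the two cited results; the only point requiring a word of care is that the affine change of coordinates $L$ preserves the class of finite unions of polyhedral convex sets, which I expect to be the main (though still routine) obstacle. Note that no local closedness of $\gph F$ or $\gph H$ is used anywhere, in keeping with the definition of strong subregularity adopted in the paper.
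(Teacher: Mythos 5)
Your proof is correct and follows exactly the route the paper intends: the first equivalence is the ``$\clm(f-h;\bx)=0$'' case of Corollary~\ref{prop1} with $h$ the linearization, and the second assertion is Theorem~\ref{poly} applied to $H$, whose graph (as you verify via the affine bijection $(x,v)\mapsto(x,v+h(x))$) remains a finite union of polyhedral convex sets. The paper states the result by simply ``combining'' these two facts, so your write-up just makes the same argument explicit.
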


{Theorem~\ref{2} yields the statement (iii) in Proposition~\ref{propo} but note that the latter 
imposes the additional condition that the range of $Df(\bx)$ is closed. Indeed, $f$ is strongly subregular
at $\bx$ if and only if the linearization $f(\bx)+Df(\bx)(\cdot - \bx)$ has the same property.
The problem is that an injective  linear and bounded mapping is not necessarily   strongly subregular.
Let's have a closer look   at that.}

{ By linearity, $A\in {\cal L}(X,Y)$ is strongly subregular everywhere if and only if $A$ is strongly subregular at $0$ for $0$.  From \eqref{sr1} we
obtain  that $A$ is strongly subregular at $0$ for $0$ if and only if  
 \begin{equation} \label{srlinear}
    \liminf_{x \to 0, x \neq 0} \frac{\|Ax\|}{\|x\|} = \inf_{\|h\| = 1} \|Ah\| > 0.
 \end{equation}
If the dimension of $X$ is finite, then \eqref{srlinear} holds if and only if $A^{-1}(0) = \{0\}$, that is, $A$ is injective. This is not true in general as} {Example~\ref{exLinear} shows. However, if an operator $A\in {\cal L}(X,Y)$ has a closed range  then the Banach open mapping theorem yields that there is  a constant $\kappa > 0$ such that for any $y \in \rge A $ there is $x \in X$ such that $y=Ax$ and $\|x\| \leq \kappa \|y\|$. Then the injectivity of $A$ implies that such a point $x$ is unique and therefore   
$$
 \|x\|  \leq \kappa \|Ax\| \quad \mbox{for any} \quad x \in X. 
$$
Consequently, any bounded linear operator which is injective and has a closed range is strongly subregular at $0$ for $0$, and hence strongly subregular everywhere.}

\ms

\begin{example} \label{exLinear} \rm { {Let  $X=\ell_\infty$, the space of (infinite) sequences  $\{x_k\}$  in $\reals$ equipped with the norm $\|\{x_k\}\|_\infty = \sup_{k \in \N}|x_k|$, and $Y=\ell_2$, the space of (infinite) sequences  $\{x_k\}$  in $\reals$ equipped with the norm $\|\{x_k\}\|_2 = \sqrt{\sum\limits_{k =1}^{+\infty} (x_k)^2}$.} Define the operator $A$  by 
$$
 A(\{x_k\}) = \{k^{-1}x_k\}_{k=1}^{+\infty}, \quad \{x_k\} \in \ell_\infty.
$$
Then $A \in \mathcal{L}(\ell_\infty, \ell_2)$ with $\|A\| = \frac{\pi}{\sqrt{6}}$. {Indeed, letting} $x_k:=1$, $k \in \N$, we get $\|\{x_k\}\|_{\infty}=1$ and $\|A(\{x_k\})\|_{2}^2 = \|\{1/k\}\|_2^2 = \sum\limits_{k =1}^{+\infty} (1/k)^2=\pi^2/6$. On the other hand, for any $\|\{x_k\}\|_{\infty}\leq 1$ and $\{y_k\}:=A(\{x_k\})$  we  have $ 0 \leq (y_k)^2 =k^{-2} (x_k)^2 \leq k^{-2}$ for any $k \in \N$, which means that $\|\{y_k\}\|_{2}^2 \leq \pi^2/6$. The mapping $A$ is injective, but not strongly subregular at $0$ for $0$. Indeed, suppose on the contrary  that there are
$\kappa > 0$ and $a > 0$ such that 
$$
 \|\{x_k\}\|_\infty  \leq \kappa \|A(\{x_k\})\|_2       \; \text{for all} \{x_k\} \in a \ball_{\ell_\infty}.
$$
Pick any $n \in \N$ such that $n > \kappa$ and then set $x_k = a$ if $k=n$ and $x_k = 0$ otherwise. Then $\|\{x_k\}\|_\infty =a$ and $\|A(\{x_k\})\|_2 = a/n$. Thus 
$$
 a \leq \kappa \frac{a}{n} < a,
$$
a contradiction.} {Given $n \in \N$,  let $x_{k,n} = 1$ if $k=n$ and $x_{k,n} = 0$ otherwise. Then $x_n:=\{x_{k,n}\} \in \ell_\infty$ is such that $\|x_n\|_\infty =1$ and $\|Ax_n\|_2 = 1/n$. Hence $\inf_{\|\{x_k\}\|_\infty = 1} \|A(\{x_k\})\|_2 = 0$, that is, \eqref{srlinear} fails.}
{{The range of $A$ is not closed.} Indeed, given $n \in \N$,  let $y_{k,n} = k^{-2/3}$ if $k \leq n$ and $y_{k,n} = 0$ otherwise; then $y_n:=\{y_{k,n}\} \in \ell_2$. For each $n\in \N$,  if we set $x_{k,n} = k^{1/3}$ if $k\leq n$ and $x_{k,n} = 0$ otherwise, then $x_n:=\{x_{k,n}\} \in \ell_\infty$ and $Ax_n = y_n$. Then $y:=\lim_{n \to +\infty}y_n = \{k^{-2/3} \} \in \ell_2$ but $x = A^{-1}y=\{k^{1/3}\} \notin \ell_\infty$.
}
\end{example}

\section{Set-valued derivative-type approximations}\label{svap}

In this section we continue the analysis started in the preceding section of  mappings of the form $f+F$, where now $f$ is a function which is  calm at the reference point but not {necessarily} differentiable there, and $F$ is a  set-valued mapping.
We will now approximate the {possibly nonsmooth} function $f$ around the reference point  by a set  $\mathcal{A}$  in  ${\mathcal L}(X,Y)$. {This approach goes back to \cite{io} and the concept of a prederivative which is generated by a set of linear operators.}

\begin{theorem} \label{thmSSSR}
Let $X$ and $Y$ be Banach spaces and
consider a function  $f:X\to Y$, a  mapping  $F:X \tto Y$ and a point
 $(\bx,\by) \in X \times Y$ such that $\by \in f(\bx) + F(\bx)$. Suppose that  there exist a subset $\mathcal{A}$ of ${\mathcal L}(X,Y)$ and a constant  $c> 0$ such that:
 \paritem{(i)} there is
a constant {$r>0$} such that for every $u\in \ball_r(\bx)$ one can find $A \in \mathcal{A}$ satisfying
\be\label{c}
  \|f(u) - f(\bx) - A(u - \bx)\| \leq c \|u - \bx\|;
\ee
 \paritem{(ii)} for every  $A\in \mathcal{A}$  the mapping
  \be\label{HA}  X \ni x \mapsto H_A(x):=f(\bx) + A(x-\bx) + F(x) \ee
is strongly subregular at $\bx$ for $\by$  and
\be\label{m}
(c + \chi (\mathcal{A})) \cdot  m  < 1,
\ee
where $$m: =\sup\limits_{A \in \mathcal{A}} \subreg(H_A; \bx \for {\by}).$$
 Then  $f+F$ is strongly subregular at $\bx$ for $\by$; moreover
 \be\label{smse} \subreg(f+F; \bx \for {\by}) \leq \frac{m}{1- (c + \chi (\mathcal{A})) \cdot m}.\ee
\end{theorem}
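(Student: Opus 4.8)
The plan is to combine the perturbation argument behind Theorem~\ref{main} with a finite approximation of the family $\mathcal{A}$ supplied by the measure of non-compactness. Write $\chi:=\chi(\mathcal{A})$ and note that, by \eqref{m}, we may fix constants $\kappa>m$ and $\delta>\chi$, close enough to $m$ and $\chi$ respectively, so that $\kappa(c+\delta)<1$. Since $\delta>\chi$, the definition of $\chi(\mathcal{A})$ furnishes a finite set $\{A_1,\dots,A_N\}\subset\mathcal{A}$ with $\mathcal{A}\subset\bigcup_{i=1}^N\ball_\delta(A_i)$. For each $i$ we have $\subreg(H_{A_i};\bx\for\by)\le m<\kappa$, so by the one-neighborhood form \eqref{sr1} of strong subregularity there is $a_i>0$ such that $\|x-\bx\|\le\kappa\,d(\by,H_{A_i}(x))$ for all $x\in\ball_{a_i}(\bx)$; note $\by\in H_{A_i}(\bx)$ since $\by\in f(\bx)+F(\bx)$. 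Put $a:=\min\{r,a_1,\dots,a_N\}>0$.

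Then I would run the perturbation estimate with the finitely many $A_i$ playing the role of the linear part. Fix $x\in\ball_a(\bx)$, $x\ne\bx$, and an arbitrary $z\in f(x)+F(x)$ (if there is no such $z$ then $d(\by,(f+F)(x))=+\infty$ and nothing is to be proved). Since $\ball_a(\bx)\subset\ball_r(\bx)$, hypothesis (i) gives $A\in\mathcal{A}$ with $\|f(x)-f(\bx)-A(x-\bx)\|\le c\|x-\bx\|$, and the covering gives an index $i$ with $\|A-A_i\|\le\delta$. Because $z-f(x)\in F(x)$, the point $w:=f(\bx)+A_i(x-\bx)+(z-f(x))$ belongs to $H_{A_i}(x)$, so, using $x\in\ball_{a_i}(\bx)$,
$$
\|x-\bx\|\ \le\ \kappa\,d(\by,H_{A_i}(x))\ \le\ \kappa\,\|\by-w\|.
$$
Since $\by-w=(\by-z)+\bigl(f(x)-f(\bx)-A(x-\bx)\bigr)+(A-A_i)(x-\bx)$, hypothesis (i) and $\|A-A_i\|\le\delta$ yield $\|\by-w\|\le\|\by-z\|+(c+\delta)\|x-\bx\|$. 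Combining the two displays, $\|x-\bx\|\le\kappa\|\by-z\|+\kappa(c+\delta)\|x-\bx\|$, and since $\kappa(c+\delta)<1$ the last term can be absorbed to give $\|x-\bx\|\le\frac{\kappa}{1-\kappa(c+\delta)}\|\by-z\|$. Taking the infimum over $z\in(f+F)(x)$ shows that $f+F$ is strongly subregular at $\bx$ for $\by$ on $\ball_a(\bx)$ with constant $\kappa/(1-\kappa(c+\delta))$; finally letting $\kappa\downarrow m$ and $\delta\downarrow\chi$ gives \eqref{smse}.

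The only real obstacle is that the neighborhoods on which the maps $H_A$ are strongly subregular are a priori not uniform in $A\in\mathcal{A}$, which is why Theorem~\ref{main} cannot be invoked directly with the operator $A$ produced by (i). Replacing $\mathcal{A}$ by a finite $\delta$-net restores a common positive radius $a=\min\{r,a_1,\dots,a_N\}$, at the price of enlarging the approximation constant from $c$ to $c+\delta$ and, in the limit, to $c+\chi(\mathcal{A})$ --- exactly the quantity occurring in \eqref{m}. The remaining points (the cases $x=\bx$ and $(f+F)(x)=\emptyset$, the membership $\by\in H_{A_i}(\bx)$, and the continuity and monotonicity of $(\kappa,\delta)\mapsto\kappa/(1-\kappa(c+\delta))$ used in the passage to the infimum) are routine.
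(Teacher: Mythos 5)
Your proof is correct and follows essentially the same route as the paper's: a finite $\delta$-net supplied by the measure of non-compactness restores a common subregularity radius $a=\min\{r,a_1,\dots,a_N\}$, and the error terms are then absorbed under the smallness condition \eqref{m} to yield exactly the bound \eqref{smse}. The only difference is presentational: the paper first establishes a uniform estimate for all $H_A$, $A\in\mathcal{A}$, by applying Theorem~\ref{main} to the perturbation $H_{\tilde A}+A'(\cdot-\bx)$ and then absorbs the approximation error $c\|x-\bx\|$ in a second step, whereas you inline both absorptions into a single estimate with combined error $(c+\delta)\|x-\bx\|$, which is just an unfolding of the same argument.
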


\begin{proof} Note that from \eqref{c} we have $\bx \in \inT\dom f$ and also \eqref{m} yields  that $m < {+} \infty$.
Choose  $\kappa > m$ and $\gamma > 0$ such that
$$
 (c + \chi (\mathcal{A}) + \gamma) \kappa < 1.
$$
Let $r$ be as in condition (i).  We will  show first   that there exists  $a \in (0,r]$  such that
 \begin{equation}   \| x - \bx\| \leq \frac{\kappa}{1 - \kappa (\chi (\mathcal{A}) + \gamma)} d\big({\by}, H_A(x)\big) \quad \mbox{whenever} \quad x \in \ball_a(\bx)
   \quad \mbox{and} \quad A\in \mathcal{A}.
 \label{bbbs}
 \end{equation}
By the definition of $\chi (\mathcal{A})$, there is a finite set ${\mathcal{B}} \subset  \mathcal{A}$ such that
\begin{equation} \label{AsAf}
 \mathcal{A}  \subset {\mathcal{B}} + (\chi (\mathcal{A}) + \gamma)  \ball.
\end{equation}
Pick any $\tilde{A} \in {\mathcal{B}}$.  Then there exists $\alpha_{\tilde{A}} > 0$ such that
$$
 \| x - \bx\| \leq \kappa d\big({\by}, {H}_{\tilde{A}}(x)\big) \quad \mbox{whenever} \quad x \in \ball_{\alpha_{\tilde{A}}}(\bx ).
$$
Let $A' \in (\chi (\mathcal{A}) + \gamma) \ball$.   Since
$
 {H}_{\tilde{A}+A'} = {H}_{\tilde{A}} + A'(\cdot - \bar{x})
$, Theorem~\ref{main} implies that
$$
  \| x - \bx\| \leq  \frac{\kappa}{1 - \kappa (\chi (\mathcal{A}) + \gamma)} d\big({\by}, {H}_{\tilde{A} + A'}(x)\big) \text{ for every}   x \in  \ball_{\alpha_{\tilde{A}}}(\bx ).
$$
Thus, for any $\tilde{A} \in {\mathcal{B}}$ there is $\alpha_{\tilde{A}} > 0$ such that for each $A' \in (\chi (\mathcal{A}) + \gamma)  \ball$ the above inequality holds.  Let $a = \min \left\{ r, \min_{\tilde{A} \in {\mathcal{B}}} \alpha_{\tilde{A}}\right\}$.
 Taking into account \eqref{AsAf}, we obtain \eqref{bbbs}.

Choose any $x \in  \ball_a(\bx)$, then use (i) to find $A \in \mathcal{A}$ such that  \eqref{c} is satisfied.
Then  \eqref{c} along with
 \eqref{bbbs} gives us
\begin{eqnarray*}
  \| x - \bx\| &\leq & \frac{\kappa}{1 -  \kappa(\chi (\mathcal{A}) + \gamma)} d\big({\by}, {H}_{{A}}(x)\big) = \frac{\kappa}{1 -  \kappa(\chi (\mathcal{A}) + \gamma)} d\big({\by}-f(\bx) - A(x-\bx), F(x)\big)  \\
    &\leq  & \frac{\kappa}{1 -  \kappa(\chi (\mathcal{A}) + \gamma)}  \left(d\big({\by}-f(x), F(x)\big) + \| f(x) - f(\bx) - A(x - \bx)\| \right) \\
     &\leq  &  \frac{\kappa}{1 -  \kappa(\chi (\mathcal{A}) + \gamma)}  d\big({\by}, f(x)+ F(x)\big) + \frac{\kappa c}{1 -  \kappa(\chi (\mathcal{A}) + \gamma)}   \|x - \bx\|.
\end{eqnarray*}
Since $(c + \chi (\mathcal{A}) + \gamma) \kappa < 1$, we obtain
$$
  \| x - \bx\| \leq  \frac{\kappa}{1 -  \kappa(c + \chi (\mathcal{A}) + \gamma)} d\big({\by}, (f+ F)(x)\big).
$$
Thus, $f+F$ is strongly subregular at $\bx$ for ${\by}$. Since $\kappa > m$ and $\gamma > 0$ can be arbitrarily close  to $m$ and $0$, respectively, this yields \eqref{smse}.
\end{proof}

Let  $f: \reals^n \to {\reals^m}$ be  Lipschitz continuous around $\bx$. Bouligand's limiting Jacobian, denoted by  $\partial_B f(\bx)$, is defined as  the set of all matrices obtained as limits of the usual Jacobians $ \nabla f(x_k)$ for sequences $x_k \to \bx$ such that $f$ is differentiable at $x_k$. The convex hull of $\partial_B f(\bx)$ is Clarke's generalized Jacobian of $f$ at $\bx$  denoted by  $\partial_C f (\bx)$. If in Theorem~\ref{thmSSSR} we choose
$X=\reals^n$, $Y=\reals^m$,
and   $\mathcal{A}:=\partial_C f (\bx)$, then, as well {known}, see \cite[Proposition 6F.3]{book}, for every $c > 0$  there exists $r>0$ such that \eqref{c} is satisfied; that is, assumption (i) holds with an arbitrarily small $c>0$. In that case we also  have  $\chi(\partial_C f (\bx))=0$, and then Theorem~\ref{thmSSSR}
gives us   the following:
\begin{corollary} \label{c1}
Let $(\bx,\by) \in \reals^n \times {\mathbb{R}^m}$, $f:\mathbb{R}^n \to {\mathbb{R}^m}$ and  $F:\mathbb{R}^n {\tto} {\mathbb{R}^m}$ be such that $\by \in f(\bx) + F(\bx)$.
Suppose that $f$ is  Lipschitz continuous around $\bx$ and
for every  $A\in \partial_C f (\bx)$  the mapping  $H_A$ defined in \eqref{HA}
is strongly subregular at $\bx$ for $\by$.
 Then  $f+F$ is strongly subregular at $\bx$ for $\by$; moreover,
 $$\subreg (f+F; \bx \for \by) \leq \sup\limits_{A \in \partial_C f (\bx)} \subreg (H_A; \bx \for \by).$$
\end{corollary}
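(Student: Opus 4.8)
The plan is to obtain Corollary~\ref{c1} as a specialization of Theorem~\ref{thmSSSR} with $X=\reals^n$, $Y=\reals^m$ and $\mathcal{A}=\partial_C f(\bx)$, so the work reduces to checking that the three hypotheses of that theorem are in force here. Two of them are already disposed of in the discussion preceding the corollary: assumption (i) holds with an arbitrarily small constant $c>0$ by \cite[Proposition 6F.3]{book}, and $\chi(\partial_C f(\bx))=0$ because Clarke's generalized Jacobian of a locally Lipschitz function is a nonempty compact subset of the finite-dimensional space $\mathcal{L}(\reals^n,\reals^m)$, and a compact set has vanishing measure of non-compactness. What remains — and what I expect to be the one step needing genuine work — is to verify that the number $m:=\sup_{A\in\partial_C f(\bx)}\subreg(H_A;\bx\for\by)$ is finite, which is precisely what makes the key inequality \eqref{m} attainable by shrinking $c$.

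To prove $m<+\infty$ I would combine a local perturbation estimate with compactness. Fix $A_0\in\partial_C f(\bx)$ and choose $\kappa_0>\subreg(H_{A_0};\bx\for\by)$; then $H_{A_0}$ is strongly subregular at $\bx$ for $\by$ with constant $\kappa_0$ and some ball $\ball_{a_0}(\bx)$. For any $A\in\mathcal{L}(\reals^n,\reals^m)$ one has $H_A=H_{A_0}+g_A$ with $g_A:=(A-A_0)(\cdot-\bx)$, where $g_A$ is calm at $\bx$ on every ball with constant $\|A-A_0\|$ and satisfies $g_A(\bx)=0$; hence, as soon as $\|A-A_0\|<1/(2\kappa_0)$, Theorem~\ref{main} (applied with $G=H_{A_0}$, $g=g_A$ and the neighborhood $\ball_{a_0}(\bx)$) yields $\subreg(H_A;\bx\for\by)\leq\kappa_0/(1-\kappa_0\|A-A_0\|)\leq 2\kappa_0$. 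Thus $A\mapsto\subreg(H_A;\bx\for\by)$ is bounded on a neighborhood of each point of $\partial_C f(\bx)$; covering this compact set by finitely many such neighborhoods, with corresponding bounds $2\kappa_1,\dots,2\kappa_N$, gives $m\leq 2\max_{1\le i\le N}\kappa_i<+\infty$.

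With $m$ finite, fix any $c>0$ small enough that $cm<1$, and pick $r>0$ so that assumption (i) of Theorem~\ref{thmSSSR} holds for this $c$. Then assumption (ii) holds as well, since by hypothesis each $H_A$ is strongly subregular at $\bx$ for $\by$ and $(c+\chi(\partial_C f(\bx)))\,m=cm<1$. Theorem~\ref{thmSSSR} then gives that $f+F$ is strongly subregular at $\bx$ for $\by$ with $\subreg(f+F;\bx\for\by)\leq m/(1-cm)$, and letting $c\downarrow 0$ produces the asserted estimate $\subreg(f+F;\bx\for\by)\leq m=\sup_{A\in\partial_C f(\bx)}\subreg(H_A;\bx\for\by)$. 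The only technical point to watch is the requirement in Theorem~\ref{main} that $G$ and $g$ share a common neighborhood; this causes no trouble because $g_A$ is globally Lipschitz, hence calm on every ball, so one simply reuses the ball on which $H_{A_0}$ is strongly subregular.
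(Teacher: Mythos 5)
Your proposal is correct and takes the paper's own route: the authors obtain Corollary~\ref{c1} precisely by specializing Theorem~\ref{thmSSSR} to $\mathcal{A}=\partial_C f(\bx)$, citing \cite[Proposition 6F.3]{book} for assumption (i) with arbitrarily small $c>0$ and noting $\chi(\partial_C f(\bx))=0$. The one step the paper leaves implicit---finiteness of $m=\sup_{A\in\partial_C f(\bx)}\subreg(H_A;\bx\for\by)$, which is needed so that \eqref{m} can be met by shrinking $c$---is exactly what your compactness argument via Theorem~\ref{main} supplies, and it agrees with the reasoning the authors themselves invoke later (compactness of $\partial_C f(\bx)$, ``cf.\ the proof of \eqref{bbbs}'') in the semismooth Newton section.
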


As an application of the above corollary, consider the inequality
\be\label{inequi}  f(x) \leq 0,\ee
where $f:\reals^n \to \reals^m$ is a Lipschitz continuous function around some $\bx \in \reals^n$.
Inequalities in $\R^m$ are understood componentwise.
Then, by combining Corollary~\ref{c1}  with Theorem~\ref{poly}, we obtain

\begin{corollary}\label{c11} In the context of the inequality system \eqref{inequi}, suppose that for every $A \in \partial_C f (\bx)$,
{the point}
$\bx$ is
the only solution of the inequality
  $$f(\bx) + A(x-\bx) \leq 0.$$
Then the mapping $f+\reals_+^m$ is strongly subregular at $\bx$ for $0$.
\end{corollary}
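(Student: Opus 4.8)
The plan is to recast the inequality system \eqref{inequi} as a generalized equation and then invoke Corollary~\ref{c1}. Write $F\equiv\reals_+^m$ for the constant set-valued mapping. Then $f(x)\le 0$ is equivalent to $0\in f(x)+\reals_+^m=(f+F)(x)$, so that strong subregularity of $f+\reals_+^m$ at $\bx$ for $0$ is precisely the assertion to be proved. First I would observe that applying the hypothesis with $x=\bx$ forces $f(\bx)\le 0$, hence $0\in(f+F)(\bx)$ and $(\bx,0)$ lies in the graph of $f+F$; together with the standing assumption that $f$ is Lipschitz continuous around $\bx$, this places us in the setting of Corollary~\ref{c1} with $\by=0$.

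The only thing to check before applying Corollary~\ref{c1} is that, for every $A\in\partial_C f(\bx)$, the linearized mapping $H_A(x)=f(\bx)+A(x-\bx)+\reals_+^m$ from \eqref{HA} is strongly subregular at $\bx$ for $0$. Here I would note that
$$\gph H_A=\bigl\{(x,y)\in\reals^n\times\reals^m \mid A(x-\bx)-y\le -f(\bx)\bigr\}$$
is a polyhedral convex set, hence in particular a union of finitely many polyhedral convex sets, so Theorem~\ref{poly} applies to $H_A$: it is strongly subregular at $\bx$ for $0$ if and only if $\bx$ is an isolated point of $H_A^{-1}(0)$. But $H_A^{-1}(0)=\{x\mid f(\bx)+A(x-\bx)\le 0\}$, which by hypothesis equals $\{\bx\}$; thus $\bx$ is trivially an isolated point of $H_A^{-1}(0)$, and $H_A$ is strongly subregular at $\bx$ for $0$.

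With this verified, Corollary~\ref{c1} immediately gives that $f+\reals_+^m$ is strongly subregular at $\bx$ for $0$, together with the quantitative bound
$$\subreg\bigl(f+\reals_+^m;\bx\for 0\bigr)\le\sup_{A\in\partial_C f(\bx)}\subreg\bigl(H_A;\bx\for 0\bigr).$$
There is no real obstacle in this argument: it is a straightforward chaining of Corollary~\ref{c1}, which handles the passage from $f$ to its Clarke-Jacobian linearizations, with Theorem~\ref{poly}, which handles the resulting polyhedral problems. The only point that requires a moment's care is recognizing that the graph of each $H_A$ is polyhedral convex, so that Theorem~\ref{poly} is applicable, and that the stated hypothesis on the solution set of $f(\bx)+A(x-\bx)\le 0$ is exactly the assertion that $H_A^{-1}(0)$ is a singleton.
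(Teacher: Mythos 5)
Your argument is correct and is exactly the paper's route: the corollary is obtained there by combining Corollary~\ref{c1} (Clarke-Jacobian linearizations) with Theorem~\ref{poly} (polyhedral graphs), which is precisely what you do, including the observation that $H_A^{-1}(0)=\{x\mid f(\bx)+A(x-\bx)\le 0\}=\{\bx\}$ makes $\bx$ an isolated point. Nothing is missing.
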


When $F$ is the zero mapping, from Corollary~\ref{c1}  we obtain an {analogue} of Clarke's inverse function theorem, which seems to be new:
\begin{theorem} Consider a function $f:\reals^n\to \reals^m$ which is Lipschitz continuous around
{$\bx \in \reals^n$}.
If all {matrices in}  the generalized Jacobian $\partial_C f (\bx)$ {have rank $n$} (which is only possible if {$n\le m$}),
then $f$ is strongly subregular at $\bx$.
\end{theorem}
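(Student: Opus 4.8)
The plan is to obtain the statement as the special case $F\equiv 0$ of Corollary~\ref{c1}. First, recall that local Lipschitz continuity of $f$ around $\bx$ makes $\partial_C f(\bx)$ a nonempty, compact, convex set of $m\times n$ matrices, so requiring every $A\in\partial_C f(\bx)$ to have rank $n$ is a nonvacuous hypothesis; since a linear map $\reals^n\to\reals^m$ can have rank $n$ only when $n\le m$, this also accounts for the parenthetical remark in the statement.

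The key step is to identify, for $A\in\partial_C f(\bx)$, when the mapping $H_A$ of \eqref{HA} with $F\equiv 0$ --- that is, the affine function $x\mapsto f(\bx)+A(x-\bx)$ --- is strongly subregular at $\bx$ for $f(\bx)$. Here $d\big(f(\bx),H_A(x)\big)=\|A(x-\bx)\|$, so by the definition \eqref{sr1}, and writing $h=x-\bx$ and using that $A$ is linear (hence positively homogeneous), $H_A$ has this property exactly when condition \eqref{srlinear} holds for $A$. By the discussion following \eqref{srlinear}, in the finite-dimensional setting \eqref{srlinear} is equivalent to injectivity of $A$, i.e. to $\rank A=n$ (since $\dom A=\reals^n$); alternatively one may invoke Proposition~\ref{propo}(iii), noting that $\rge A$ is automatically closed in finite dimensions. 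Thus the rank hypothesis of the theorem says precisely that $H_A$ is strongly subregular at $\bx$ for $f(\bx)$ for every $A\in\partial_C f(\bx)$, which is the main assumption of Corollary~\ref{c1}.

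Finally, I would check that Corollary~\ref{c1} is applicable with $\by:=f(\bx)$ and $F\equiv 0$: the approximation condition needed in Theorem~\ref{thmSSSR} (assumption (i), with an arbitrarily small $c>0$) holds automatically for $\mathcal{A}=\partial_C f(\bx)$ by \cite[Proposition~6F.3]{book}, and $\chi(\partial_C f(\bx))=0$ since this set is compact. Corollary~\ref{c1} then yields that $f=f+0$ is strongly subregular at $\bx$ for $f(\bx)+0=f(\bx)$, i.e. $f$ is strongly subregular at $\bx$, together with the bound $\subreg(f;\bx)\le\sup_{A\in\partial_C f(\bx)}\subreg(H_A;\bx\for f(\bx))$, which is finite by compactness of $\partial_C f(\bx)$.

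The only delicate point --- and it is a minor one --- is the reduction of strong subregularity of the affine map $H_A$ to injectivity of its linear part $A$, carried out via \eqref{srlinear} and the compactness of the unit sphere in $\reals^n$ already exploited in the text following \eqref{srlinear}; everything else is bookkeeping with the quoted corollaries, so I do not anticipate a genuine obstacle.
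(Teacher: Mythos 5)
Your proposal is correct and follows essentially the same route as the paper: the paper obtains this theorem directly as the case $F\equiv 0$ of Corollary~\ref{c1}, observing that rank $n$ of each $A\in\partial_C f(\bx)$ means each affine map $H_A$ is strongly subregular at $\bx$ (injectivity in finite dimensions), with assumption (i) of Theorem~\ref{thmSSSR} supplied by \cite[Proposition 6F.3]{book} and $\chi(\partial_C f(\bx))=0$ by compactness. Your added remarks on \eqref{srlinear} and the finiteness of the supremum of the subregularity moduli via compactness are consistent with the paper's reasoning (cf.\ the argument around \eqref{bbbs}).
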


In a different direction,  Theorem~\ref{thmSSSR}  may be extended in the following way:

\begin{theorem}\label{thmSSSR1}
Let $X$ and $Y$ be Banach spaces and
consider a function  $f:X\to Y$, a set-valued mapping  $F:X \tto Y$ and a point
 $(\bx,\by) \in X \times Y$ such that $\by \in f(\bx) + F(\bx)$. Suppose that there exist a mapping $\mathcal{H}: X \tto \mathcal{L}(X,Y)$ and a constant  $c> 0$ such that
 \paritem{(i)} there  is
a constant  $r > 0$ along with a selection $h$ for $\mathcal{H}$ such that
\begin{equation} \label{wsemi}
\|f(u)-f(\bx) - h(u)(u-\bx)\|\leq c \|u-\bx\|  \quad \mbox{whenever} \quad u \in \ball_r(\bx);
\end{equation}
 \paritem{(ii)} the assumption {\rm (ii)}
in Theorem~\ref{thmSSSR} holds with $\mathcal{A}$ replaced by $\mathcal{H}(\bx)$;
\paritem{(iii)} { for any $\eps>0$ there exists $\delta>0$ such that $\mathcal{H}(x) \subset \mathcal{H}(\bx) + \eps\ball$  whenever  $x \in \ball_\delta(\bx)$.}\\
 Then  $f+F$ is strongly subregular at $\bx$ for $\by$ {with modulus satisfying \eqref{smse} where $\mathcal{A}$ is replaced by $\mathcal{H}(\bx)$. }
\end{theorem}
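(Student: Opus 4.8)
The plan is to reduce the statement to Theorem~\ref{thmSSSR} applied with the \emph{fixed} set $\mathcal{A} := \mathcal{H}(\bx)$. By hypothesis (ii), assumption (ii) of Theorem~\ref{thmSSSR} already holds verbatim for this choice of $\mathcal{A}$, with the same quantity $m := \sup_{A \in \mathcal{H}(\bx)} \subreg(H_A;\bx\for\by)$, which is finite because $(c + \chi(\mathcal{H}(\bx)))\,m < 1$ (as in the proof of Theorem~\ref{thmSSSR}, this is forced by \eqref{m}). So the only work is to produce, from the selection $h$ in (i) and the upper-semicontinuity condition (iii), a verification of assumption (i) of Theorem~\ref{thmSSSR} for $\mathcal{A} = \mathcal{H}(\bx)$ --- at the price of slightly enlarging the constant $c$.

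First I would use the strict inequality $(c + \chi(\mathcal{H}(\bx)))\,m < 1$ together with $m < +\infty$ to fix $\eps > 0$ so small that $(c + \eps + \chi(\mathcal{H}(\bx)))\,m < 1$. Then I would invoke (iii) to obtain $\delta > 0$ with $\mathcal{H}(x) \subset \mathcal{H}(\bx) + \eps\ball$ for every $x \in \ball_\delta(\bx)$, and set $r' := \min\{r,\delta\}$, where $r$ and $h$ are as in (i).

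The key step is then the following. Fix any $u \in \ball_{r'}(\bx)$. Since $h$ is a selection for $\mathcal{H}$, we have $h(u) \in \mathcal{H}(u) \subset \mathcal{H}(\bx) + \eps\ball$, so $h(u) = A + A'$ with $A \in \mathcal{H}(\bx)$ and $\|A'\| \le \eps$. Combining this with \eqref{wsemi} and the triangle inequality gives
\[
 \|f(u) - f(\bx) - A(u-\bx)\| \le \|f(u) - f(\bx) - h(u)(u-\bx)\| + \|A'(u-\bx)\| \le (c+\eps)\|u-\bx\|.
\]
Thus assumption (i) of Theorem~\ref{thmSSSR} holds with $\mathcal{A} = \mathcal{H}(\bx)$, radius $r'$, and constant $c+\eps$; since $(c + \eps + \chi(\mathcal{H}(\bx)))\,m < 1$, that theorem applies and yields strong subregularity of $f+F$ at $\bx$ for $\by$ with modulus at most $m / (1 - (c + \eps + \chi(\mathcal{H}(\bx)))\,m)$. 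Letting $\eps \to 0$ gives \eqref{smse} with $\mathcal{A}$ replaced by $\mathcal{H}(\bx)$.

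I do not expect a genuine obstacle: the argument is a clean reduction. The only points requiring care are (a) checking that $\eps$ can be chosen so that the enlarged product $(c+\eps+\chi(\mathcal{H}(\bx)))\,m$ stays below $1$, which is exactly where the strict inequality in (ii) enters, and (b) ensuring that the selection $h$ is available on a neighborhood of $\bx$ contained in both $\ball_r(\bx)$ and $\ball_\delta(\bx)$, so that \eqref{wsemi} and (iii) can be applied simultaneously at the points $u$ entering the estimate.
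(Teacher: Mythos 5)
Your reduction is correct, but it handles the error coming from condition (iii) differently than the paper does, and the difference is worth noting. The paper's proof keeps the approximation constant $c$ fixed and instead enlarges the operator set: it takes $\mathcal{A}:=\mathcal{H}(\ball_r(\bx))$, which contains $h(u)$ for all relevant $u$, and then must pay for this enlargement twice --- by estimating the measure of non-compactness, $\chi(\mathcal{A})\le\chi(\mathcal{H}(\bx))+2\gamma$, and by bounding $m':=\sup_{A\in\mathcal{A}}\subreg(H_A;\bx\for\by)\le m/(1-\gamma m)$, the latter requiring the perturbation result (Corollary~\ref{prop1}, i.e.\ Theorem~\ref{main}) applied to $H_A=H_{\bar A}+(A-\bar A)(\cdot-\bx)$. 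You instead keep $\mathcal{A}=\mathcal{H}(\bx)$ fixed, so that hypothesis (ii) of Theorem~\ref{thmSSSR} transfers verbatim with the same $m$ and $\chi(\mathcal{H}(\bx))$, and you absorb the discrepancy $h(u)\in\mathcal{H}(\bx)+\eps\ball$ into the approximation constant via the triangle inequality, verifying condition (i) with $c+\eps$; the strict inequality in (ii) and $m<1/c<+\infty$ let you choose $\eps$ so that $(c+\eps+\chi(\mathcal{H}(\bx)))m<1$, and letting $\eps\downarrow 0$ recovers the stated modulus bound. Your route is the more economical one: it needs only the triangle inequality rather than the auxiliary subregularity-perturbation lemma and the non-compactness estimate for the union $\mathcal{H}(\ball_r(\bx))$; the paper's route, on the other hand, produces along the way the quantitative statement that the enlarged family $\mathcal{H}(\ball_r(\bx))$ itself satisfies the hypotheses of Theorem~\ref{thmSSSR}, which is slightly more information but is not needed for the stated conclusion. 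Both arguments are valid and yield the same modulus estimate.
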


\begin{proof}
{Let $m$ and $H_A$ be as in Theorem~\ref{thmSSSR} (ii) with $\mathcal{A}$ replaced by $\mathcal{H}(\bx)$. Then
there exists   $\gamma>0$ satisfying
\begin{equation} \label{eqmukappas}
 \big(c + \chi (\mathcal{H}(\bx))+2\gamma\big)  m   < 1 - \gamma m.
\end{equation}
By (iii), we may make $r$ smaller  if necessary to have
\begin{equation} \label{has}
 \mathcal{H} (u) \subset  \mathcal{H} (\bx) + \gamma \ball
 \quad \mbox{for each} \quad   u \in \ball_r(\bx).
\end{equation}
From  the definition of measure of non-compactness, there is a finite set $\mathcal{B} \subset  \mathcal{H}(\bar{x})$ such that
$$
 \mathcal{H} (\bx) \subset \mathcal{B} + \big(\chi (\mathcal{H}(\bx)) + \gamma \big) \ball.
$$
Hence, from  \eqref{has}, for any $u \in \ball_r(\bx)$ we get
$$
  \mathcal{H} (u) \subset   \mathcal{B} + \big(\chi (\mathcal{H}(\bx)) + \gamma \big) \ball +  \gamma  \ball = \mathcal{B} + (\chi (\mathcal{H}(\bx)) + 2\gamma) \ball;$$
that is, $$   \mathcal{H} (\ball_r(\bx))     \subset \mathcal{B}  +  (\chi (\mathcal{H}(\bx)) + 2\gamma)\ball.$$
This shows  that the measure of non-compactness of the set $\mathcal{A}:= \mathcal{H} (\ball_r(\bx))$ is not greater than $\chi (\mathcal{H}(\bx)) + 2\gamma$.
Since $h(u) \in \mathcal{H}(u) \subset \mathcal{A}$ for each $u \in \ball_r(\bx)$ the assumption (i) of Theorem~\ref{thmSSSR} holds. By \eqref{has} we have $\mathcal{A} \subset   \mathcal{H} (\bx) + \gamma \ball$. We will now prove that
\be\label{m'}
 m': =\sup\limits_{A \in \mathcal{A}} \subreg (H_A; \bx \for \by) \leq \frac{m}{1 - \gamma m}.
\ee
Choose  any $A \in \mathcal{A}$. Find $\bar{A} \in \mathcal{H} (\bx)$ such that $\|A - \bar{A}\| \leq \gamma$. Note that, by \eqref{eqmukappas}, we have $\gamma m < 1$. Inasmuch as
$ H_{A} = H_{\bar{A}} + (A - \bar{A}) (\cdot - \bar{x})$, Corollary~\ref{prop1} implies that
$\subreg (H_A; \bx \for \by) \leq m/(1-\gamma m)$. Since  $A \in \mathcal{A}$ was arbitrarily chosen in $\mathcal{A}$ we get \eqref{m'}.

Remembering \eqref{eqmukappas}, we have that $(c+ \chi(\mathcal{A})) m' < 1$; that is, the assumptions in (ii) of Theorem~\ref{thmSSSR} hold with $m$ replaced by $m'$.
Then  $f+F$ is strongly subregular at $\bx$ for $\by$ with modulus not greater than $ m'/(1-  (c+\chi(\mathcal{A})) m')$. This finishes the proof of \eqref{smse} with $\mathcal{A}:=\mathcal{H}(\bx)$}, because $\gamma > 0$ can be chosen arbitrarily close to $0$, which means that $\chi(\mathcal{A})$ and $m'$ can be made arbitrarily close to $\chi(\mathcal{H}(\bx))$ and $m$, respectively.
\end{proof}

Recall that a function $f:\reals^n \to \reals^m $ is said to be
semismooth at {$\bx \in \reals^n$}
when it is Lipschitz continuous
around $\bx$,  directionally differentiable  in every direction, and for every $c > 0$
there exists $r>0$ such that
$$
 \|f(u) - f(\bx) - A(u-\bx) \|\leq c \|u-\bx \|  \text{ for every} u \in \ball_r(\bx) \text{and every }A \in \partial_C f(u).
$$
If $f$ is semismooth at $\bx$ {then for any $c>0$ there is $r>0$ such that inequality \eqref{wsemi} is satisfied with $h$} being any selection  {of} $\partial_B f$;
thus   Theorem~\ref{thmSSSR1} is a subregularity version of a statement in \cite{Gowda}. It also yields
  a version of Corollary~\ref{c1} for Bouligand's limiting Jacobian {which is known to be outer semicontinuous (at any point $\bx \in \reals^n$) \cite[Proposition 7.4.11]{FP}, that is, condition (iii) in Theorem~\ref{thmSSSR1} holds.} 

\begin{corollary} \label{c2}
Let $(\bx,\by) \in \mathbb{R}^n \times {\mathbb{R}^m}$, $f:\mathbb{R}^n \to {\mathbb{R}^m}$ and  $F:\mathbb{R}^n \tto{\mathbb{R}^m}$ be such that $\by \in f(\bx) + F(\bx)$.
Suppose that $f$ is  Lipschitz continuous around
$\bx$
and that for every $c>0$ there exists
$r > 0$ along with a selection $h$ for $\partial_B f$ such that
\begin{equation} \label{wsemi1}
\|f(u)-f(\bx) - h(u)(u-\bx)\|\leq c \|u-\bx\|  \quad \mbox{whenever} \quad u \in \ball_r(\bx).
\end{equation}
Assume that, for each $A \in \partial_B f (\bx)$, the mapping $H_A$ defined in \eqref{HA} is strongly subregular at $\bx$ for $\by$.   Then $f+F$
 is strongly subregular at $\bx$ for $\by$; moreover,
 $$
 \subreg (f+F; \bx \for \by) \leq \sup\limits_{A \in \partial_B f (\bx)} \subreg (H_A; \bx \for \by).
 $$
\end{corollary}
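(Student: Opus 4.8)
\emph{Proof proposal.} The plan is to obtain Corollary~\ref{c2} as a special case of Theorem~\ref{thmSSSR1}, taking the set-valued approximation to be $\mathcal{H}:=\partial_B f$, viewed as a mapping from $\reals^n$ into the subsets of $\mathcal{L}(\reals^n,\reals^m)$ under the usual identification of $m\times n$ matrices with linear operators. First I would record two consequences of the local Lipschitz continuity of $f$ around $\bx$. Since the Jacobians $\nabla f(x)$ are uniformly bounded in operator norm for $x$ near $\bx$, so are their limits, so $\partial_B f(\bx)$ is a bounded --- hence precompact --- subset of the finite-dimensional space $\mathcal{L}(\reals^n,\reals^m)$, whence $\chi(\partial_B f(\bx))=0$; in fact $\partial_B f(\bx)$ is compact. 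Moreover $\partial_B f$ is outer semicontinuous at $\bx$ by \cite[Proposition 7.4.11]{FP}, which, together with its local boundedness, is exactly condition (iii) of Theorem~\ref{thmSSSR1}: for every $\eps>0$ there is $\delta>0$ with $\partial_B f(x)\subset\partial_B f(\bx)+\eps\ball$ whenever $x\in\ball_\delta(\bx)$.

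The heart of the argument is the finiteness of
\[ m:=\sup_{A\in\partial_B f(\bx)}\subreg(H_A;\bx\for\by), \]
that is, that $A\mapsto\subreg(H_A;\bx\for\by)$ is bounded on the compact set $\partial_B f(\bx)$. I would deduce local boundedness from the perturbation result Theorem~\ref{main}: given $A_0\in\partial_B f(\bx)$, pick $\kappa_0>\subreg(H_{A_0};\bx\for\by)$; since $H_A=H_{A_0}+(A-A_0)(\cdot-\bx)$ and the affine map $x\mapsto(A-A_0)(x-\bx)$ is calm at $\bx$ with constant $\|A-A_0\|$, Theorem~\ref{main} yields $\subreg(H_A;\bx\for\by)\le\kappa_0/(1-\kappa_0\|A-A_0\|)$ for all $A$ with $\kappa_0\|A-A_0\|<1$, so $A\mapsto\subreg(H_A;\bx\for\by)$ stays bounded on a neighborhood of $A_0$. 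Covering $\partial_B f(\bx)$ by finitely many such neighborhoods gives $m<+\infty$.

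With $m$ finite and $\chi(\partial_B f(\bx))=0$, I would then use the hypothesis of the corollary as follows: choose $c>0$ so small that $cm<1$, and let $r>0$ and the selection $h$ of $\partial_B f$ be the objects provided by \eqref{wsemi1} for this $c$. This makes condition (i) of Theorem~\ref{thmSSSR1} hold; condition (ii) holds because each $H_A$ with $A\in\partial_B f(\bx)$ is strongly subregular at $\bx$ for $\by$ by assumption and $(c+\chi(\partial_B f(\bx)))\,m=cm<1$; and condition (iii) has already been verified. Theorem~\ref{thmSSSR1} then gives that $f+F$ is strongly subregular at $\bx$ for $\by$ with $\subreg(f+F;\bx\for\by)\le m/(1-cm)$. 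Letting $c\downarrow 0$ produces the sharp estimate $\subreg(f+F;\bx\for\by)\le m=\sup_{A\in\partial_B f(\bx)}\subreg(H_A;\bx\for\by)$, which finishes the argument.

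I expect the only nontrivial point to be the finiteness (equivalently, local boundedness) of $A\mapsto\subreg(H_A;\bx\for\by)$ on $\partial_B f(\bx)$; the remaining steps are simply a matching of the hypotheses of Theorem~\ref{thmSSSR1}. An alternative would be to apply Theorem~\ref{thmSSSR1} with the larger set $\partial_B f(\ball_r(\bx))$, imitating the internal construction in that theorem's proof, but since $\partial_B f(\bx)$ is already compact the covering argument above is the most economical route.
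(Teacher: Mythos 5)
Your proposal is correct and follows essentially the same route as the paper, which obtains Corollary~\ref{c2} precisely by applying Theorem~\ref{thmSSSR1} with $\mathcal{H}:=\partial_B f$, using \eqref{wsemi1} for condition (i), outer semicontinuity of $\partial_B f$ (via \cite[Proposition 7.4.11]{FP}) for condition (iii), and $\chi(\partial_B f(\bx))=0$ with a small $c$ for condition (ii). The only detail the paper leaves implicit --- finiteness of $\sup_{A\in\partial_B f(\bx)}\subreg(H_A;\bx\for\by)$ via compactness and the perturbation estimate of Theorem~\ref{main} --- is exactly the argument the paper itself uses later in the proof of the semismooth Newton theorem, so your filling it in is consistent with, not divergent from, the intended proof.
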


Finally, we consider a derivative-type approximation of the function  $f$ by a positively  homogeneous set-valued mapping.
\begin{theorem} \label{thmSSSR2}
Let $X$ and $Y$ be Banach spaces and consider a function  $f:X\to Y$, a set-valued mapping  $F:X \tto Y$ and a point
 $(\bx,\by) \in X \times Y$ such that $\by \in f(\bx) + F(\bx)$. Suppose that there exist a positively homogeneous  mapping $G:X \tto Y$ and a constant  $c> 0$ such that
 \paritem{(i)} there exists a constant $r>0$ such that
 \be\label{eqpreder}
   f(u) - f(\bx) \in G(u - \bx) + c \|u - \bx\| \ball \quad \mbox{for every} \quad u\in \ball_r(\bx);
\ee
 \paritem{(ii)}  the mapping  $H:=f(\bx) + G(\cdot-\bx) + F$
is strongly subregular at $\bx$ for $\by$ with $\subreg (H; \bx \for {\by})   < 1/c$.\\
 Then  $f+F$ is strongly subregular at $\bx$ for $\by$; moreover
 \be\label{smse2}\subreg (f+F; \bx \for {\by}) \leq \frac{\subreg (H; \bx \for {\by}) }{1-  c \cdot \subreg (H; \bx \for {\by}) }.\ee
\end{theorem}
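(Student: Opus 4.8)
The plan is to imitate the proof of Theorem~\ref{main}, viewing $G$ as a set-valued perturbation that has already been absorbed into the mapping $H$, so that condition (i) plays exactly the role the calmness estimate \eqref{calm} played there. Write $\sigma := \subreg(H;\bx\for\by)$; by (ii) we have $c\sigma<1$, so I would fix $\kappa>\sigma$ with $c\kappa<1$ and invoke the single-neighborhood form \eqref{sr1} of strong subregularity for $H$ to obtain a radius $a\in(0,r]$ (shrinking the neighborhood coming from (ii) to fit inside $\ball_r(\bx)$) such that
$$
 \|x-\bx\|\leq \kappa\, d\big(\by,H(x)\big)\qquad\text{for all } x\in\ball_a(\bx).
$$

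\textbf{Key step.} Fix any $x\in\ball_a(\bx)$ and any $z\in (f+F)(x)$; if no such $z$ exists then $d(\by,(f+F)(x))=+\infty$ and there is nothing to prove, so write $z=f(x)+v$ with $v\in F(x)$. Since $x\in\ball_r(\bx)$, condition (i) (which in particular forces $G(x-\bx)\neq\emptyset$) yields $w\in G(x-\bx)$ and $e\in\ball$ with $f(x)-f(\bx)=w+c\|x-\bx\|\,e$. The point $f(\bx)+w+v$ then lies in $f(\bx)+G(x-\bx)+F(x)=H(x)$, and it equals $z-c\|x-\bx\|\,e$; hence
$$
 d\big(\by,H(x)\big)\leq \|\by-f(\bx)-w-v\|=\big\|\by-z+c\|x-\bx\|\,e\big\|\leq \|\by-z\|+c\|x-\bx\|.
$$
Plugging this into the subregularity estimate for $H$ gives $\|x-\bx\|\leq \kappa\|\by-z\|+\kappa c\|x-\bx\|$, and since $\kappa c<1$ this rearranges to $\|x-\bx\|\leq \tfrac{\kappa}{1-\kappa c}\|\by-z\|$.

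\textbf{Conclusion.} As $z\in(f+F)(x)$ was arbitrary, taking the infimum over all such $z$ gives $\|x-\bx\|\leq \tfrac{\kappa}{1-\kappa c}\,d(\by,(f+F)(x))$ for every $x\in\ball_a(\bx)$, so $f+F$ is strongly subregular at $\bx$ for $\by$. Finally, since $t\mapsto t/(1-ct)$ is increasing on $[0,1/c)$ and $\kappa$ may be taken arbitrarily close to $\sigma$, I would let $\kappa\downarrow\sigma$ to arrive at the bound \eqref{smse2}. I do not expect a genuine obstacle here: the argument is a near-verbatim adaptation of Theorem~\ref{main}, the only points requiring a moment's care being the observation that (i) makes $G(x-\bx)$ nonempty so the needed $w$ exists, and the remark that the single point $z-c\|x-\bx\|\,e$ simultaneously belongs to $H(x)$ and is close to $\by$. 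This is also why the set-valued perturbation does not cause the failure exhibited by the example following Theorem~\ref{main}: the perturbation enters only through the one-sided containment (i) toward a single element of $G(x-\bx)$, not through the sum of two graphs.
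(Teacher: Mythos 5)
Your proposal is correct and follows essentially the same route as the paper's own proof: fix $\kappa>\subreg(H;\bx\for\by)$ with $c\kappa<1$, use (i) to split $f(x)-f(\bx)$ into an element of $G(x-\bx)$ plus an error of norm at most $c\|x-\bx\|$, observe the resulting point lies in $H(x)$, and absorb the $\kappa c\|x-\bx\|$ term before taking the infimum over $F(x)$ and letting $\kappa$ tend to the modulus. The two minor observations you flag (nonemptiness of $G(x-\bx)$ under (i), and the single perturbed point belonging to $H(x)$) are exactly the points the paper's argument uses implicitly, so there is nothing to add.
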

\begin{proof} {Let $\kappa >  \subreg (H; \bx \for {\by})$ be such that $c \kappa < 1$. Shrink $r$, if necessary, to have
$$
 \|x-\bx\| \leq \kappa d(\by, H(x)) \quad \mbox{for each} \quad x\in \ball_r(\bx).
$$
Choose any $x\in \ball_r(\bx)$ and then an arbitrary $y \in F(x)$. By \eqref{eqpreder} we find $w \in c \|x - \bx\| \ball$ such that  $f(x)-f(\bx)- w \in G(x - \bx)$.
Then $f(x)- w + y \in f(\bx) + G(x - \bx) + F(x) = H(x)$ and we have
\begin{eqnarray*}
\|x-\bx\| & \leq & \kappa d(\by, H(x)) \leq \kappa \|\by - f(x) + w - y\| \leq \kappa  \|(\by - f(x)) -  y\| + \kappa \|w\| \\
& \leq  &\kappa  \|(\by - f(x)) -  y\|  + \kappa c \|x - \bx\|.
\end{eqnarray*}
Therefore $(1 - c \kappa)\|x-\bx\| \leq \kappa \|(\by - f(x)) -  y)\|$ for any $y \in F(x)$. Thus, we have
$$
 \|x - \bx\| \leq \frac{\kappa}{1-c\kappa} d(\by-f(x),F(x)) = \frac{\kappa}{1-c\kappa} d\big(\by, (f+F)(x) \big).
$$
Noting that  $x$ was arbitrarily chosen in  $\ball_r(\bx)$ and $\kappa$ can be chosen arbitrarily close to $\subreg (H; \bx \for {\by})$,  the proof is complete.}
\end{proof}

Taking $F \equiv 0$ the above proof gives  a  direct proof of \cite[{Theorem 4.2}]{u}. We show next that  Theorem~ \ref{thmSSSR2}  implies Theorem~\ref{thmSSSR}.

\begin{remark} \label{remark}\rm
 Let $f$, $F$, $(\bx, \by)$, $\mathcal{A}$, $c$, $m$ and $r$ be as in Theorem~\ref{thmSSSR}. Define $G: X \tto Y$ by $G(u):=\{Au \mset A \in \mathcal{A}\}$, $u \in X$. Then the condition (i) in Theorem~\ref{thmSSSR} implies (i) in Theorem~\ref{thmSSSR2}.
The mapping $H$ from Theorem~\ref{thmSSSR2} (ii) has  $\subreg (H; \bx \for {\by}) \leq m/(1-m\chi(A)) =:m'$. Indeed, in the proof of \eqref{bbbs} we showed that for any  $\kappa > m$ and any $\gamma > 0$ sufficiently close to $m$ and $0$, respectively, there exists  $a \in (0,r]$  such that
$$
   \| x - \bx\| \leq \frac{\kappa}{1 - \kappa (\chi (\mathcal{A}) + \gamma)} d\big({\by}, f(\bx) + A(x-\bx) + F(x)\big) \quad \mbox{whenever} \quad x \in \ball_a(\bx)
   \ \mbox{ and } \ A\in \mathcal{A}.
$$
Fix any  $x \in \ball_a(\bx)$, and then pick arbitrary $v \in H(x)$ (if  any). The very definition of the mapping $H$ implies that there is $A \in \mathcal{A}$ such that $v \in  f(\bx) + A(x-\bx) + F(x)$. Then
$$
\| x - \bx\| \leq \frac{\kappa}{1 - \kappa (\chi (\mathcal{A}) + \gamma)} d\big({\by}, f(\bx) + A(x-\bx) + F(x)\big) \leq \frac{\kappa}{1 - \kappa (\chi (\mathcal{A}) + \gamma)} \|\by - v\|.
$$
Taking into account that $v$ is a fixed element of $H(x)$, and the constants $\kappa$ and $\gamma$ can be arbitrarily close to $m$ and $0$,
respectively,
we obtain the desired estimate for the subregularity modulus of $H$. Inequality \eqref{m} implies that
$m' c < 1$. Therefore condition (ii) in Theorem~\ref{thmSSSR2}  holds. Hence $f+F$ is strongly subregular at $\bx$ for $\by$ and
$$
\subreg (f+F; \bx \for \by) \leq \frac{m' }{1-  c m'} = \frac{m}{1-(c + \chi(\mathcal{A})) m}.
$$
\end{remark}

 A result analogous to Corollary~\ref{c1}  for strong regularity was stated  in
\cite{I}; a complete proof  extended to Banach spaces is given  in \cite{NR}. In a
more recent paper \cite{NG} a nonsmooth version of the Lyusternik-Graves theorem for metric regularity  is obtained. We note that the proofs in \cite{NR} and \cite{NG} are much more involved than the proofs of Theorems~\ref{thmSSSR} and
\ref{thmSSSR1} and use other conditions, {for example},
  convexity of the set $\mathcal{A}$ of derivative approximations.

\section{Strong $q$-subregularity}\label{Secq}

We  consider in this section an extension of the strong  metric subregularity,  the so-called strong  metric $q$-subregularity, defined as follows. For a positive scalar $q$, a mapping $F:X \tto Y$ acting between metric spaces $X$ and $Y$ is said to be strongly $q$-subregular at $\bx$ for $\by$ when $(\bx,\by) \in \mbox{gph} F$ and there exist a constant $\kappa\geq 0$ and  a neighborhood $U$ of $\bx$ such that
$$
      \rho(x,\bx) \leq \kappa d(\by, F(x))^q
      \; \text{for all} x \in U.
$$
The (usual) strong subregularity is obtained for $q=1$.

 Observe  that for $q\neq 1$  this property is not stable under linearization, in the sense of Proposition~\ref{propo}. As a counterexample take $F(x) = x^3$ with $\bx=0$.  However, if we consider perturbations by a function which is calm of order $1/q$,
then a simple modification of  the proof of Theorem~\ref{main} gives us perturbation stability. {Given $\gamma>0$, a function $g:X\to Y$ is said to be $\gamma$-calm  at $\bx \in\dom g$ with the constant $\mu \ge 0$ provided that there is a neighborhood $U$ of $\bx$ such that
$$
 \rho(g(x), g(\bx)) \leq \mu \rho(x, \bx)^{\gamma} \quad \mbox{for each} \quad x \in U \cap\dom g.
$$
The precise result is as follows:

\begin{theorem}\label{mainq}
Let $X$ be  a metric space and $Y$ be a linear metric space with shift invariant metric.
Let $a\in (0,1]$, $q>0$, and $\gamma \in [1/q,+\infty)$, and let  $\kappa$ and  $\mu$ be positive constants such that $\kappa \mu^q < 1$. Suppose that a mapping
$G:X \tto Y$ is strongly $q$-subregular at $\bx$ for $\by$ with   constant $\kappa$ and  neighborhood $\ball_a(\bx)$. Also, consider  a function $g:X\to Y$ which  is $\gamma$-calm  at $\bx$ with   constant $\mu$ and  neighborhood $\ball_a(\bx)$.
Then $g+G$ is  strongly $q$-subregular at $\bx$ for $\by+g(\bx)$ with  constant  $\kappa/(1-\kappa^{\frac{1}{q}}\mu)^q$ and  neighborhood $\ball_a(\bx)$.
\end{theorem}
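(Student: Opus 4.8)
The plan is to mimic the proof of Theorem~\ref{main} almost verbatim, carrying the exponent $q$ through the chain of inequalities and using the $\gamma$-calmness of $g$ together with $\gamma\ge 1/q$ to dominate the error term. First I would record the two working hypotheses on the ball $\ball_a(\bx)\cap\dom g$: $\rho(x,\bx)\le\kappa\,d(\by,G(x))^q$ and $\rho(g(x),g(\bx))\le\mu\,\rho(x,\bx)^{\gamma}$. As in Theorem~\ref{main}, note $\dom(g+G)=\dom g\cap\dom G$, pick $x\in\ball_a(\bx)\cap\dom g$ and $z\in g(x)+G(x)$ (with nothing to prove if no such $z$ exists), and write $z=y+g(x)$ with $y\in G(x)$. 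Using shift invariance of the metric,
$$
 d(\by,G(x))\le\rho(\by,y)=\rho(\by,z-g(x))\le\rho(\by,z-g(\bx))+\rho(g(x),g(\bx))\le\rho(\by,z-g(\bx))+\mu\,\rho(x,\bx)^{\gamma}.
$$

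The key step is then to raise this to the $q$-th power and absorb the $\rho(x,\bx)^{\gamma}$ term. Since $q>0$, the map $t\mapsto t^q$ is nondecreasing, so $\rho(x,\bx)\le\kappa\big(\rho(\by,z-g(\bx))+\mu\,\rho(x,\bx)^{\gamma}\big)^q$. Taking $q$-th roots (again monotone), $\rho(x,\bx)^{1/q}\le\kappa^{1/q}\rho(\by,z-g(\bx))+\kappa^{1/q}\mu\,\rho(x,\bx)^{\gamma}$. Here is where $\gamma\ge 1/q$ enters: shrinking $a\le 1$ if needed (the hypothesis $a\in(0,1]$ is already built in) guarantees $\rho(x,\bx)\le 1$, hence $\rho(x,\bx)^{\gamma}\le\rho(x,\bx)^{1/q}$, so the last term is bounded by $\kappa^{1/q}\mu\,\rho(x,\bx)^{1/q}$. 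Since $\kappa^{1/q}\mu=(\kappa\mu^q)^{1/q}<1$, I can move that term to the left-hand side to obtain $(1-\kappa^{1/q}\mu)\,\rho(x,\bx)^{1/q}\le\kappa^{1/q}\rho(\by,z-g(\bx))$, i.e.
$$
 \rho(x,\bx)\le\frac{\kappa}{(1-\kappa^{1/q}\mu)^q}\,\rho(\by,z-g(\bx))^q=\frac{\kappa}{(1-\kappa^{1/q}\mu)^q}\,\rho\big(\by+g(\bx),z\big)^q,
$$
again using shift invariance in the last equality.

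Finally, since $z\in g(x)+G(x)=(g+G)(x)$ was arbitrary, taking the infimum over such $z$ gives $\rho(x,\bx)\le\kappa(1-\kappa^{1/q}\mu)^{-q}\,d\big(\by+g(\bx),(g+G)(x)\big)^q$ for all $x\in\ball_a(\bx)\cap\dom g$, which is exactly strong $q$-subregularity of $g+G$ at $\bx$ for $\by+g(\bx)$ with the claimed constant and neighborhood. The only genuinely new point compared with Theorem~\ref{main} is the interplay of the exponents, so the main obstacle — really a bookkeeping subtlety rather than a deep one — is making sure the $\gamma$-calm error term is comparable to the right power of $\rho(x,\bx)$; this is precisely why both the normalization $a\le 1$ and the assumption $\gamma\ge 1/q$ are imposed, and the argument would fail for $\gamma<1/q$ since then $\rho(x,\bx)^\gamma$ could dominate $\rho(x,\bx)^{1/q}$ near $\bx$. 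Everything else is the same splitting-and-absorption trick, so I would present the proof compactly, pointing to Theorem~\ref{main} for the steps that are identical.
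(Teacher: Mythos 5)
Your proposal is correct and follows essentially the same route as the paper: the paper's own proof of Theorem~\ref{mainq} is exactly the argument of Theorem~\ref{main} with the exponent bookkeeping you describe, using shift invariance, the triangle inequality, the bound $\rho(x,\bx)^{\gamma}\le\rho(x,\bx)^{1/q}$ (valid since $a\le 1$ and $\gamma\ge 1/q$), and absorption via $\kappa^{1/q}\mu<1$. The only cosmetic difference is that the paper works directly with the distance function $d(\by,G(x))$ rather than picking an element $z\in(g+G)(x)$ and taking the infimum at the end, but the estimates are identical.
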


\begin{proof} The proof repeats that of Theorem~\ref{main} with some
 adjustments of the
exponents.
By assumption, we have
$$
      \rho(x,\bx)^{\frac{1}{q}} \leq \kappa^{\frac{1}{q}} d(\by, G(x)) \text{ and } \rho(g(x), g(\bx))\leq \mu\rho(x,\bx)^\gamma
      \; \text{for all} x \in \ball_a(\bx) \cap \dom g.
$$
Observe that $\dom(g+G)=\dom g\cap\dom G$.
Take any $x \in \ball_a(\bx)\cap\dom g$. If $G(x)$ is empty we are done. If $G(x) \neq \emptyset$ then
\bas
\rho(x,\bx)^{\frac{1}{q}} & \leq & \kappa^{\frac{1}{q}} d(\by, G(x)) \leq  \kappa^{\frac{1}{q}} d(\by+g(\bx)-g(x), G(x)) + \kappa^{\frac{1}{q}} \rho(g(x), g(\bx)) \\
& \leq & \kappa^{\frac{1}{q}} d(\by+g(\bx), (g+G)(x)) + \kappa^{\frac{1}{q}} \mu\rho(x,\bx)^{\gamma}.
\eas
Since {$a \le 1$} and $\gamma \in [1/q,+\infty)$ we have $\rho(x,\bx)^{\gamma} \leq \rho(x,\bx)^{\frac{1}{q}}$.  Taking into account that $\kappa^{\frac{1}{q}} \mu < 1$, we obtain
$$\rho(x,\bx)^{\frac{1}{q}} \leq \frac{\kappa^{\frac{1}{q}}}{1-\kappa^{\frac{1}{q}}\mu}d(\by+g(\bx), (g+G)(x))$$
and the proof is complete.
\end{proof}

As in the standard case with $q=1$, when $X$ and $Y$ are Banach spaces and the perturbation is represented by a  Fr\'echet differentiable function, we can say more about perturbation stability.
\begin{theorem} \label{thmFD}
Let $X$ and $Y$ are Banach spaces
and let  $q\geq 1$ {and $(\bx,\by) \in X \times Y$. Consider} a function $f:X\to Y$ which is Fr\'echet differentiable at $\bx$
and a set-valued mapping $F:X\tto Y$ such that $\by \in f(\bx)+F(\bx)$. Then the mapping $f+F$ is strongly $q$-subregular at $\bx$ for $\by$ if and only if
the mapping $H:=f(\bx) + Df(\bx)(\cdot-\bx) +F$ has the same property.\\
\indent Assume, in addition, that  $f$ is Fr\'echet differentiable around $\bx$ and $Df$ is continuous at $\bx$. Then $f+F$ is strongly $q$-subregular
at $\bx$ for $\by$ if and only if there are $\lambda>0$ and $a>0$ such that for any $u \in \ball_a(\bx)$ the mapping $H_u:=f(\bx) + Df(u)(\cdot-\bx) +F$ is strongly $q$-subregular
at $\bx$ for $\by$ with constant $\lambda$ and neighborhood $\ball_a(\bx)$.
 \end{theorem}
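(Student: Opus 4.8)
The plan is to derive both parts from the perturbation theorem Theorem~\ref{mainq}, the only analytic input being that Fr\'echet differentiability makes the relevant perturbations calm with arbitrarily small modulus. For the first part I would set $h := f(\bx) + Df(\bx)(\cdot-\bx)$ and $g := f - h$, so that $f+F = g+H$ and, symmetrically, $H = (-g) + (f+F)$; note $g(\bx)=0$. Fr\'echet differentiability of $f$ at $\bx$ says precisely that $\clm(g;\bx)=0$, so for every $\mu>0$ there is a neighborhood $\ball_a(\bx)$ on which $g$ (and likewise $-g$) is $1$-calm with constant $\mu$. Since $q\ge 1$ we have $1\in[1/q,+\infty)$, so Theorem~\ref{mainq} applies with $\gamma=1$: starting from strong $q$-subregularity of $H$ at $\bx$ for $\by$ with some constant $\kappa>0$ on some $\ball_{a_1}(\bx)$, I would choose $\mu$ with $\kappa\mu^q<1$, shrink to a common radius $a\le\min\{a_1,1\}$ on which $g$ is $1$-calm with that $\mu$, and conclude that $g+H=f+F$ is strongly $q$-subregular at $\bx$ for $\by+g(\bx)=\by$. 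The reverse implication is the same argument applied to the perturbation $-g$ of $f+F$.

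For the second part, the implication ``$\Leftarrow$'' is immediate: taking $u=\bx$ gives strong $q$-subregularity of $H=H_{\bx}$, and the first part transfers it to $f+F$. For ``$\Rightarrow$'', I would start from the first part to get that $H$ is strongly $q$-subregular at $\bx$ for $\by$, say with constant $\kappa>0$ and neighborhood $\ball_{a_0}(\bx)$, $a_0\le 1$. For $u$ near $\bx$ I would write $H_u = H + G_u$ with $G_u := (Df(u)-Df(\bx))(\cdot-\bx)$, a bounded linear perturbation, hence $1$-calm at $\bx$ on all of $X$ with constant $\|Df(u)-Df(\bx)\|$ and with $G_u(\bx)=0$. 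Fixing $\epsilon\in(0,\kappa^{-1/q})$ and using continuity of $Df$ at $\bx$ to choose $a\in(0,a_0]$ with $\|Df(u)-Df(\bx)\|<\epsilon$ for all $u\in\ball_a(\bx)$, one more application of Theorem~\ref{mainq} on the common neighborhood $\ball_a(\bx)$ (with $\gamma=1$) yields that, for every such $u$, $H_u$ is strongly $q$-subregular at $\bx$ for $\by$ with constant $\kappa/(1-\kappa^{1/q}\|Df(u)-Df(\bx)\|)^q\le\kappa/(1-\kappa^{1/q}\epsilon)^q=:\lambda$ and neighborhood $\ball_a(\bx)$, which is exactly the asserted uniform statement.

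I do not expect a genuine obstacle; the work is bookkeeping of radii and constants. The point to watch is uniformity in $u$ in the ``$\Rightarrow$'' direction: the constant $\lambda$ and the radius $a$ must not depend on $u\in\ball_a(\bx)$, and this holds because $u$ enters only through $\|Df(u)-Df(\bx)\|$, which is made uniformly small by continuity of $Df$, while the modulus produced by Theorem~\ref{mainq} depends monotonically only on $\kappa$ and on that single quantity. The hypotheses $q\ge 1$ (so that $1\in[1/q,+\infty)$) and continuity of $Df$ at $\bx$ are precisely what legitimize the two invocations of Theorem~\ref{mainq}.
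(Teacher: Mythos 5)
Your argument is correct and follows essentially the paper's own route: both parts are applications of Theorem~\ref{mainq} with $\gamma=1$, the calmness constants being made small by Fr\'echet differentiability at $\bx$ (first part) and by continuity of $Df$ at $\bx$ (second part), with uniformity in $u$ coming from the fact that the resulting modulus depends on $u$ only through a quantity bounded by a fixed $\epsilon$. The only cosmetic difference is in the second part: you perturb $H$ (already known to be strongly $q$-subregular from the first part) by the linear term $(Df(u)-Df(\bx))(\cdot-\bx)$, whereas the paper perturbs $f+F$ directly by $g_u:=f(\bx)+Df(u)(\cdot-\bx)-f$ using the uniform estimate $\|f(x)-f(\bx)-Df(u)(x-\bx)\|\le\mu\|x-\bx\|$ for $x,u$ near $\bx$; both routes give a constant and neighborhood independent of $u$.
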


\begin{proof}
The Fr\'echet differentiability of $f$ {means that} the function  $g:=f(\bx) + Df(\bx)(\cdot-\bx) - f$ {has}
$\clm(g;\bx) = 0$. Let $\kappa > 0$ be such that $f+F$ is strongly $q$-subregular at $\bx$ for $\by$ with constant $\kappa$.  Clearly,  there are  $\mu > 0$ and $a\in (0,1]$  such that $G:=f+F$ and $g$ satisfy the assumptions of  Theorem~\ref{mainq} with $\gamma=1$;  hence $H=G+g$  is strongly $q$-subregular at $\bx$ for $\by$. To prove the opposite implication, use $H$ and $-g$ as $G$ and $g$, respectively.

Now suppose that $f$ is continuously differentiable at $\bx$. Let $\kappa>0$ and $a\in(0,1)$ be such that the mapping $G:=f+F$ is strongly $q$-subregular at $\bx$ for $\by$ with constant $\kappa$ and neighborhood $\ball_a(\bx)$. Let $\mu > 0$ be such that $\kappa \mu^q < 1$. Using standard calculus and making $a$ smaller, if necessary, we have that
$$
\|f(x) - f(\bx) - Df(u)(x - \bx)\| \leq \mu \|x - \bx \| \quad \mbox{whenever} \quad x,u \in \ball_a(\bx).
$$
Fix any $u \in \ball_a(\bx)$. Then $g_u:=f(\bx) + Df(u)(\cdot-\bx) - f$ is calm at $\bx$  with a constant $\mu$ and a  neighborhood $\ball_a(\bx)$; moreover $g(\bx)=0$. Applying Theorem~\ref{mainq} with $\gamma=1$, we get that $H_u = G + g_u $ is strongly $q$-subregular at $\bx$ for $\by$ with a constant $\lambda := \kappa/(1-\kappa^{\frac{1}{q}}\mu)^q$, which is independent of $u$. The opposite direction follows from the first part of the statement.
\end{proof}

We end this section with some comments regarding the recent paper \cite{MO}. Taking $\gamma=1$ and $q\geq 1$ in Theorem~\ref{mainq}, one obtains \cite[Theorem 4.1]{MO} where the authors use the stronger  assumption  that the single-valued perturbation is Lipschitz continuous around $\bx$. The first part of Theorem~\ref{thmFD} slightly improves \cite[Corollary 4.2]{MO} where  strict differentiability of the single-valued part is assumed, while the second
 echoes  \cite[Theorem 4.4]{MO}.

\section{ Conditions involving generalized derivatives }\label{gd}

In this section $X$ and $Y$ are Banach spaces and  $X^*$ and $Y^*$ are their duals, respectively.
It follows directly from the definition
that a mapping $F:X \tto Y$ with $(\bx, \by) \in \gph F$ is strongly subregular at $\bx$ for $\by$ if and only if
its \emph{steepest displacement rate}  at $\bx$ for $\by$ defined as
 \begin{equation}\label{P5.1.1-1}
  |F|^{\downarrow}(\bx \for \by) := \liminf_{x\to\bx} \frac{d(\by,F(x))}{\|x - \bx\|}
\end{equation}
is positive (with the convention that the limit in \eqref{P5.1.1-1} is $+\infty$ when $\bx$ is an isolated point in $\dom F$). This notion was introduced by A. Uderzo in \cite{u}. It is elementary to check (see \cite[{Proposition 2.1}]{u}) that
\begin{equation} \label{eqsubdisp}
  |F|^{\downarrow}(\bx \for \by)  \cdot \subreg(F;\bx\for\by) = 1,
\end{equation}
where we set $0 \cdot (+ \infty) =  (+ \infty) \cdot 0  = 1$.  Thus, if $F$ is strongly subregular at $\bx$ for $\by$ with a constant $\kappa > 0$ then we have $|F|^{\downarrow}(\bx \for \by) \geq \kappa^{-1}$. Conversely, if $|F|^{\downarrow}(\bx \for \by) > \kappa^{-1}$ for some $\kappa >0$ then $F$ is strongly subregular at $\bx$ for $\by$ with the constant $\kappa$.

When $\bx$ is not an isolated point in $F^{-1}(\by)$, then $|F|^{\downarrow}(\bx \for \by)=0$.
Otherwise, the steepest displacement rate \eqref{P5.1.1-1} coincides with the subregularity constant
\begin{equation*}
^sr[F](\bx,\by):= \liminf_{\substack{x\to\bx\\x\notin F^{-1}(\by)}} \frac{d(\by,F(x))}{d(x,F^{-1}(\by))}
\end{equation*}
extensively used in \cite{Kru15} when characterizing metric subregularity.

First, we focus on conditions based on tangential approximation of the graph of the mapping in question. Let $\Omega$ be a set in  $X$ and let $\bx \in \Omega$. The \emph{Bouligand-Severi tangent cone} to $\Omega$ at $\bx$, denoted by $T_\Omega(\bx)$,  is the set of all $w \in X$ such that there are
sequences $\{w_k\}$ in $X$ and $\{t_k\}$ in $(0, +\infty)$ converging to $w$ and $0$, respectively, such that $\bx + t_k w_k \in \Omega$ for each $k \in \N$.
For a mapping  $F : X \tto Y$  with $(\bx,\by) \in \gph F$,  the \emph{graphical derivative mapping}  of $F$ at  $(\bx,\by)$ is defined as
$$
X \ni u \mapsto  {D}F(\bx\for\by)(u):= \{v \in  Y \mset \ (u,v) \in  T_{\gph F} (\bx,\by) \}.
$$
The following is a  generalization of  \cite[Theorem 4E.1]{book} which goes back to Rockafellar  \cite{R89}:

\begin{theorem} \label{thm4E1}
Consider a mapping $F:X \tto Y$ with $(\bx, \by)\in\gph F$.   Then
\begin{equation} \label{ssregDF}
         \| DF(\bx\for\by)^{-1} \| ^\plus \leq \subreg (F;\bx\for\by).
\end{equation}
If, in addition,  the dimension of $X$ is finite,  then
$$
   \subreg (F;\bx\for\by) < +\infty  \quad \Longleftrightarrow \quad     \| DF(\bx\for\by)^{-1} \| ^\plus <+\infty  \quad \Longleftrightarrow \quad   DF(\bx\for\by)^{-1}(0)=\{0\};
$$
 that is, $F$ is strongly subregular at $\bx$ for
$\by$ if and only if $\| DF(\bx\for\by)^{-1} \| ^\plus$ is finite.
\newline\indent Moreover,  if both $X$ and  $Y$ are finite-dimensional, then \eqref{ssregDF} holds as  equality.
\end{theorem}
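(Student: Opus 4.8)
The plan is to prove the three assertions in turn, working always from the characterization of strong subregularity through the steepest displacement rate and the definition of the graphical derivative via the tangent cone $T_{\gph F}(\bx,\by)$.

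First I would establish the inequality \eqref{ssregDF}, which holds in arbitrary Banach spaces. By \eqref{eqsubdisp} it suffices to show $\|DF(\bx\for\by)^{-1}\|^{\plus}\cdot |F|^{\downarrow}(\bx\for\by)\le 1$, or equivalently that for every $u$ with $\|u\|\le 1$ and every $v\in DF(\bx\for\by)(u)$ one has $\|u\|\le\kappa\|v\|$ whenever $\kappa>\subreg(F;\bx\for\by)$. So fix such a pair $(u,v)\in T_{\gph F}(\bx,\by)$; by definition of the tangent cone there are sequences $t_k\downto 0$ and $(u_k,v_k)\to(u,v)$ with $(\bx+t_ku_k,\by+t_kv_k)\in\gph F$, i.e. $\by+t_kv_k\in F(\bx+t_ku_k)$. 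Applying the subregularity inequality \eqref{sr1} at the point $x_k:=\bx+t_ku_k$ (which lies in the relevant neighborhood for large $k$) gives $t_k\|u_k\|=\rho(x_k,\bx)\le\kappa\,d(\by,F(x_k))\le\kappa\,\rho(\by,\by+t_kv_k)=\kappa t_k\|v_k\|$. Dividing by $t_k$ and passing to the limit yields $\|u\|\le\kappa\|v\|$; taking the supremum over admissible $(u,v)$ and then the infimum over $\kappa$ gives \eqref{ssregDF}.

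Next I would treat the finite-dimensional domain case. The first two implications in the displayed chain are immediate: \eqref{ssregDF} gives ``$\subreg<+\infty\Rightarrow\|DF^{-1}\|^{\plus}<+\infty$'', and $\|DF(\bx\for\by)^{-1}\|^{\plus}<+\infty$ trivially forces $DF(\bx\for\by)^{-1}(0)\subset\{0\}$ (a nonzero element of the preimage of $0$ would make the outer norm infinite since the graph is a cone), while $0\in DF(\bx\for\by)^{-1}(0)$ always holds. The real content is the reverse implication $DF(\bx\for\by)^{-1}(0)=\{0\}\Rightarrow\subreg(F;\bx\for\by)<+\infty$, and here finite-dimensionality of $X$ is used through a compactness argument. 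Suppose $F$ is not strongly subregular at $\bx$ for $\by$; then $|F|^{\downarrow}(\bx\for\by)=0$, so there is a sequence $x_k\to\bx$, $x_k\neq\bx$, with $d(\by,F(x_k))/\|x_k-\bx\|\to 0$. Write $t_k:=\|x_k-\bx\|\downto 0$ and $u_k:=(x_k-\bx)/t_k$, so $\|u_k\|=1$; since $X$ is finite-dimensional, after passing to a subsequence $u_k\to u$ with $\|u\|=1$. Pick $y_k\in F(x_k)$ with $\rho(\by,y_k)\le d(\by,F(x_k))+t_k^2$, and set $v_k:=(y_k-\by)/t_k$; then $\|v_k\|=\rho(y_k,\by)/t_k\le d(\by,F(x_k))/t_k+t_k\to 0$, so $v_k\to 0$. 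By construction $\by+t_kv_k=y_k\in F(\bx+t_ku_k)$, hence $(u,0)\in T_{\gph F}(\bx,\by)$, i.e. $u\in DF(\bx\for\by)^{-1}(0)$ with $u\neq 0$, contradicting the hypothesis. This proves the equivalences and the final sentence of that paragraph.

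Finally I would prove that when both $X$ and $Y$ are finite-dimensional, \eqref{ssregDF} is an equality; in view of the already-established inequality it remains to show $\subreg(F;\bx\for\by)\le\|DF(\bx\for\by)^{-1}\|^{\plus}$. If the right-hand side is $+\infty$ there is nothing to prove, so assume it is a finite number, say bounded by some $\kappa$; I want to conclude $\rho(x,\bx)\le\kappa'\,d(\by,F(x))$ near $\bx$ for any $\kappa'>\kappa$. Arguing by contradiction, suppose this fails for every neighborhood of $\bx$ with constant $\kappa'$: then there is $x_k\to\bx$, $x_k\neq\bx$, with $d(\by,F(x_k))<\kappa'^{-1}\rho(x_k,\bx)$. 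Exactly as above, normalize by $t_k:=\rho(x_k,\bx)$, extract $u_k=(x_k-\bx)/t_k\to u$ with $\|u\|=1$ (using finite-dimensionality of $X$), choose near-optimal $y_k\in F(x_k)$, set $v_k=(y_k-\by)/t_k$, and note $\|v_k\|\le d(\by,F(x_k))/t_k+o(1)<\kappa'^{-1}+o(1)$, so $\{v_k\}$ is bounded; using finite-dimensionality of $Y$, extract $v_k\to v$ with $\|v\|\le\kappa'^{-1}<\kappa^{-1}$. Then $(u,v)\in T_{\gph F}(\bx,\by)$ so $v\in DF(\bx\for\by)(u)$, i.e. $u\in DF(\bx\for\by)^{-1}(v)$, whence $\|u\|\le\|DF(\bx\for\by)^{-1}\|^{\plus}\,\|v\|\le\kappa\|v\|<1$, contradicting $\|u\|=1$. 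Hence the desired inequality holds, completing the proof.

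\textbf{Main obstacle.} The nontrivial point is the two compactness/contradiction arguments that need finite-dimensionality: one must simultaneously control the normalized domain directions $u_k$ and the normalized image displacements $v_k$, and the boundedness of $\{v_k\}$ (which lets one extract a convergent subsequence in $Y$) is precisely what the hypothesis $d(\by,F(x_k))\lesssim\rho(x_k,\bx)$ buys. Care is also needed with the ``near-optimal selection'' $y_k$ (allowing an $o(t_k)$ slack) since $F(x_k)$ need not be closed, and with keeping track of which neighborhood and which strict inequality between constants is in force so that the tangent-cone membership and the outer-norm estimate line up.
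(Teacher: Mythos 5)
Your proposal is correct and follows essentially the same route as the paper's proof: the inequality \eqref{ssregDF} via tangent-cone sequences and the subregularity estimate at $\bx+t_ku_k$, the finite-dimensional equivalences via the normalization $u_k=(x_k-\bx)/t_k$ and a compactness/contradiction argument, and the equality case by additionally extracting a convergent subsequence of the bounded $v_k$ in finite-dimensional $Y$. The only cosmetic difference is that you close the equivalence cycle by proving directly that a nonzero element of $DF(\bx\for\by)^{-1}(0)$ forces an infinite outer norm (and you spell out the near-optimal selection of $y_k$), whereas the paper invokes \cite[Proposition 5A.7]{book} for that equivalence.
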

\begin{proof}
For  the first part of the claim, note that if the right-hand side of  \eqref{ssregDF} is infinite then we are done. If not, pick  $\kappa > \subreg (F;\bx\for\by)$ and then  $a > 0$ such that
$$
 \|x - \bx \| \leq \kappa d(\by, F(x)) \quad \mbox{for each} \quad x \in \ball_a(\bx).
$$
Fix an arbitrary $(u,v)  \in \gph DF(\bx\for\by) = T_{\gph F}(\bx,\by)$. Then there exist sequences $\{u_k\}$ in $X$ and
$\{v_k\}$ in $Y$, as well as  $\{t_k\}$ in $(0,1)$, converging to $u$, $v$, and $0$, respectively, such that
$ \by +t_kv_k \in F(\bx + t_ku_k)$ for each $k \in \N$. For $k$ sufficiently large we have $x_k:=\bx + t_k u_k  \in \ball_a(\bx)$ and hence
$$
 t_k \|u_k\| = \|x_k - \bx\| \leq \kappa \|\by + t_k v_k - \by\| = t_k \kappa \|v_k\|.
$$
Consequently, $\|u \| \leq \kappa \|v\|$ for each $(u,v)  \in \gph DF(\bx\for\by)$. Thus $\| DF(\bx\for\by)^{-1} \| ^\plus \leq \kappa$.  Letting $\kappa \downarrow \subreg (F;\bx\for\by)$ we get \eqref{ssregDF}.

Now, let   $X$ be finite-dimensional.  By \cite[Proposition 5A.7]{book} we know that $\| DF(\bx\for\by)^{-1} \| ^\plus$ is finite if and only if $DF(\bx\for\by)^{-1}(0)=\{0\}$ . In view of \eqref{ssregDF}, it is sufficient to prove the $\Longleftarrow$ part in the first equivalence. Let $\| DF(\bx\for\by)^{-1} \| ^\plus$ be finite. Suppose on the contrary that $F$ is not strongly subregular at $\bx$ for $\by$. Then there is  a sequence $\{(x_k,y_k)\}$ in $\gph F$ converging to $(\bx, \by)$ such that
 $$
 \|x_k - \bx\| > k \|y_k - \by\| \quad \mbox{for each} \quad k \in \N.
$$
Let $t_k:= \|x_k - \bx\|$, $u_k:=(x_k - \bx)/t_k$, and $v_k := (y_k - \by)/t_k$, $k \in \N$. By the above inequality, $t_k \downarrow 0$ and $v_k \to 0$ as $k \to + \infty$. Since $X$ is finite-dimensional, we can assume that  $\{u_k\}$ converges  to  some $u \in X$ with $\|u\|=1$. Noting that
\begin{equation} \label{eqbybx}
 \by + t_k v_k = y_k \in F(x_k) = F(\bx + t_k u_k) \quad \mbox{for each} \quad k \in \N,
\end{equation}
we get that $0 \in DF(\bx\for\by)(u)$ for $u \neq 0$, that is, $\| DF(\bx\for\by)^{-1} \| ^\plus = +\infty$, a contradiction.

Let   $Y$ be  finite-dimensional as well. Suppose that \eqref{ssregDF} is strict; then there is a (positive) constant $\kappa$ such that  $ \| DF(\bx\for\by)^{-1} \| ^\plus < \kappa < \subreg (F;\bx\for\by)$. Find a sequence $\{(x_k,y_k)\}$ in $\gph F$ converging to $(\bx, \by)$ such that
\begin{equation} \label{xkkappa}
 \|x_k - \bx\| > \kappa \|y_k - \by\| \quad \mbox{for each} \quad k \in \N.
\end{equation}
Let $\{t_k\}$, $\{u_k\}$, and $\{v_k\}$ be defined as in the previous paragraph. For each $k \in \N$, we have $t_k > 0$, $\|u_k\|=1$, and $ v_k \in \kappa^{-1}\ball$. Also $ t_k \downarrow 0$  as $k \to +\infty$.  Since both $X$ and $Y$ are finite-dimensional, we can assume that  $\{u_k\}$ converges  to  some $u \in X$ with $\|u\|=1$ and that $\{v_k\}$ converges to some $v \in \kappa^{-1} \ball$. By \eqref{eqbybx} we conclude that $v \in DF(\bx\for\by)(u)$.
Dividing  \eqref{xkkappa} by $t_k$ and taking the limit as $k \to + \infty$  we get $ \| u \| = 1 \geq \kappa \| v\|$.
Hence  $\| DF(\bx\for\by)^{-1} \| ^\plus \geq \kappa$, a contradiction.
\end{proof}

We will now consider dual space conditions for strong subregularity. Unless clearly indicated otherwise, we equip $X \times Y$ with the product (box) topology.
Given a set $\Omega \subset X$ and a point $\bx \in \Omega$,  the \emph{Fr\'echet normal cone} to $\Omega$ at  $\bx$, denoted by  $\widehat{N}_{\Omega}(\bx)$, is the set of all $x^* \in X^*$ such that for every $\varepsilon > 0$ there exits $\delta > 0$ such that
$$
  \langle x^*, x - \bx \rangle \leq \varepsilon \|x - \bx \| \quad \mbox{whenever} \quad x \in \Omega \cap \ball_{\delta}(\bx).
$$
For a mapping  $F : X \tto Y$  with $(\bx,\by) \in \gph F$,
the  \emph{Fr\'echet coderivative}  of $F$ at $(\bx,\by)$ acts from $  Y^* $ to the subsets of $X^*$ and is defined as
$$
Y^* \ni y^* \mapsto  \widehat{D}^*F(\bx\for\by)(y^*):= \left\{x^* \in  X^* \mset \ (x^*,-y^*) \in  \widehat{N}_{\gph F} (\bx,\by) \right\}.
$$

We give next  coderivative conditions for strong subregularity:

\begin{theorem} \label{thmCoder}
Consider a mapping $F:X \tto Y$ with $(\bx, \by)\in\gph F$.  If $X$ is finite-dimensional, then
\begin{equation} \label{DFDN}
    \subreg (F;\bx\for\by)  \leq \|\widehat{D}^*F(\bx\for\by)^{-1}\|^-.
\end{equation}
If, in addition, $\gph F$ is locally convex at $(\bx,\by)$, meaning that $\gph F \cap W$ is convex for some neighborhood $W$ of $(\bx,\by)$ in $X \times Y$, then \eqref{DFDN} becomes an equality.
\end{theorem}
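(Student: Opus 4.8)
The plan is to prove the inequality \eqref{DFDN} first in the general (finite-dimensional $X$) case, and then to upgrade it to an equality under the local convexity assumption by establishing the reverse inequality. For the first part, I would argue by contradiction following the template of Theorem~\ref{thm4E1}: suppose $\subreg(F;\bx\for\by) > \|\widehat{D}^*F(\bx\for\by)^{-1}\|^-$, so there is a constant $\kappa$ with $\|\widehat{D}^*F(\bx\for\by)^{-1}\|^- < \kappa < \subreg(F;\bx\for\by)$. Failure of strong subregularity with constant $\kappa$ gives a sequence $(x_k,y_k)\in\gph F$ with $(x_k,y_k)\to(\bx,\by)$ and $\|x_k-\bx\| > \kappa\|y_k-\by\|$. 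Set $t_k=\|x_k-\bx\|$, $u_k=(x_k-\bx)/t_k$, $v_k=(y_k-\by)/t_k$; then $\|u_k\|=1$, $\|v_k\|<\kappa^{-1}$, and since $X$ is finite-dimensional we may pass to a subsequence with $u_k\to u$, $\|u\|=1$. The key step is to extract, for each $k$, a Fr\'echet normal $(x_k^*,-y_k^*)\in\widehat N_{\gph F}(x_k,y_k)$ that ``separates'' the displacement direction $(u_k,v_k)$ quantitatively: one wants $\langle x_k^*, u_k\rangle$ bounded below away from zero while $\|x_k^*\|$ stays controlled. This is where I would invoke the fuzzy/approximate extremal principle (or the smooth variational principle in the Asplund/finite-dimensional setting), applied to the problem of minimizing $\|x-\bx\| - \kappa\|y-\by\|$ over $\gph F$ near $(x_k,y_k)$, which is not attained at $(x_k,y_k)$ precisely because of the strict inequality — this yields, up to a small error $\varepsilon_k\downarrow 0$, a nearby point and a Fr\'echet normal whose $X^*$-component pairs with $u$ close to $1$ and whose $Y^*$-component has norm close to $\kappa^{-1}\|x_k^*\|$ or smaller. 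Passing to the limit (using finite-dimensionality of $X$ to get weak$^*$ = norm convergence of a bounded subsequence of the normalized $x_k^*$) produces $x^*\in\widehat D^*F(\bx\for\by)(y^*)$ with $\|x^*\|\le 1$ but $\|y^*\| < \kappa\|x^*\|$ essentially, hence $\|\widehat D^*F(\bx\for\by)^{-1}\|^- \ge \kappa$ — the contradiction. I would anticipate the careful bookkeeping of the error terms in the fuzzy principle to be the main technical obstacle, together with making sure the limiting normal is genuinely Fr\'echet (not merely limiting); local closedness of $\gph F$ near $(\bx,\by)$ may need to be assumed or derived, as it is implicit in the regular case.

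For the equality under local convexity, I would prove $\subreg(F;\bx\for\by) \ge \|\widehat D^*F(\bx\for\by)^{-1}\|^-$. Here the convexity of $\gph F\cap W$ lets us replace the Fr\'echet normal cone by the normal cone of convex analysis, and crucially it provides a global (within $W$) separation rather than an approximate local one. The plan is: assume $F$ is not strongly subregular, get the same sequence $(x_k,y_k)$, $u_k\to u$ with $\|u\|=1$ and $v_k\to 0$ (now using $\|x_k-\bx\| > k\|y_k-\by\|$, so $v_k\to 0$), and note $0\in DF(\bx\for\by)(u)$, i.e. $(u,0)\in T_{\gph F}(\bx,\by)$. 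By convexity the tangent cone is the closure of the cone generated by $\gph F - (\bx,\by)$, and its polar is exactly $\widehat N_{\gph F}(\bx,\by)$; so for any $(x^*,-y^*)\in\widehat N_{\gph F}(\bx,\by)$ we get $\langle x^*,u\rangle - \langle y^*,0\rangle \le 0$, i.e. $\langle x^*,u\rangle\le 0$ for all such $x^*$. This forces $x^* = 0$ whenever $x^*\in\widehat D^*F(\bx\for\by)(y^*)$ for some $y^*$ (applying it to $\pm$ directions; more precisely, one shows $u$ can be chosen so that the only normals compatible with it are zero), which makes $\|\widehat D^*F(\bx\for\by)^{-1}\|^- = +\infty$ — matching $\subreg = +\infty$. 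Conversely, if $F$ \emph{is} strongly subregular with modulus $\kappa$, one produces, for each unit $u$, a point $x$ close to $\bx$ with $\|x-\bx\|$ controlled and then a convex-separating functional certifying $\|\widehat D^*F(\bx\for\by)^{-1}\|^- \le \kappa$; I would model this on the proof of the dual characterization in \cite[Theorem~4E.1 / Section 4]{book} for metric regularity, specialized to the subregular (fixed $y=\by$) setting.

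Assembling the two inequalities gives the equality. The step I expect to be the genuine difficulty is the first one — extracting a Fr\'echet coderivative element in the limit in the nonconvex case — because without convexity one only has fuzzy separation and must control how the base point drifts and how the error terms vanish while keeping the normalization $\|x_k^*\|\approx 1$; finite-dimensionality of $X$ is what rescues the limiting argument, exactly as it does in Theorem~\ref{thm4E1}, but the dual estimate is more delicate than the tangential one because it runs ``the other direction'' (it bounds $\subreg$ from above rather than from below).
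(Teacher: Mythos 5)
Your first part contains the genuine gap, and it is not a matter of bookkeeping. First, the route itself cannot work: generating Fr\'echet normals at the nearby points $(x_k,y_k)$ by a fuzzy extremal or variational principle and then passing to the limit can only produce a \emph{limiting} normal at $(\bx,\by)$, not a Fr\'echet normal, and this obstacle is not repairable --- your scheme, run with limiting normals, would prove that finiteness of the inner norm of the inverse \emph{limiting} coderivative implies strong subregularity, which is false by the paper's own counterexample given right after the theorem: for $\gph F=\{(1/k,0): k\in\N\}\cup\{(0,0)\}$ one has $N_{\gph F}(0,0)=\reals^2$, so the limiting inner norm is finite, yet $F$ is not strongly subregular at $0$ for $0$. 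Second, even granting a Fr\'echet normal in the limit, your concluding step is logically reversed: exhibiting one pair with $(x^*,-y^*)\in\widehat{N}_{\gph F}(\bx,\by)$, $\|x^*\|\le 1$ and $\|y^*\|<\kappa\|x^*\|$ only gives the \emph{upper} bound $d\big(0,\widehat{D}^*F(\bx\for\by)^{-1}(x^*/\|x^*\|)\big)<\kappa$ for that particular $x^*$; it does not make the supremum over the dual unit ball exceed $\kappa$, so no contradiction with $\kappa>\|\widehat{D}^*F(\bx\for\by)^{-1}\|^-$ is reached. The hypothesis must be used in the opposite direction: $\kappa>\|\widehat{D}^*F(\bx\for\by)^{-1}\|^-$ supplies, for \emph{every} unit $x^*$, a Fr\'echet normal $(x^*,-y^*)$ at the reference point with $\|y^*\|\le\kappa$, and one tests the defining inequality of $\widehat{N}_{\gph F}(\bx,\by)$ along points near $\bx$. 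That is what the paper does: it first shows, for each unit $x^*$ and each $\gamma\in(0,1)$, that $\langle x^*,x-\bx\rangle\le\kappa\, d(\by,F(x))+\gamma\|x-\bx\|$ near $\bx$ (by contradiction against the Fr\'echet normal inequality at $(\bx,\by)$, using the $y^*$ supplied by the hypothesis), and then, normalizing a minimizing sequence for the displacement rate $|F|^{\downarrow}(\bx\for\by)$ and choosing by Hahn--Banach a unit functional aligned with the limit direction (this is where finite-dimensionality of $X$ enters), deduces $|F|^{\downarrow}(\bx\for\by)\ge 1/\kappa$ and concludes via \eqref{eqsubdisp}. No extremal principle, no normals at points other than $(\bx,\by)$, and no closedness of $\gph F$ are needed.

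Concerning the equality under local convexity: the inequality you set out to prove, $\subreg(F;\bx\for\by)\ge\|\widehat{D}^*F(\bx\for\by)^{-1}\|^-$, is trivial when $F$ is not strongly subregular (its left side is $+\infty$), so your case (a) is superfluous; it is also incorrect as argued, since $\langle x^*,u\rangle\le 0$ for a single tangent direction $u$ does not force $x^*=0$, and $-u$ need not be tangent. The substantive content is your case (b), which you only sketch. The mechanism the paper uses there deserves to be named: for $\kappa>\subreg(F;\bx\for\by)$ and $\Omega:=\gph F\cap(\ball_\delta(\bx)\times\ball_\delta(\by))$ convex, every $x^*$ in the dual unit ball makes $(x^*,0)$ a subgradient at $(\bx,\by)$ of the sum of the convex function $(x,y)\mapsto\kappa\|y-\by\|$ and the indicator function of $\Omega$; the convex sum rule then writes $(x^*,0)=(x^*,-y^*)+(0,y^*)$ with $(x^*,-y^*)\in N_\Omega(\bx,\by)=\widehat{N}_{\gph F}(\bx,\by)$ and $\|y^*\|\le\kappa$, which is precisely $\|\widehat{D}^*F(\bx\for\by)^{-1}\|^-\le\kappa$. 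Without this (or an equivalent separation) step spelled out, the second half of your proposal is incomplete as well.
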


\begin{proof}
If either the right-hand side of \eqref{DFDN} is infinite or $\bx$ is an isolated point of $\dom F$ (implying that the left-hand side of \eqref{DFDN} is zero) then we are done. Suppose that this is not the case, and fix any $\kappa > \|\widehat{D}^*F(\bx\for\by)^{-1}\|^-$.

First, we show that
\begin{equation} \label{eqliminf}
  \liminf_{x\to\bx, x \neq \bx } \frac{d(\by,F(x))-\langle x^*,x- \bx \rangle}{\|x-\bx\|}\ge0 \quad \mbox{for all} \quad  x^* \in\kappa^{-1}\ball.
\end{equation}
To obtain \eqref{eqliminf}, it is sufficient to show that, given $x^* \in X^*$ with $\|x^*\| \leq 1$,  for each $\gamma \in (0, 1)$ there is  a constant $\delta=\delta(x^*, \gamma) > 0$ such that
\begin{equation} \label{eqSubDif}
  \langle x^*, x - \bx \rangle \leq  \kappa d(\by, F(x)) + \gamma \|x - \bx\| \quad \mbox{whenever} \quad x\in\ball_\delta(\bx).
\end{equation}
Assume on the contrary that there are $x^* \in X^*$ with $\|x^*\| \leq 1$ and $\gamma \in (0,1)$ along with a sequence $\{x_k\}$ converging to $\bx$ such that
$$
   \langle x^*, x_k - \bx \rangle >  \kappa d(\by, F(x_k)) + \gamma \|x_k - \bx\| \quad \mbox{for each} \quad k \in \N.
$$
For each  $k \in \N$, choose a point $y_k\in F(x_k)$ such that
\begin{equation} \label{eqxsbx}
   \langle x^*, x_k - \bx \rangle >  \kappa \|y_k - \by \| + \gamma \|x_k - \bx\|;
\end{equation}
this means  in particular that
\begin{equation} \label{eqykby}
   \kappa \|y_k - \by \| < (1 - \gamma) \|x_k - \bx\|.
\end{equation}
The choice of $\kappa$  implies that there is
$y^* \in \widehat{D}^*F(\bx\for\by)^{-1}(x^*)$ with $\|y^*\| \leq \kappa$. Hence, we have
$(x^*, - y^*) \in \widehat{N}_{\gph F} (\bx, \by)$. Let
$$
 \varepsilon := \gamma \min \{1,\kappa/(1 - \gamma)\}.
$$
Observe  that \eqref{eqykby} implies that $\{y_k\}$ converges to $\by$ and
$$
 \varepsilon \max\{\|x_k - \bx\|, \|y_k - \by\|\} \leq \gamma \|x_k - \bx\|
 \quad \mbox{whenever} \quad k \in \N.
$$
For each $k \in \N$, using \eqref{eqxsbx}, we obtain
\begin{eqnarray*}
  \langle x^*,x_k- \bx \rangle -\langle y^*,y_k-\by \rangle
 &\geq &  \langle x^*,x_k- \bx \rangle -\kappa\|y_k-\by\| \\
 &>& \gamma \|x_k - \bx\| \ge \varepsilon \max\{\|x_k - \bx\|, \|y_k - \by\|\}.
\end{eqnarray*}
Thus $(x^*, - y^*) \notin \widehat{N}_{\gph F} (\bx, \by)$, a contradiction. We proved that \eqref{eqSubDif} holds, and consequently so does \eqref{eqliminf}.

Second, we show that  \eqref{eqliminf} implies that $|F|^{\downarrow}(\bx \for \by) \geq 1/\kappa$.
Indeed, let $\{x_k\}$ be any sequence in $X\setminus \{ \bx \}$ converging to $\bx$ such that
$$
  \lim_{k \to + \infty} \frac{d(\by,F(x_k))}{\|x_k - \bx\|} = |F|^{\downarrow}(\bx \for \by).
$$
Let $u_k := (x_k - \bx)/\|x_k - \bx\|$, $k \in \N$. By Hahn-Banach theorem, for each $k \in \N$, there is $u^*_k \in X^*$ with $\|u^*_k\| = 1$ such that
$ \langle u^*_k, u_k \rangle = 1$. Going to subsequences, if necessary, we may assume that $\{u_k\}$ converges to some $u \in X$ with $\|u\| = 1$ and  that $\{u_k^*\}$ converges to some $u^* \in X^*$ with $\|u^*\| = 1$. Then
\begin{eqnarray*}
  0 \leq  |\langle u^*, u_k \rangle - 1 |  = |\langle u^*, u_k \rangle - \langle u^*_k, u_k \rangle | & \leq  &  \|u^* - u^*_k \|   \longrightarrow 0 \quad \mbox{as} \quad k \to + \infty.
\end{eqnarray*}
Let $x^* := u^*/\kappa$.
  Then \eqref{eqliminf} implies that
$$
 \lim_{k \to + \infty} \frac{d(\by,F(x_k))}{\|x_k - \bx\|} \geq \liminf_{k \to +\infty}   \frac{d(\by,F(x_k))-\langle x^*,x_k- \bx \rangle}{\|x_k-\bx\|} + \lim_{k \to + \infty} \langle x^*,u_k\rangle \geq 0 +  \frac{1}{\kappa} = \frac{1}{\kappa}.
$$
By \eqref{eqsubdisp}, we have  $ \subreg (F;\bx\for\by)  = 1/|F|^{\downarrow}(\bx \for \by) \leq \kappa$.  Letting $\kappa \downarrow \|\widehat{D}^*F(\bx\for\by)^{-1}\|^-$, we get
\eqref{DFDN}.

Suppose now  that $\gph F$ is locally convex at $(\bx,\by)$. We will show the  inequality opposite to \eqref{DFDN}.   Fix an arbitrary $\kappa > \subreg (F;\bx\for\by)$ (if any). Then  there is $\delta > 0$ such that $\Omega:=\gph F \cap (\ball_\delta(\bx) \times \ball_\delta(\by))$ is convex and
$$
   \|x - \bx\| \leq   \kappa d\big(\by,F(x)\cap \ball_\delta(\by)\big) \quad \mbox{for each} \quad x \in \ball_{\delta}(\bx).
$$
Clearly, in this case  $N_{\Omega}(\bx,\by) = \widehat{N}_{\gph F} (\bx,\by)$, where $N_{\Omega}$ is the usual normal cone to $\Omega$ at $(\bx, \by)$ in sense of convex analysis.  For any $x^*$ from the dual ball of $X$, we have
$$
  \langle x^*,x-\bx \rangle \leq  \kappa \|y-\by \| \quad \mbox{whenever} \quad (x,y)\in \Omega,
$$
that is, $(x^*,0)$ is a subgradient at $(\bx,\by)$ of the sum of two convex functions on $ \Omega$: the continuous function $\Omega \ni (x,y)\mapsto \kappa\|y-\by\|$ and the indicator function of the set $\Omega$, which is convex but not necessarily closed.
 Applying the convex sum rule \cite[Theorem 3.39]{JPP}, we get
$$\ball_{X^*} \times \{0\} \subset  N_{\Omega }(\bx,\by)+\{0\}\times (\kappa \ball_{Y^*}).$$
Hence for any  $x^* \in  X^*$ with $\|x^*\| \leq 1$ there is $y^* \in [ D^*F(\bx\for\by)]^{-1}(x^*)$ with $\|y^*\| \leq \kappa$.  Thus
$\|\widehat{D}^*F(\bx\for\by)^{-1}\|^- \leq \kappa$. Letting $\kappa \downarrow \subreg (F;\bx\for\by)$ we get the desired inequality.
\end{proof}

Note that inequality \eqref{DFDN} in Theorem~\ref{thmCoder} may be strict
rather often.
For instance, if the normal cone $\widehat{N}_{\gph F} (\bx,\by)$ is trivial, then $\|\widehat{D}^*F(\bx\for\by)^{-1}\|^-=+\infty$.
Take, {for example},
$F: \reals \tto \reals$ defined by $F(x) = \{x,-x\}$, $x \in \reals$. Then $\|DF(0\for 0 )^{-1}\|^\plus = \subreg (F; 0 \for 0) = 1$ while $\|\widehat{D}^*F(0\for0)^{-1}\|^- = +\infty$.  This particular example was also mentioned in the introduction to  illustrate the differences among the regularity properties for set-valued mappings.

Suppose that $X$ is finite-dimensional. Combining Theorem~\ref{thmCoder} and Theorem~\ref{thm4E1}, we get that for any $F: X \tto Y$ with $(\bx,\by) \in \gph F,$
$$
   \| DF(\bx\for\by)^{-1} \| ^\plus \leq \subreg (F;\bx\for\by) \leq \|\widehat{D}^*F(\bx\for\by)^{-1}\|^-.
$$
For any two positively homogeneous mappings $H_1$, $H_2 : Y \tto X$ such that $\gph H_1 \subset \gph H_2$ we have $\|H_2 \|^- \leq  \|H_1 \|^-$. Hence one
could
expect that taking a coderivative of $F$ at $(\bx,\by)$ based on a bigger normal cone than the Fr\'echet one we can achieve that its inner norm equals to
$\|DF(\bx\for \by )^{-1}\|^\plus$ and, therefore to  $\subreg (F; \bx \for \by)$.  In finite dimensions, a candidate for that to happen
could be
the \emph{limiting coderivative} ${D}^*F(\bx\for \by) :  \reals^m \tto \reals^n$ with values
$$
 {D}^*F(\bx\for\by)(q):= \{p \in  \reals^n \mset \ (p,-q) \in  N_{\gph F} (\bx,\by) \}, \quad  q \in \reals^m,
$$
where the \emph{limiting normal cone} $N_\Omega(\bz)$ to $\Omega \subset \reals^d$ at  $\bz \in \Omega$ is a collection of vectors
$w \in \reals^d$ such that there are sequences $\{w_k\}$ in $\reals^d$ and $\{z_k\}$ in $\Omega$ converging to $w$ and $\bz$, respectively, such that $w_k \in \widehat{N}_{\Omega}(z_k)$ for each $k \in \N$. However, the limiting coderivative cannot provide a criterion for strong subregularity, in general. As a counterexample, let $F :\reals \tto \reals $ be defined by $\gph F = \{ (1/k, 0): k \in \N \} \cup \{ (0,0)\}$.
Then $F$ is not strongly subregular at $0$ for $0$ and $\|DF(0\for 0 )^{-1}\|^\plus = \|\widehat{D}^*F(0\for0)^{-1}\|^- = \subreg (F; 0 \for 0)   = +\infty$, but $N_{\gph F}(0,0) = \reals^2$ which means that $\|{D}^*F(0\for0)^{-1}\|^-$ is finite.

Given $\varrho > 0$, we consider an equivalent norm in the product space $X \times Y$ defined by
$$
 \|(u,v)\|_\varrho := \max \{ \|u\|, \varrho \|v\|\}, \quad (u,v) \in X \times Y.
$$
Now we present a necessary and sufficient condition for strong subregularity similar to the statement by Fabian and Preiss \cite{fp}  guaranteeing that a set-valued mapping is open with a linear rate at a reference point. Note that this statement was  proved independently by Ioffe \cite{i2000} who showed that it implies  openness with a linear rate  around the reference point.

\begin{theorem} \label{thmCrit}
Consider a mapping $F:X \tto Y$ the graph of which is locally closed at $(\bx, \by)\in\gph F$.  Then $|F|^{\downarrow}(\bx \for \by)$ equals to the supremum
of  $\tau > 0$ for which there exists  $\varrho>0$ such that for any $(x,y)\in\gph F$ with $0< \|x - \bx\|<\varrho$ and $\|y-\by\|<\varrho$, one can find a point $(u,v)\in\gph F\setminus\{(x,y)\}$ satisfying
\begin{equation}\label{eqCrit}
 \|y - \by \|- \|v-\by\|>\tau \|(u,v) - (x,y)\|_\varrho.
\end{equation}
\end{theorem}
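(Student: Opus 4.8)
The plan is to write $T$ for the supremum appearing in the statement (with the convention $\sup\emptyset=0$) and to establish separately $T\ge|F|^{\downarrow}(\bx\for\by)$ and $T\le|F|^{\downarrow}(\bx\for\by)$. The degenerate case in which $\bx$ is isolated in $\dom F$ is immediate: there $|F|^{\downarrow}(\bx\for\by)=+\infty$ by convention, and for every small $\varrho$ there is no $(x,y)\in\gph F$ with $0<\|x-\bx\|<\varrho$, so the condition in \eqref{eqCrit} holds vacuously for every $\tau>0$ and $T=+\infty$ as well. Assume henceforth that $\bx$ is not isolated in $\dom F$.

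For $T\ge|F|^{\downarrow}(\bx\for\by)$, fix $\tau<|F|^{\downarrow}(\bx\for\by)$, pick $\tau_1\in(\tau,|F|^{\downarrow}(\bx\for\by))$, and use the definition of the steepest displacement rate to get $\varrho>0$ with $\tau\varrho<1$ and $d(\by,F(x))>\tau_1\|x-\bx\|>\tau\|x-\bx\|$ whenever $0<\|x-\bx\|<\varrho$. For any $(x,y)\in\gph F$ with $0<\|x-\bx\|<\varrho$ and $\|y-\by\|<\varrho$ take $(u,v):=(\bx,\by)\in\gph F\setminus\{(x,y)\}$: since $\|y-\by\|\ge d(\by,F(x))>\tau\|x-\bx\|$ (which forces $\|y-\by\|>0$) and $\|y-\by\|>\tau\varrho\|y-\by\|$, we get $\|y-\by\|-\|v-\by\|=\|y-\by\|>\tau\max\{\|x-\bx\|,\varrho\|y-\by\|\}=\tau\|(u,v)-(x,y)\|_\varrho$. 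Hence every such $\tau$ is admissible in \eqref{eqCrit}; note that this direction does not use local closedness.

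For the reverse inequality, suppose \eqref{eqCrit} holds with some $\tau>0$ and $\varrho>0$; I will show $|F|^{\downarrow}(\bx\for\by)\ge\tau$. Shrinking $\varrho$ keeps \eqref{eqCrit} valid for the same $\tau$ (because $\|\cdot\|_{\varrho'}\le\|\cdot\|_\varrho$ for $\varrho'\le\varrho$), so we may assume $\ball_\varrho(\bx)\times\ball_\varrho(\by)$ lies in a neighbourhood of $(\bx,\by)$ on which $\gph F$ is closed; then $M:=\gph F\cap(\ball_\varrho(\bx)\times\ball_\varrho(\by))$, with the metric induced by $\|\cdot\|_\varrho$, is a nonempty complete metric space containing $(\bx,\by)$, and $\phi(x,y):=\|y-\by\|$ is continuous, nonnegative, with $\inf_M\phi=0$. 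Applying \eqref{eqCrit} at a point $(x,\by)\in\gph F$ with $0<\|x-\bx\|<\varrho$ would give $(u,v)$ with $-\|v-\by\|>\tau\|(u,v)-(x,\by)\|_\varrho\ge0$, which is impossible, so $\by\notin F(x)$ for all such $x$. Now assume, for contradiction, $|F|^{\downarrow}(\bx\for\by)<\tau$; fix $\tau''\in(|F|^{\downarrow}(\bx\for\by),\tau)$ and choose $x_k\to\bx$, $x_k\ne\bx$, $x_k\in\ball_\varrho(\bx)$, with $d(\by,F(x_k))<\tau''\|x_k-\bx\|$; pick $y_k\in F(x_k)$ with $\|y_k-\by\|<\tau''\|x_k-\bx\|$, where $\|y_k-\by\|>0$ by the previous remark. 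Fix $k$ so large that $\|x_k-\bx\|<\varrho/3$ and $(x_k,y_k)\in M$, and apply Ekeland's variational principle to $\phi$ on $M$ at the base point $(x_k,y_k)$ with $\varepsilon:=\|y_k-\by\|$ and $\lambda:=\|y_k-\by\|/\tau$. This produces $(\bar x,\bar y)\in M$ with $\|\bar y-\by\|\le\|y_k-\by\|$, $\|(\bar x,\bar y)-(x_k,y_k)\|_\varrho\le\|y_k-\by\|/\tau$, and
$$ \|y-\by\|>\|\bar y-\by\|-\tau\|(x,y)-(\bar x,\bar y)\|_\varrho \quad\mbox{for all}\quad (x,y)\in M\setminus\{(\bar x,\bar y)\}. $$
Three checks finish the argument. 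First, $\bar x\ne\bx$: if $\bar x=\bx$ then $\|(\bar x,\bar y)-(x_k,y_k)\|_\varrho\ge\|x_k-\bx\|$, contradicting $\|(\bar x,\bar y)-(x_k,y_k)\|_\varrho\le\|y_k-\by\|/\tau<(\tau''/\tau)\|x_k-\bx\|<\|x_k-\bx\|$. Second, $\|\bar y-\by\|\le\|y_k-\by\|<\varrho$ and $\|\bar x-\bx\|\le\|(\bar x,\bar y)-(x_k,y_k)\|_\varrho+\|x_k-\bx\|<2\|x_k-\bx\|<\varrho$, so \eqref{eqCrit} applies at $(\bar x,\bar y)$ and yields $(u,v)\in\gph F\setminus\{(\bar x,\bar y)\}$ with $\|\bar y-\by\|-\|v-\by\|>\tau\|(u,v)-(\bar x,\bar y)\|_\varrho$. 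Third, $(u,v)\in M$: from this last inequality $\|v-\by\|<\|\bar y-\by\|<\varrho$ and $\|(u,v)-(\bar x,\bar y)\|_\varrho<\|y_k-\by\|/\tau<\|x_k-\bx\|$, hence $\|u-\bx\|\le\|(u,v)-(\bar x,\bar y)\|_\varrho+\|\bar x-\bx\|<3\|x_k-\bx\|<\varrho$. Substituting $(u,v)$ into the displayed Ekeland inequality gives $\|\bar y-\by\|-\|v-\by\|<\tau\|(u,v)-(\bar x,\bar y)\|_\varrho$, which contradicts the inequality furnished by \eqref{eqCrit}. Thus $|F|^{\downarrow}(\bx\for\by)\ge\tau$, and taking the supremum over admissible $\tau$ yields $T\le|F|^{\downarrow}(\bx\for\by)$; combining, $T=|F|^{\downarrow}(\bx\for\by)$.

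The main obstacle is this reverse inequality: one has to keep the Ekeland point $(\bar x,\bar y)$ and the point $(u,v)$ it spawns through \eqref{eqCrit} inside $\ball_\varrho(\bx)\times\ball_\varrho(\by)$ so that \eqref{eqCrit} remains usable, while simultaneously ruling out $\bar x=\bx$, without which \eqref{eqCrit} is not even applicable at the Ekeland point. The slack $\tau''<\tau$ between the contradiction hypothesis and the rate in \eqref{eqCrit}, together with the precise choice $\lambda=\|y_k-\by\|/\tau$ — large enough that the Ekeland inequality carries the exact coefficient $\tau$, yet smaller than $\|x_k-\bx\|$, which is what excludes $\bar x=\bx$ — is exactly what makes the balancing act work. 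Local closedness of $\gph F$ enters only through the completeness of $M$ needed to invoke Ekeland's principle; an explicit iteration producing a Cauchy sequence in $\gph F$ would do equally well in its place.
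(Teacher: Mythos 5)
Your proof is correct and follows essentially the same route as the paper's: the easy inequality is obtained by choosing $(u,v)=(\bx,\by)$, and the hard one by applying Ekeland's variational principle to $(u,v)\mapsto\|v-\by\|$ on a locally closed piece of $\gph F$ with the ratio $\varepsilon/\lambda=\tau$ and $\lambda$ strictly smaller than the distance from the base point to $\bx$ (which rules out the Ekeland point having first coordinate $\bx$). The only difference is bookkeeping: you shrink $\varrho$ so that the $\varrho$-ball itself carries the closedness and verify that the point furnished by \eqref{eqCrit} stays in $M$, whereas the paper works inside a fixed $\delta$-ball and disposes of points of $\gph F$ outside $M$ by a separate estimate.
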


\begin{proof}
 Denote by $s$ the supremum from the statement and let $\ell :=|F|^{\downarrow}(\bx \for \by)$.

 First, we show that $\ell \leq s$. If $\ell =0$, the inequality holds  trivially. If not then fix any $\tau \in (0, \ell)$.
Find $\varrho\in (0,1/\tau)$ such that
\begin{equation}\label{3arb}
 \|y - \by\|> \tau \|x - \bx\| \quad \mbox{whenever} \quad x\in \ball_\varrho(\bar{x})\setminus\{\bx\} \quad  \mbox{and} \quad  y\in F(x).
\end{equation}
Fix an arbitrary $(x,y)\in\gph F$ with $0<\|x - \bx\|<\varrho$ and $\|y - \by\|<\varrho$.
Then $(u,v) := (\bx, \by)$ is distinct from $(x,y)$ and \eqref{3arb} implies that
$$
  \|y - \by \| >   \tau \|x - \bx\| = \tau \|u- x\| > 0.
$$
Hence $y \neq \by$. As $ \tau \varrho < 1$, we have  $\|y - \by \| > \tau \varrho  \|y - \by \| = \tau \varrho \|v - y\|$.   Noting that $\|y - \by \|- \|v-\by\| = \|y - \by \|$, we arrive at \eqref{eqCrit}.  Thus $ s\ge \tau $.
The claimed inequality follows after letting $\tau \uparrow\ell$.

To show that $\ell = s$, assume on the contrary that $\ell < s$. Choose $\delta\in (0,1)$ such that the set $M:=\gph F\cap(\ball_{\delta}(\bx) \times \ball_{\delta}(\by))$ is closed in $X \times Y$.
Fix any $\tau \in (\ell, s)$ and then pick $\tau' \in (\ell, \tau)$. Let $\varrho \in (0,\delta/2)$ be arbitrary, and set
\begin{gather}\label{3ch-1b}
\eta:=\min\{\varrho/2,\varrho\delta/2,\varrho/\tau,\delta/4\}.
\end{gather}
As $\tau' > \ell$, there is $z\in\ball_{\eta}(\bx)$ different from $\bx$ and $w\in F(z)$ such that
\begin{equation}\label{3ch0b}
\| w - \by \| <\tau' \|z - \bx\|.
\end{equation}
Consider a function $(u,v)\mapsto \|v - \by \|$ on a complete metric space $(M,\|\cdot \|_\varrho)$.
Applying to this function the Ekeland variational principle \cite[Theorem 7.1.2]{BL}
with
\begin{gather}\label{3ch1-b}
  \eps:= \tau ' \|z - \bx\| > 0  \quad \mbox{and} \quad  \lambda:=\frac{\tau'}{\tau}\|z - \bx\| > 0,
\end{gather}
we find a point $(x,y)\in M $ such that
\begin{gather}\label{3ch1b}
\|(x,y) - (z,w)\|_\varrho\le\lambda,\quad
\|y - \by\| \le \| w - \by\|,
\\\label{3ch2b}
 \| v - \by \| +(\eps/\lambda) \|(u,v) -(x,y) \|_\varrho\ge \| y - \by\| \quad \mbox{for all} \quad (u,v)\in M.
\end{gather}
 Using  \eqref{3ch-1b},  \eqref{3ch0b},  \eqref{3ch1-b}  and \eqref{3ch1b}  we have
\begin{eqnarray*}
 \|x - z \| &\le &\lambda<\|z - \bx\|,
\\
\|x - \bx\| &\ge & \|z - \bx\|-\|x - z\|  >0,
\\
\|x - \bx\| & \le & \|x - z\|+\|z - \bx\|< 2 \|z-\bx\|\le 2\eta
\le\min\{\varrho,\delta/2\},
\\
\|y - \by\| &\le & \|w - \by\|< \tau'  \|z - \bx\| < \tau \|z - \bx\| \le \tau \eta
\le\varrho< \delta/2.
\end{eqnarray*}
Thus we have $0< \| x - \bx\|<\varrho$ and $\|y - \by\|<\varrho$, and, as $\varrho < 1$,  also that
\begin{equation} \label{3lowb}
 \|(x,y) - (\bx,\by) \|_\varrho<\delta/2.
\end{equation}
Since \eqref{3ch1-b} means that $\varepsilon/\lambda = \tau$, from \eqref{3ch2b} we get
$$
 \|y - \by \| -\|v - \by\| \le \tau  \|(u,v) - (x,y)\|_\varrho \quad \mbox{for all} \quad (u,v)\in M.
$$
If $(u,v)\in \gph F \setminus M$, then, by \eqref{3lowb},
$$\|(u,v) - (x,y) \|_\varrho \geq \|(u,v) - (\bx,\by)\|_\varrho - \|(x,y) - (\bx,\by)\|_\varrho >\delta- \delta/2 = \delta/2,$$
which in combination with \eqref{3ch1b}, \eqref{3ch0b}, and \eqref{3ch-1b} implies that
\begin{eqnarray*}
 \|y -\by\| -\| v - \by\| &\le & \|y - \by\| \le \|w - \by\|< \tau \|z - \bx\| \le\tau \eta \le \tau \varrho\delta/2\\
&<&\tau \varrho \|(u,v)-(x,y)\|_\varrho  < \tau \|(u,v)-(x,y)\|_\varrho.
\end{eqnarray*}
Summarizing, we have shown that for every $\tau \in (\ell, s)$ and every $\varrho \in (0,\delta/2)$ there exists $(x,y)\in\gph F$ with $0< \|x - \bx\|<\varrho$ and $\|y-\by\|<\varrho$ such that no point $(u,v)\in \gph F $ can satisfy \eqref{eqCrit}. Hence $s$ cannot be strictly greater than $\ell$, a contradiction.
\end{proof}

We immediately get a statement characterizing strong subregularity  via
local and nonlocal \emph{slopes/rates of descent}.

\begin{corollary}\label{C2.1.17}
Consider a mapping $F:X \tto Y$ the graph of which is locally closed at $(\bx, \by)\in\gph F$. Then
$F$ is strongly subregular at $\bx$ for $\by$ if and only if
\begin{equation} \label{limcritSub}
\lim_{\varrho\downarrow0}
\inf_{\substack{(x,y)\in\gph F
\\
0<\|x - \bx\|<\varrho,\,\|y - \by\|<\varrho}}\,
\sup_{\substack{(u,v)\in\gph F\setminus\{(x,y)\}}}
\frac{\|y - \by\|-\|v - \by\|}{ \|(u,v) - (x,y)\|_\varrho}>0.
\end{equation}
Moreover, the limit in \eqref{limcritSub} coincides with $(\subreg(F;\bx\for\by))^{-1}$.
\end{corollary}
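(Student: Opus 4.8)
The plan is to deduce the statement directly from Theorem~\ref{thmCrit} and the identity \eqref{eqsubdisp}, so that essentially no new work beyond careful bookkeeping of quantifiers is needed. Set $\ell := |F|^{\downarrow}(\bx\for\by)$, let $L$ denote the value of the limit in \eqref{limcritSub}, and for $\varrho > 0$ write
$$
 \psi(\varrho) := \inf_{\substack{(x,y)\in\gph F\\ 0<\|x-\bx\|<\varrho,\ \|y-\by\|<\varrho}}\ \sup_{(u,v)\in\gph F\setminus\{(x,y)\}} \frac{\|y-\by\|-\|v-\by\|}{\|(u,v)-(x,y)\|_\varrho},
$$
so that $L = \lim_{\varrho\downarrow0}\psi(\varrho)$. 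Since $F$ is strongly subregular at $\bx$ for $\by$ exactly when $\ell>0$, and since \eqref{eqsubdisp} gives $\ell = (\subreg(F;\bx\for\by))^{-1}$ with the conventions $(+\infty)^{-1}=0$ and $0^{-1}=+\infty$ built into $0\cdot(+\infty)=1$, the whole corollary reduces to the single identity $L=\ell$. I would begin by checking that $\psi$ is nonincreasing in $\varrho$: as $\varrho$ decreases the admissible set of pairs $(x,y)$ shrinks while, because $\|\cdot\|_{\varrho_1}\le\|\cdot\|_{\varrho_2}$ for $\varrho_1\le\varrho_2$, every fraction with positive numerator can only grow, and fractions with nonpositive numerator are irrelevant to the inner supremum since the always-admissible choice $(u,v)=(\bx,\by)$ already contributes a nonnegative value; hence $\psi(\varrho)\ge0$ and $L=\sup_{\varrho>0}\psi(\varrho)\in[0,+\infty]$ is well defined.

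For the inequality $L\ge\ell$ I would assume $\ell>0$, fix an arbitrary $\tau\in(0,\ell)$, use Theorem~\ref{thmCrit} to produce $\varrho>0$ such that every $(x,y)\in\gph F$ with $0<\|x-\bx\|<\varrho$ and $\|y-\by\|<\varrho$ admits some $(u,v)\in\gph F\setminus\{(x,y)\}$ satisfying \eqref{eqCrit}, observe that this says precisely that the inner supremum in $\psi(\varrho)$ exceeds $\tau$ for every such $(x,y)$, hence $\psi(\varrho)\ge\tau$ and $L\ge\tau$, and then let $\tau\uparrow\ell$. For the reverse inequality I would argue by contradiction: if $\ell<\tau<L$ for some $\tau$, then by the monotonicity just established there is $\varrho>0$ with $\psi(\varrho)>\tau$, so for every admissible $(x,y)$ the inner supremum exceeds $\tau$ and one can select $(u,v)\in\gph F\setminus\{(x,y)\}$ with $\|y-\by\|-\|v-\by\|>\tau\|(u,v)-(x,y)\|_\varrho$; this exhibits $\tau$ as a member of the set of positive reals whose supremum, by Theorem~\ref{thmCrit}, equals $\ell$, forcing $\tau\le\ell$, a contradiction. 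Thus $L=\ell$, and combining with \eqref{eqsubdisp} yields both that the limit in \eqref{limcritSub} equals $(\subreg(F;\bx\for\by))^{-1}$ and the claimed equivalence between strong subregularity of $F$ at $\bx$ for $\by$ and the positivity of that limit.

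The only step that genuinely requires attention is the passage between the ``$\forall\,(x,y)\ \exists\,(u,v)$'' phrasing of Theorem~\ref{thmCrit} and the $\inf$--$\sup$ quantity $\psi(\varrho)$, where one must keep strict versus nonstrict inequalities straight (translating ``for each $(x,y)$ some $(u,v)$ beats $\tau$'' into ``the inner sup is $>\tau$'' and then into ``$\psi(\varrho)\ge\tau$''). The degenerate cases --- when $\bx$ is an isolated point of $\dom F$, so that $\psi(\varrho)=+\infty$ for all small $\varrho$, and when $\ell=0$ --- are then absorbed automatically by the conventions $0\cdot(+\infty)=(+\infty)\cdot0=1$ already in force and need no separate discussion.
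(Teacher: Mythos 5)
Your argument is correct and follows exactly the route the paper intends: the corollary is stated there as an immediate consequence of Theorem~\ref{thmCrit} together with \eqref{eqsubdisp}, and your write-up simply makes the implicit bookkeeping explicit (monotonicity of the inf--sup quantity in $\varrho$, the translation between ``for each $(x,y)$ some $(u,v)$ beats $\tau$'' and the value of the inner supremum, and the degenerate cases). Nothing is missing; the identification of the limit with $|F|^{\downarrow}(\bx\for\by)$ and hence with $(\subreg(F;\bx\for\by))^{-1}$ is exactly what the paper relies on.
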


The limit \eqref{limcritSub} is taken in the product space $X\times Y$ and involves all points $(x,y)\in\gph F$ near $(\bx,\by)$ excluding those with $x=\bx$ (external points).
At every such point a kind of (nonlocal) descent rate is computed for the distance from $y$ to $\by$ and can be underestimated by the corresponding easier to compute infinitesimal quantities:
\begin{gather}\label{2.1.17}
\sup_{\substack{(u,v)\in\gph F\setminus\{(x,y)\}}}
\frac{\|y - \by\|-\|v - \by\|}{ \|(u,v) - (x,y)\|_\varrho}\ge \limsup_{\substack{(u,v)\to(x,y)\\(u,v)\in\gph F\setminus\{(x,y)\}}}
\frac{\|y - \by\|-\|v - \by\|}{ \|(u,v) - (x,y)\|_\varrho}.
\end{gather}
By analogy with the \emph{strong slope} by De Giorgi, Marino, and Tosques \cite{DMT},
the quantity on the right-hand side  of \eqref{2.1.17} can be interpreted as a kind of \emph{slope} of  $F$ at $(x,y)\in\gph F$ (cf. \cite{Kru15}).
It is easy to check that, when $\gph F$ is convex, \eqref{2.1.17} holds as equality.

\section{The Newton method}

We  study the Newton method for solving  the generalized equation
\be\label{geneq}
     \text{find} x \text{such that} f(x) + F(x) \ni 0,
\ee
 where both $X$ and $Y$ are Banach spaces,  $f:{X}\to {Y}$ is a function, and  $F:X\tto Y$ is a
 set-valued mapping. Provided that $f$ is
Fr\'echet differentiable,
the Newton  iteration applied to \eqref{geneq}  has the form
\be\label{New}
f(x_k)+ Df(x_k)(x_{k+1}-x_k) + F(x_{k+1})\ni 0, {\quad k=0,1,2, \dots, \quad x_0 \in X \mbox{ given}.}
\ee
In \cite[Chapter 6]{book} several results are presented  regarding the method \eqref{New} under (strong) metric (sub)regularity. In the following subsections we extend some of these results and add new ones.

\subsection{Convergence}\label{con}

The following theorem reveals the mode of convergence of the iteration \eqref{New} under strong subregularity of
the mapping in \eqref{geneq}. It improves \cite[Theorem 6E.2]{book}.

\begin{theorem}\label{N}   Suppose that
the function $f$ is   Fr\'echet differentiable around a solution $\bx$ of \eqref{geneq}  and the derivative mapping $Df$ is continuous at $\bx$. Also suppose that the mapping $f+F$  is strongly subregular at $\bx$ for $0$.
Then  there exists a neighborhood $O$ of $\bx$ such that if a  sequence $\{x_k\}$  is generated by the Newton method \eqref{New}
and  has a tail $\{x_k\}_{k\geq k_0}$
with
$x_k\in O$ for all $k\geq k_0$, then $\{x_k\}$ is
  superlinearly convergent to $\bx$.
\end{theorem}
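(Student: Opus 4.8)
The plan is to freeze the derivative at each iterate, reinterpret the Newton step \eqref{New} as solving a perturbed inclusion of the form $-r_k\in G_{x_k}(x_{k+1})$ for a linearized mapping $G_{x_k}$ that inherits strong subregularity \emph{uniformly} in $x_k$, and then bound the iterate error by the linearization residual $r_k$, which is $o(\|x_k-\bx\|)$ by Fr\'echet differentiability of $f$ together with continuity of $Df$ at $\bx$.

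\emph{Step 1 (uniform subregularity of the frozen-derivative mappings).} Put $h:=f(\bx)+Df(\bx)(\cdot-\bx)$. Since $f$ is Fr\'echet differentiable at $\bx$, $\clm(f-h;\bx)=0$, so the second part of Corollary~\ref{prop1} (with base point $\by:=-f(\bx)\in F(\bx)$, which makes $f(\bx)+\by=h(\bx)+\by=0$) gives that $h+F$ is strongly subregular at $\bx$ for $0$ with $\subreg(h+F;\bx\for0)=\subreg(f+F;\bx\for0)$. Fix $\kappa>\subreg(f+F;\bx\for0)$; by \eqref{sr1} there is $a_0>0$ with $\|x-\bx\|\le\kappa\,d(0,(h+F)(x))$ for $x\in\ball_{a_0}(\bx)$. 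For $u\in X$ write $B_u:=Df(u)-Df(\bx)\in{\mathcal L}(X,Y)$; the function $x\mapsto B_u(x-\bx)$ vanishes at $\bx$ and is calm at $\bx$ with constant $\|B_u\|$ and an arbitrary neighborhood. By continuity of $Df$ at $\bx$ pick $a\in(0,a_0]$ with $\kappa\|B_u\|<1/2$ whenever $u\in\ball_a(\bx)$. Applying Theorem~\ref{main} with $G=h+F$, $g(\cdot)=B_u(\cdot-\bx)$, $\by=0$ and common neighborhood $\ball_a(\bx)$ shows, for every $u\in\ball_a(\bx)$, that
$$
G_u:=f(\bx)+Df(u)(\cdot-\bx)+F=(h+F)+B_u(\cdot-\bx)
$$
is strongly subregular at $\bx$ for $0$ with constant at most $2\kappa$ and neighborhood $\ball_a(\bx)$; that is, $\|x-\bx\|\le 2\kappa\,d(0,G_u(x))$ for all $u,x\in\ball_a(\bx)$.

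\emph{Step 2 (residual estimate).} For $x_k$ near $\bx$ set $r_k:=f(x_k)-f(\bx)-Df(x_k)(x_k-\bx)$ and split
$$
r_k=\bigl(f(x_k)-f(\bx)-Df(\bx)(x_k-\bx)\bigr)+\bigl(Df(\bx)-Df(x_k)\bigr)(x_k-\bx).
$$
Fr\'echet differentiability of $f$ at $\bx$ and continuity of $Df$ at $\bx$ furnish a nondecreasing $\omega:[0,a]\to[0,+\infty)$ with $\omega(t)\downarrow0$ as $t\downarrow0$ and $\|r_k\|\le\omega(\|x_k-\bx\|)\,\|x_k-\bx\|$.

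\emph{Step 3 (contraction and superlinear rate).} Shrink $a$ once more so that $2\kappa\,\omega(a)<1$, take $O:=\ball_a(\bx)$ (which we may assume lies in $\inT\dom f$), and suppose $\{x_k\}$ is generated by \eqref{New} with $x_k\in O$ for all $k\ge k_0$. For $k\ge k_0$, the Newton inclusion rearranges (adding $f(\bx)-f(x_k)+Df(x_k)(x_k-\bx)$ to $-f(x_k)-Df(x_k)(x_{k+1}-x_k)\in F(x_{k+1})$) to $-r_k\in G_{x_k}(x_{k+1})$, so $d(0,G_{x_k}(x_{k+1}))\le\|r_k\|$. Since $x_k,x_{k+1}\in O$, Step 1 (with $u=x_k$, $x=x_{k+1}$) and Step 2 give
$$
\|x_{k+1}-\bx\|\le 2\kappa\,d\bigl(0,G_{x_k}(x_{k+1})\bigr)\le 2\kappa\|r_k\|\le 2\kappa\,\omega(\|x_k-\bx\|)\,\|x_k-\bx\|\le 2\kappa\,\omega(a)\,\|x_k-\bx\|.
$$
Hence $\{\|x_k-\bx\|\}_{k\ge k_0}$ decays geometrically and $x_k\to\bx$; if $x_k=\bx$ for some $k\ge k_0$ the same inequality forces $x_j=\bx$ for all $j\ge k$ (finite termination). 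Otherwise $\|x_k-\bx\|>0$ for all $k\ge k_0$ and
$$
\frac{\|x_{k+1}-\bx\|}{\|x_k-\bx\|}\le 2\kappa\,\omega(\|x_k-\bx\|)\longrightarrow0,
$$
so $\{x_k\}$ converges superlinearly to $\bx$. The one delicate point is Step 1: producing a single subregularity constant and, crucially, a single neighborhood $\ball_a(\bx)$ that serve \emph{all} the frozen-derivative mappings $G_{x_k}$ simultaneously; this is exactly what the common-neighborhood form of Theorem~\ref{main} combined with continuity of $Df$ at $\bx$ (keeping $\|Df(x_k)-Df(\bx)\|$ uniformly small) delivers, after which Steps 2 and 3 are routine.
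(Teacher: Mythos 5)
Your proof is correct, but it takes a different route from the paper's own proof of Theorem~\ref{N}. The paper works directly with the strong subregularity of $f+F$ itself: from the Newton inclusion it extracts $f(x_{k+1})-f(x_k)-Df(x_k)(x_{k+1}-x_k)\in f(x_{k+1})+F(x_{k+1})$, bounds this residual by $\mu\|x_{k+1}-x_k\|$ using the uniform estimate \eqref{fufv} (valid near $\bx$ thanks to differentiability around $\bx$ and continuity of $Df$ at $\bx$), and then absorbs $\|x_{k+1}-x_k\|\le\|x_{k+1}-\bx\|+\|x_k-\bx\|$ to get first a linear and then, by shrinking $\mu$, a superlinear estimate; no linearized mappings or perturbation results are needed. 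You instead invoke subregularity of the frozen-derivative partial linearizations $G_u=f(\bx)+Df(u)(\cdot-\bx)+F$, uniformly in $u$ near $\bx$, which you establish via Corollary~\ref{prop1} and the common-neighborhood form of Theorem~\ref{main}, and you measure the residual $r_k=f(x_k)-f(\bx)-Df(x_k)(x_k-\bx)$ at $x_k$ only. This is essentially the strategy the paper itself uses for the H\"older/$q$-subregular extension (Theorem~\ref{rem}, via Theorem~\ref{thmFD}), specialized to $q=1$. The paper's argument is shorter because it skips the uniformization step entirely; yours pays for that step but yields a one-sided bound $\|x_{k+1}-\bx\|\le 2\kappa\,\omega(\|x_k-\bx\|)\,\|x_k-\bx\|$ with an explicit modulus, avoids the absorption trick, and transfers verbatim to higher-order subregularity. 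Your identification of the uniform constant and the common neighborhood $\ball_a(\bx)$ as the delicate point is exactly right, and your use of Theorem~\ref{main} (which preserves the neighborhood) together with continuity of $Df$ at $\bx$ handles it correctly.
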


\begin{proof}
The continuous differentiability of $f$ implies that for each $\mu > 0$ there is $\delta>0$ such that
\begin{equation} \label{fufv}
 \|f(v) - f(u) - Df(u)(v-u)\| \leq \mu \|u - v\| \quad \mbox{whenever} \quad u,v \in \ball_{\delta} (\bx).
\end{equation}
By the strong subregularity of $f+F$, there are positive constants $\kappa$ and $a$ such that
\begin{equation} \label{xbxd}
 \|x - \bx \| \leq \kappa d\big(0, f(x) + F(x)\big) \quad \mbox{for each} \quad x \in \ball_a(\bx).
\end{equation}
Let $\delta > 0$ be such that \eqref{fufv} holds with $\mu :=1/(3\kappa)$ and set $O=\ball_a(\bx)\cap\ball_\delta(\bx)$. Let $\{x_k\}$  be any sequence generated by the Newton method \eqref{New}
such that there is $k_0 \in \N$ such that  $x_k\in O$ for all $k \geq k_0$. For any $k \ge k_0$ we have $f(x_{k+1}) - f(x_k) - Df(x_k)(x_{k+1} - x_k) \in f(x_{k+1}) + F(x_{x+1})$ and thus
\begin{eqnarray*}
 \|x_{k+1} - \bx \| &\leq& \kappa d\big(0, f(x_{k+1}) + F(x_{k+1})\big) \leq \kappa \|f(x_{k+1}) - f(x_k) - Df(x_k)(x_{k+1} - x_k) \| \\
  &\leq& \frac{1}{3} \|x_{k+1} - x_k\| \leq  \frac{1}{3} \|x_{k+1} - \bx \| + \frac{1}{3} \|x_{k} - \bx \|.
\end{eqnarray*}
Therefore $\|x_{k+1} - \bx \| \leq 2^{-1} \|x_{k} - \bx \|$ for each $k \ge k_0$. Hence $x_k \to \bx$ as $k \to +\infty$. To see the rate of convergence, let $\varepsilon > 0$ be arbitrary. Find $r > 0$ such that $\ball_r(\bx) \subset O$ and \eqref{fufv} holds with $\mu= \varepsilon/(\kappa(1+\varepsilon))$ and $\delta=r$. Then there is $k_1 \in \N$ such that
$x_k \in \ball_r(\bx)$ whenever $k > k_1$. As above, for such an index $k$, we get
\begin{eqnarray*}
 \|x_{k+1} - \bx \| &\leq& \kappa d\big(0, f(x_{k+1}) + F(x_{k+1})\big) \leq \kappa \|f(x_{k+1}) - f(x_k) - Df(x_k)(x_{k+1} - x_k) \| \\
  &\leq& \frac{\varepsilon}{1 + \varepsilon} \|x_{k+1} - x_k\| \leq  \frac{\varepsilon}{1 + \varepsilon} \|x_{k+1} - \bx \| + \frac{\varepsilon}{1 + \varepsilon} \|x_{k} - \bx \|.
\end{eqnarray*}
Therefore for any $k > k_1$ we have $  \|x_{k+1} - \bx \| \leq \varepsilon \|x_{k} - \bx \|$. Hence $x_k \to \bx$ superlinearly.
\end{proof}

Clearly, the theorem above can be equivalently stated with the assumption that the entire sequence $\{x_k\}$ belongs to $O$; the statement we choose
adds some information which can be meaningful numerically.

Our next theorem extends the result just presented to the case of  strong $q$-regularity.

\begin{theorem}   \label{rem}  Assume that $Df$ is H\"older continuous around $\bx$ with an exponent $\alpha \in (0,1]$ and that $f + F$ is strongly   $q$-subregular at $\bx$ for $\by$ with $q \geq 1$. Then  there exists a neighborhood $O$ of $\bx$ such that if a  sequence $\{x_k\}$  is generated by the Newton method \eqref{New}
and  has a tail $\{x_k\}_{k\geq k_0}$
with
$x_k\in O$ for all $k\geq k_0$, then $\{x_k\}$ is
   convergent to $\bx$  with convergence rate  {$q(1+\alpha)$.}
   \end{theorem}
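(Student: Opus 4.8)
The plan is to mirror the structure of the proof of Theorem~\ref{N}, replacing the linear contraction estimate with one that exploits the two exponents $q$ (from strong $q$-subregularity) and $\alpha$ (from H\"older continuity of $Df$). First I would fix constants: since $f+F$ is strongly $q$-subregular at $\bx$ for $\by$ there are $\kappa>0$ and $a>0$ with $\|x-\bx\|\le\kappa\,d(\by,f(x)+F(x))^q$ for $x\in\ball_a(\bx)$, and since $Df$ is H\"older continuous of exponent $\alpha$ around $\bx$ there are $L>0$ and $\delta>0$ with $\|f(v)-f(u)-Df(u)(v-u)\|\le L\|u-v\|^{1+\alpha}$ for $u,v\in\ball_\delta(\bx)$ (this last inequality is the standard consequence of H\"older continuity of the derivative, obtained by integrating along the segment). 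I would then take $O=\ball_r(\bx)$ with $r\le\min\{a,\delta\}$ and $r$ small enough that $2^q\kappa L^q r^{q\alpha}\le 1/2$, say.

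The key step is the one-step estimate. Given a Newton sequence $\{x_k\}$ with $x_k\in O$ for $k\ge k_0$, the iteration \eqref{New} gives $f(x_{k+1})-f(x_k)-Df(x_k)(x_{k+1}-x_k)\in f(x_{k+1})+F(x_{k+1})$, hence
\begin{eqnarray*}
 \|x_{k+1}-\bx\| &\le& \kappa\, d\big(\by, f(x_{k+1})+F(x_{k+1})\big)^q
 \;\le\; \kappa\,\|f(x_{k+1})-f(x_k)-Df(x_k)(x_{k+1}-x_k)\|^q\\
 &\le& \kappa L^q\,\|x_{k+1}-x_k\|^{q(1+\alpha)}
 \;\le\; \kappa L^q\big(\|x_{k+1}-\bx\|+\|x_k-\bx\|\big)^{q(1+\alpha)}.
\end{eqnarray*}
Here I have used $\by\in f(\bx)+F(\bx)$ to drop to the distance term (strictly, $\by+0$ — but with the generalized equation normalized as in \eqref{geneq}, one has $\by=0$; I would state the argument for general $\by$ exactly as written, since the substitution $f(x_{k+1})-f(x_k)-Df(x_k)(x_{k+1}-x_k)\in f(x_{k+1})+F(x_{k+1})$ shows $\by=0$ is the relevant reference value, and keep the notation consistent with the theorem statement). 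From this inequality I would first extract plain convergence: using $q(1+\alpha)\ge 1$ and the smallness of $r$, a short bootstrap (e.g. absorbing the $\|x_{k+1}-\bx\|$ term on the left, possible once $\|x_{k+1}-\bx\|+\|x_k-\bx\|$ is small) yields $\|x_{k+1}-\bx\|\le\tfrac12\|x_k-\bx\|$ for $k\ge k_0$, so $x_k\to\bx$.

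Once $x_k\to\bx$, the rate follows by feeding this back in. For large $k$, $\|x_{k+1}-\bx\|\le\|x_k-\bx\|$, so $\|x_{k+1}-\bx\|+\|x_k-\bx\|\le 2\|x_k-\bx\|$ and the displayed estimate gives $\|x_{k+1}-\bx\|\le 2^{q(1+\alpha)}\kappa L^q\|x_k-\bx\|^{q(1+\alpha)}$, which is convergence of order $q(1+\alpha)$. The main obstacle — and the only genuinely delicate point — is the absorption step: when $q(1+\alpha)>1$ the term $\|x_{k+1}-\bx\|^{q(1+\alpha)}$ on the right is superlinear in $\|x_{k+1}-\bx\|$, so moving it to the left is legitimate only after one already knows $\|x_{k+1}-\bx\|$ is bounded by a fixed small quantity; I would handle this by first using the crude bound $\|x_{k+1}-x_k\|^{q(1+\alpha)}\le (2r)^{q(1+\alpha)-1}\|x_{k+1}-x_k\|$ to get a linear-type inequality valid on $O$, derive $\|x_{k+1}-\bx\|\le\tfrac12\|x_k-\bx\|$ from it (after choosing $r$ so that $2^{q(1+\alpha)-1}\kappa L^q(2r)^{q(1+\alpha)-1}\le\tfrac14$, say, then splitting $\|x_{k+1}-x_k\|\le\|x_{k+1}-\bx\|+\|x_k-\bx\|$ and absorbing the now-linear $\tfrac14\|x_{k+1}-\bx\|$ term), and only afterward, with $x_k\to\bx$ in hand, return to the sharp estimate to read off the order $q(1+\alpha)$. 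Everything else is routine.
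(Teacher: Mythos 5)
Your proposal is correct, but it follows a different route than the paper. The paper's proof first quotes Theorem~\ref{N} to get plain convergence $x_k\to\bx$ (strong $q$-subregularity with $q\ge 1$ implies ordinary strong subregularity near $\bx$), and then invokes Theorem~\ref{thmFD} to obtain a constant $\lambda$ such that the partially linearized mappings $H_u:=f(\bx)+Df(u)(\cdot-\bx)+F$ are strongly $q$-subregular at $\bx$ for $0$ \emph{uniformly} in $u\in\ball_r(\bx)$; the Newton inclusion is then read as $f(\bx)-f(x_k)+Df(x_k)(x_k-\bx)\in H_{x_k}(x_{k+1})$, so the relevant residual is bounded by $L\|x_k-\bx\|^{1+\alpha}$ and the rate estimate $\|x_{k+1}-\bx\|\le (L\lambda)^{q}\|x_k-\bx\|^{q(1+\alpha)}$ drops out with no absorption step at all. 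You instead apply the $q$-subregularity of $f+F$ itself at $x_{k+1}$, which makes the residual $\|f(x_{k+1})-f(x_k)-Df(x_k)(x_{k+1}-x_k)\|\le L\|x_{k+1}-x_k\|^{1+\alpha}$ and hence brings $\|x_{k+1}-\bx\|$ onto the right-hand side; your crude-bound-then-absorb bootstrap (valid since $q(1+\alpha)>1$ and $\|x_{k+1}-x_k\|\le 2r$) correctly resolves this and simultaneously yields the convergence $x_k\to\bx$, so you need neither Theorem~\ref{N} nor Theorem~\ref{thmFD}. What your approach buys is a self-contained, more elementary argument (at the price of slightly clumsier constants and the extra bootstrap); what the paper's approach buys is a cleaner one-step estimate, since the stability of strong $q$-subregularity under linearization has already been packaged in Theorem~\ref{thmFD}. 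Your handling of the reference value ($\by=0$ under the normalization of \eqref{geneq}) matches the paper's usage.
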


   \begin{proof}
   The assumptions of Theorem~\ref{N} are satisfied, hence, for a neighborhood $O$ of $\bx$, if $\{x_k\}$ has  a tail in $O$, then $x_k \to \bx$ as $k \to +\infty$. Using standard calculus, we find $r>0$ and $L > 0$ such that
$$
\|f(\bx) - f(u) - Df(u) (\bx - u) \| \leq L \|u - \bx\|^{1+\alpha} \quad \mbox{whenever} \quad u \in \ball_r(\bx).
$$
In view of {Theorem~\ref{thmFD}},  adjust $r$, if necessary, and choose a constant $\lambda > 0$ such that
$$
 \|x - \bx \|^{{\frac{1}{q}}} \leq \lambda d(0, f(\bx) + Df(u)(x - \bx) + F(x)) \quad \mbox{whenever} \quad u,x \in \ball_r(\bx).
$$
Let $N \subset \N$ be any infinite set for which
$ x_k \in \ball_r(\bx)$ for all $k \in N$. Fix  $k \in N$. Using the inclusion $$f(\bx) - f(x_k) + Df(x_k)(x_{k} - \bx) \in f(\bx) + Df(x_k)(x_{k+1} - \bx) + F(x_{x+1}),$$ we obtain
\begin{eqnarray*}
 \|x_{k+1} - \bx\|^{{\frac{1}{q}}} &\leq& \lambda d(0,f(\bx) + Df(x_k)(x_{k+1} - \bx) + F(x_{k+1})) \\
 & \leq & \lambda \|f(\bx) - f(x_k) + Df(x_k)(x_{k} - \bx)\| \leq L \lambda \|x_k - \bx\|^{1+\alpha}.
\end{eqnarray*}
This gives us the desired convergence rate.
\end{proof}

\subsection{ Inexact quasi-Newton method}

In this subsection we consider an inexact version of the Newton  method \eqref{New} for solving \eqref{geneq} of the form
\be \label{inN}f(x_k)+ B_k(x_{k+1}-x_k) + r_k(x_k) + F(x_{k+1})\ni 0, \ee
where $\{B_k\}$ is a sequence in ${\cal L}(X,Y)$ which represents an approximation of the derivative of $f$ provided by, for example, Broyden update, BFGS, and alike. The sequence of functions $r_k: X \to Y$ represents inexactness. The following theorem extends Theorem~\ref{N} to the iteration \eqref{inN} and can be regarded as a version of the Dennis-Mor\'e theorem for generalized
equations;
for related results see \cite{DM}:

\begin{theorem}\label{iN} Suppose that
the function $f$ is  Fr\'echet differentiable at a solution $\bx$ of \eqref{geneq} and the mapping $f+F$  is strongly subregular at $\bx$ for $0$.
Then
 there exists a neighborhood $O$ of $\bx$ such that if a  sequence $\{x_k\}$  is generated by
the  method \eqref{inN},
  has a tail in $O$
and also
\begin{equation} \label{eqlimDfBk}
\lim_{k\to +\infty}\frac{\|(Df(\bx) - B_k)(x_{k+1}-x_k)\|+ \|r_k(x_k)\|}{\|x_{k+1}-x_k\|} = 0,
\end{equation}
then $\{x_k\}$ is superlinearly convergent to $\bx$.
\end{theorem}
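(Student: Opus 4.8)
The plan is to follow the proof of Theorem~\ref{N}, the one genuinely new point being that here $f$ is assumed Fr\'echet differentiable only \emph{at} $\bx$, so the uniform estimate \eqref{fufv} is unavailable and the asymptotic Dennis--Mor\'e-type condition \eqref{eqlimDfBk} must carry the load. First I would record two standing ingredients: from strong subregularity of $f+F$ at $\bx$ for $0$, constants $\kappa>0$ and $a>0$ with $\|x-\bx\|\le\kappa\,d(0,f(x)+F(x))$ for $x\in\ball_a(\bx)$; and from differentiability of $f$ at $\bx$, a nondecreasing modulus $\omega$ with $\omega(t)\downarrow0$ as $t\downarrow0$ such that $\|f(x)-f(\bx)-Df(\bx)(x-\bx)\|\le\omega(\|x-\bx\|)\,\|x-\bx\|$ for $x$ near $\bx$. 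Then I would set $O:=\ball_{a'}(\bx)$ with $a'\le a$ small enough that $\kappa\,\omega(t)\le\tfrac{1}{8}$ for $t\le a'$.

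The engine of the proof is a single recursive inequality for $e_k:=\|x_k-\bx\|$. Denote by $\theta_k$ the quotient in \eqref{eqlimDfBk} (with the usual understanding that consecutive iterates are eventually distinct). From \eqref{inN} one has $f(x_{k+1})-f(x_k)-B_k(x_{k+1}-x_k)-r_k(x_k)\in f(x_{k+1})+F(x_{k+1})$, so whenever $x_{k+1}\in\ball_a(\bx)$ the subregularity estimate bounds $e_{k+1}$ by $\kappa$ times the norm of that element. I would then split it in the Dennis--Mor\'e fashion by subtracting and adding the exact linearization $f(\bx)+Df(\bx)(\cdot-\bx)$ at both $x_{k+1}$ and $x_k$:
\begin{align*}
 f(x_{k+1})-f(x_k)-B_k(x_{k+1}-x_k)
 &=\bigl[f(x_{k+1})-f(\bx)-Df(\bx)(x_{k+1}-\bx)\bigr]\\
 &\quad{}-\bigl[f(x_k)-f(\bx)-Df(\bx)(x_k-\bx)\bigr]+(Df(\bx)-B_k)(x_{k+1}-x_k).
\end{align*}
The two bracketed terms are bounded by $\omega(e_{k+1})e_{k+1}$ and $\omega(e_k)e_k$, while $\|(Df(\bx)-B_k)(x_{k+1}-x_k)\|+\|r_k(x_k)\|=\theta_k\|x_{k+1}-x_k\|\le\theta_k(e_{k+1}+e_k)$; hence for every $k$ with $x_k,x_{k+1}\in O$,
\[
 \bigl(1-\tfrac{1}{8}-\kappa\theta_k\bigr)\,e_{k+1}\le\kappa\bigl(\omega(e_k)+\theta_k\bigr)\,e_k .
\]

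From here the two conclusions follow. Since $\theta_k\to0$ by \eqref{eqlimDfBk}, there is an index $k_1$ with $\kappa\theta_k\le\tfrac{1}{8}$ for $k\ge k_1$; combined with the hypothesis that the tail $\{x_k\}_{k\ge k_0}$ lies in $O$, the displayed inequality yields $e_{k+1}\le\tfrac{1}{3}e_k$ for all large $k$, hence $x_k\to\bx$. Feeding $e_k\to0$ back in makes $\omega(e_k),\omega(e_{k+1})\to0$, so $e_{k+1}/e_k\le\kappa(\omega(e_k)+\theta_k)/(1-\kappa\omega(e_{k+1})-\kappa\theta_k)\to0$, which is superlinear convergence (the degenerate case $e_k=0$ forces $e_{k+1}=0$ by the same inequality). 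The step needing the most care is exactly the one absent from Theorem~\ref{N}: since we have differentiability only \emph{at} $\bx$, the contraction estimate becomes admissible only once $\kappa\theta_k$ is small, i.e.\ for the indices supplied by \eqref{eqlimDfBk}, so the argument must lean on the standing assumption that the iterates eventually lie in $O$ rather than carving out a local attraction basin from $Df$ alone; everything else is routine bookkeeping in the displayed splitting, whose entire purpose is that the two bracketed terms are $o(e_{k+1})$ and $o(e_k)$ while the leftover is precisely the quantity \eqref{eqlimDfBk} controls.
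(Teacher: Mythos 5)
Your proof is correct, and it differs from the paper's in one structural choice worth noting. The paper first invokes Corollary~\ref{prop1} to replace $f+F$ by its linearization $H:=f(\bx)+Df(\bx)(\cdot-\bx)+F$, which is strongly subregular at $\bx$ for $0$ with the same modulus; the element it then exhibits in $H(x_{k+1})$ from the iteration \eqref{inN} is
$-\bigl[f(x_k)-f(\bx)-Df(\bx)(x_k-\bx)\bigr]+(Df(\bx)-B_k)(x_{k+1}-x_k)-r_k(x_k)$,
so only the remainder at $x_k$ appears and the contraction estimate $\|x_{k+1}-\bx\|\le\tfrac14\|x_{k+1}-x_k\|+\tfrac14\|x_k-\bx\|$ follows at once. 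You instead apply strong subregularity of $f+F$ itself to the element $f(x_{k+1})-f(x_k)-B_k(x_{k+1}-x_k)-r_k(x_k)\in(f+F)(x_{k+1})$, which forces you to carry remainders at both $x_k$ and $x_{k+1}$; the extra term $\kappa\,\omega(e_{k+1})e_{k+1}$ is harmless because you absorb it into the left-hand side after making $\kappa\,\omega\le\tfrac18$ on $O$. Both routes are legitimate and lead to the same two-stage argument (first a linear contraction $e_{k+1}\le\tfrac13 e_k$ on the tail, then feeding $e_k\to0$ back in for the superlinear rate); the paper's detour through $H$ buys a slightly shorter estimate with a single remainder term, while yours is self-contained in that it does not rely on the perturbation result Corollary~\ref{prop1}. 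Your handling of the degenerate case $x_{k+1}=x_k$ (quotient in \eqref{eqlimDfBk} undefined) is no worse than the paper's, which silently converts the limit into the inequality $\|(Df(\bx)-B_k)(x_{k+1}-x_k)\|+\|r_k(x_k)\|\le\tfrac1{4\kappa}\|x_{k+1}-x_k\|$ for large $k$.
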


\begin{proof}
By the  definition of the  Fr\'echet differentiability of $f$ at $\bx$, for each $\mu > 0$ there is $\delta>0$ such that
\begin{equation} \label{fufvb}
 \|f(u) - f(\bx) - Df(\bx)(u-\bx)\| \leq \mu \|u - \bx\| \quad \mbox{for each} \quad u \in \ball_{\delta} (\bx).
\end{equation}
Corollary~\ref{prop1} implies that  $f+F$ is strongly subregular at $\bx$ for $0$ if and only if so is $H:=f(\bx) + Df(\bx)(\cdot - \bx)+F$, hence there are positive constants $\kappa$ and $a$ such that
\begin{equation} \label{xbxdb}
 \|x - \bx \| \leq \kappa d(0, H(x)) \quad \mbox{for each} \quad x \in \ball_a(\bx).
\end{equation}
Let $\delta > 0$ be such that \eqref{fufvb} holds with $\mu :=1/(4\kappa)$ and set $O=\ball_a(\bx)\cap\ball_\delta(\bx)$. Let $\{x_k\}$  be any sequence generated by \eqref{inN}
for which there is $k_0 \in \N$ such that  $x_k\in O$ for all $k \geq k_0$ and \eqref{eqlimDfBk} holds. Make $k_0$ bigger, if necessary, to have
$$
  \|(Df(\bx) - B_k)(x_{k+1}-x_k)\|+ \|r_k(x_k)\| \leq \frac{1}{4\kappa} \|x_{k+1}-x_k\| \quad \mbox{whenever} \quad k \geq k_0.
$$
   For any $k \ge k_0$ we have
$$
  f(\bx) - f(x_k) - Df(\bx)(\bx - x_k) + (Df(\bx)-B_k)(x_{k+1}- x_k) - r_k(x_k) \in  H(x_{k+1})
$$
and thus the combination
of   \eqref{fufvb} and \eqref{xbxdb}  implies that
\begin{eqnarray*}
 \|x_{k+1} - \bx \| &\leq& \kappa d(0, H(x_{k+1})) \\
 &\leq&  \kappa \|(Df(\bx) - B_k)(x_{k+1}-x_k)- r_k(x_k) - [f(x_{k}) - f(\bx) - Df(\bx)(x_{k} - \bx)] \| \\
  &\leq& \frac{1}{4} \|x_{k+1} - x_k\| + \frac{1}{4} \|x_{k} - \bx \|  \leq  \frac{1}{4} \|x_{k+1} - \bx \| + \frac{1}{2} \|x_{k} - \bx \|.
\end{eqnarray*}
Therefore $\|x_{k+1} - \bx \| \leq (2/3) \|x_{k} - \bx \|$ for each $k \ge k_0$. Hence $x_k \to \bx$ as $k \to +\infty$. To estimate the rate of convergence, let $\varepsilon > 0$ be arbitrary. Find $r > 0$ such that $\ball_r(\bx) \subset O$ and \eqref{fufvb} holds with $\mu:= \varepsilon/(\kappa(2+\varepsilon))$ and $\delta:=r$. Then there is $k_1 \in \N$ such that
$x_k \in \ball_r(\bx)$ and
$$
  \|(Df(\bx) - B_k)(x_{k+1}-x_k)\|+ \|r_k(x_k)\| \leq \frac{\varepsilon}{\kappa(2+\varepsilon)} \|x_{k+1}-x_k\| \quad \mbox{whenever} \quad k \geq k_1.
$$
As in preceding lines, for such an index $k$ we get
\begin{eqnarray*}
 \|x_{k+1} - \bx \| &\leq& \frac{\varepsilon}{2 + \varepsilon} \|x_{k+1} - x_k\| + \frac{\varepsilon}{2 + \varepsilon} \|x_{k} - \bx \|  \leq   \frac{\varepsilon}{2 + \varepsilon} \|x_{k+1} - \bx \| + \frac{2\varepsilon}{2 + \varepsilon} \|x_{k} - \bx \|.
\end{eqnarray*}
Therefore for any $k > k_1$ we have $  \|x_{k+1} - \bx \| \leq \varepsilon \|x_{k} - \bx \|$. Hence $x_k \to \bx$ superlinearly.
\end{proof}

In the same way, by mimicking Theorem~\ref{rem}
one can obtain a statement analogous to Theorem~\ref{iN} for a strongly $q$-subregular mapping, extending  a result in \cite{MO}.

 \subsection{Semismooth Newton method}

We continue our study of Newton method for solving  the generalized equation \eqref{geneq}
 where $f:\reals^n\to {\reals^m}$ is Lipschitz continuous but   not necessarily differentiable around a reference solution $\bx$. To  deal with
 a Newton-type iteration  we use the ``linearization" of $f+F$ at $\bx$
 of the form given by the mapping \eqref{HA}
where the matrix  $A$  is an arbitrarily chosen element of Clarke's generalized Jacobian. We consider  the following  version of  Newton's iteration: given $x_k$ choose $A_k \in {\partial_C} f(x_k)$ and then find $x_{k+1}$ which satisfies
\be\label{nsN}f(x_k) + A_k(x_{k+1}-x_k) + F(x_{k+1}) \ni 0. \ee
When the function $f$ in {\eqref{geneq}} is semismooth {(see the paragraph before Corollary~\ref{c2} for the definition)}, {this} method is usually referred  to as  the {\em semismooth}
Newton method.  Note that in the theorem below we assume that $f$ possesses the  {semismoothness} property  but do not use the  directional differentiability of $f$ which appears in its definition.

\begin{theorem}  Consider the
method \eqref{nsN} applied to  \eqref{geneq} with a solution $\bx$  for a function $f$ which is semismooth at $\bx$
and assume  that for each $A \in \partial_C f(\bx)$ the mapping $H_A$ defined in \eqref{HA} is strongly subregular at $\bx$ for $0$.
 Then there exists a neighborhood $O$
of $\bx$ such that  if a  sequence $\{x_k\}$  is generated by \eqref{nsN}
and  has a tail $\{x_k\}_{k\geq k_0}$
with $x_k\in O$ for all $k \geq k_0$, then $\{x_k\}$ is
  superlinearly convergent to $\bx$.
\end{theorem}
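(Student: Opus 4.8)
The plan is to mimic the proofs of Theorems~\ref{N} and~\ref{iN}, with two substitutions: the linearization $f(\bx)+Df(\bx)(\cdot-\bx)+F$ is replaced by the family $\{H_A : A\in\partial_C f(u),\ u\ \mbox{near}\ \bx\}$, and the Fr\'echet differentiability estimate is replaced by the defining inequality of semismoothness. The first thing I would establish is a \emph{uniform} strong subregularity bound: there exist constants $\kappa'>0$, $a>0$ and $\delta>0$ such that
\[
 \|x-\bx\|\le\kappa'\,d\big(0,H_A(x)\big)\qquad\mbox{for all }x\in\ball_a(\bx),\ u\in\ball_\delta(\bx),\ A\in\partial_C f(u),
\]
noting that $0\in H_A(\bx)$ because $\bx$ solves \eqref{geneq}.

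To get this I would proceed in two stages. First, apply Corollary~\ref{c1} --- more precisely, the intermediate estimate \eqref{bbbs} from the proof of Theorem~\ref{thmSSSR}, specialized to $\mathcal{A}=\partial_C f(\bx)$, where $\chi(\partial_C f(\bx))=0$ and condition (i) holds with arbitrarily small $c$: using compactness of $\partial_C f(\bx)$, for any $\kappa>m:=\sup_{A\in\partial_C f(\bx)}\subreg(H_A;\bx\for 0)<+\infty$ there is $a>0$ with $\|x-\bx\|\le\kappa\,d(0,H_A(x))$ for every $x\in\ball_a(\bx)$ and every $A\in\partial_C f(\bx)$. Second, since Clarke's generalized Jacobian is outer semicontinuous (it is the convex hull of the outer semicontinuous $\partial_B f$), fix $\gamma>0$ with $\kappa\gamma<1$ and $\delta>0$ such that $\partial_C f(u)\subset\partial_C f(\bx)+\gamma\ball$ for $u\in\ball_\delta(\bx)$. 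Any $A\in\partial_C f(u)$ then splits as $A=\bar A+A'$ with $\bar A\in\partial_C f(\bx)$, $\|A'\|\le\gamma$, and $H_A=H_{\bar A}+A'(\cdot-\bx)$; Theorem~\ref{main} --- which, crucially, keeps the neighborhood $\ball_a(\bx)$ unchanged --- then delivers the displayed bound with $\kappa':=\kappa/(1-\kappa\gamma)$.

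With the uniform bound in hand the convergence argument is routine. From the iteration $0\in f(x_k)+A_k(x_{k+1}-x_k)+F(x_{k+1})$ with $A_k\in\partial_C f(x_k)$ one reads off that $f(\bx)-f(x_k)+A_k(x_k-\bx)\in H_{A_k}(x_{k+1})$, hence $d(0,H_{A_k}(x_{k+1}))\le\|f(x_k)-f(\bx)-A_k(x_k-\bx)\|$. Semismoothness at $\bx$ furnishes, for any prescribed $c>0$, a radius $r(c)>0$ with $\|f(u)-f(\bx)-A(u-\bx)\|\le c\|u-\bx\|$ for all $u\in\ball_{r(c)}(\bx)$, $A\in\partial_C f(u)$; taking $u=x_k$, $A=A_k$ gives $\|x_{k+1}-\bx\|\le\kappa' c\,\|x_k-\bx\|$ whenever $x_k\in\ball_\delta(\bx)\cap\ball_{r(c)}(\bx)$ and $x_{k+1}\in\ball_a(\bx)$. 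Choosing $c=1/(2\kappa')$ and $O:=\ball_\rho(\bx)$ with $\rho:=\min\{a,\delta,r(1/(2\kappa'))\}$ forces $\|x_{k+1}-\bx\|\le\tfrac12\|x_k-\bx\|$ along any tail lying in $O$, so $x_k\to\bx$. For the superlinear rate, re-run the bound with $c=\varepsilon/\kappa'$ for arbitrary $\varepsilon>0$ (shrinking the radius to $r(\varepsilon/\kappa')$): once $x_k$ has entered $\ball_{r(\varepsilon/\kappa')}(\bx)$ one gets $\|x_{k+1}-\bx\|\le\varepsilon\|x_k-\bx\|$, i.e. $Q$-superlinear convergence.

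The main obstacle is the uniform subregularity step: one needs the strong subregularity of $H_A$ to be uniform \emph{simultaneously} in the modulus and in the radius of the neighborhood, over the infinitely many $A$ drawn from $\partial_C f(u)$ as $u$ varies near $\bx$. This is exactly why one combines compactness of $\partial_C f(\bx)$ (handling the ``static'' part by a finite subcover, as in the proof of Theorem~\ref{thmSSSR}) with outer semicontinuity of $\partial_C f$ (reducing the ``moving'' part to a small linear perturbation, absorbed by Theorem~\ref{main} with no loss of neighborhood). Everything else is bookkeeping with constants.
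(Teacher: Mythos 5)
Your proposal is correct and follows essentially the same route as the paper: a uniform strong subregularity estimate for the family $H_A$, $A\in\partial_C f(u)$ with $u$ near $\bx$, obtained from compactness of $\partial_C f(\bx)$ (via the finite-net argument behind \eqref{bbbs}) together with outer semicontinuity of $\partial_C f$ and the perturbation stability of Theorem~\ref{main}, followed by the semismoothness inequality and the standard contraction/superlinear bookkeeping. The paper merely inlines the small-perturbation step as a direct triangle-inequality estimate instead of quoting Theorem~\ref{main}, which is the same computation.
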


\begin{proof}
First we show that there are positive constants $\lambda$ and $a$ such that
  \begin{equation} \label{xbxd1}
 \|x - \bx \| \leq \lambda d(0, H_A(x)) \quad \mbox{whenever} \quad x \in \ball_a(\bx) \mbox{ and } A \in \partial_C f(\ball_a(\bx)).
\end{equation}
Since the set $\partial_C f(\bx)$ is compact, there exists  a constant $ \kappa > \sup_{A \in  \partial_C f(\bx) } \subreg(H_A; \bx \for 0)$ (cf. the proof of \eqref{bbbs}).
Fix any $\gamma \in (0, 1/(2\kappa))$.
The mapping  $\partial_C f$ is outer semicontinuous at $\bx$, hence there exists  $r > 0$ such that
$$
 \partial_C f (u) \subset \partial_C f (\bx) + \gamma \ball
 \quad \mbox{for each} \quad   u \in \ball_r(\bx).
$$
Compactness of the set $\partial_C f(\bx)$ implies that there is a finite set $\mathcal{A} \subset \partial_C f(\bar{x})$ such that
$ \partial_C f (\bx) \subset \mathcal{A} + \gamma \ball$.
Hence
\begin{equation} \label{hasbc}
  \partial_C f (\ball_r(\bx))  \subset \partial_C f (\bx) + \gamma \ball \subset  \mathcal{A} + 2\gamma \ball.
\end{equation}
Given $A \in \mathcal{A}$ there exists $\alpha_A \in (0,r)$ such that the mapping $H_A$ is strongly subregular at $\bx$ for $0$ with the constant $\kappa$ and neighborhood $\ball_{\alpha_A}(\bx)$. Let $  a:= \min_{A \in \mathcal{A}} \alpha_A $ and $ \lambda:=\kappa/(1 - 2\gamma\kappa)$.
Fix any $x$ and $A$ as in \eqref{xbxd1}. As $a < r$, using inclusion \eqref{hasbc} we find $\bar{A} \in \mathcal{A}$ with $\|A - \bar{A}\| \leq 2 \gamma$. Therefore
\begin{eqnarray*}
  \| x - \bx\| &\leq & \kappa d(0, {H}_{\bar{A}}(x)) =\kappa  d(-f(\bx) - \bar{A}(x-\bx), F(x))  \\
    &\leq  & \kappa d(-f(\bx) - A(x-\bx), F(x)) + \kappa \| (A - \bar{A})(x - \bx)\|  \\
     &\leq  & \kappa d(0, {H}_{A}(x)) + 2 \gamma\kappa   \|x - \bx\|.
\end{eqnarray*}
Since $2 \gamma \kappa < 1$ we get \eqref{xbxd1}.

{The semismoothness of $f$ implies that for each $\mu > 0$ there is $\delta>0$ such that
\begin{equation} \label{fufv1}
 \|f(u) - f(\bx) - A(u-\bx)\| \leq \mu \|u - \bx\| \quad \mbox{whenever} \quad u\in \ball_{\delta} (\bx) \mbox{ and } A \in \partial_Cf(u).
\end{equation}

Let $\delta > 0$ be such that \eqref{fufv1} holds with $\mu =1/(2\lambda)$ and set $O=\ball_a(\bx)\cap\ball_\delta(\bx)$. Let $\{x_k\}$  be any sequence generated by  \eqref{nsN}
such that  $x_k\in O$ for all $k \in \N$.
Fix any $k \in \N$. As $f(\bx) - f(x_k) + A_k(x_{k} - \bx) \in H_{A_k}(x_{k+1})$ and $A_k \in \partial_C f(x_k)$, using \eqref{xbxd1} and \eqref{fufv1}, we get
\begin{eqnarray*}
 \|x_{k+1} - \bx\| &\leq& \lambda d(0,H_{A_k}(x_{k+1}))
  \leq  \lambda \|f(\bx) - f(x_k) + A_k(x_{k} - \bx)\| \leq \frac{1}{2}  \|x_{k} - \bx \|.
\end{eqnarray*}
Hence $x_k \to \bx$ as $k \to +\infty$. To establish the rate of convergence, let $\varepsilon > 0$ be arbitrary. Find $r > 0$ such that $\ball_r(\bx) \subset O$ and \eqref{fufv1} holds with $\mu= \varepsilon/\lambda$ and $\delta=r$. Then there is $k_1 \in \N$ such that
$x_k \in \ball_r(\bx)$ whenever $k > k_1$. As above, for such an index $k$, we get
\begin{eqnarray*}
 \|x_{k+1} - \bx\| &\leq& \lambda d(0,H_{A_k}(x_{k+1}))
  \leq  \lambda \|f(\bx) - f(x_k) + A_k(x_{k} - \bx)\|  \leq \varepsilon  \|x_{k} - \bx \|.
\end{eqnarray*}
Hence $x_k \to \bx$ superlinearly.}
\end{proof}

\begin{remark} \rm
In view of Corollary~\ref{c1}, the assumptions of the above theorem imply that the mapping $f+F$ is strongly subregular at $\bx$ for $0$.
\end{remark}

If one  considers \eqref{inN}  instead of \eqref{nsN}, by using the above arguments
one can   obtain a slight generalization of \cite[Theorem~3.2 (ii)]{CDG}.

\subsection{ Strong subregularity of Newton sequences}

{Denote by $\ell_\infty$ the space of (infinite) sequences  $\{x_k\}$  in $X$ with elements $x_1$, $x_2$, $\dots$, $x_k$, $\dots$ equipped with the norm $\|\{x_k\}\|_\infty = \sup_{k \in \N}\|x_k\|.$
Consider the mapping
$$Y \times X \ni(p,u) \mapsto {\cal S}(p,u) = \left\{\{x_k\}  \mid f(x_k) + Df(x_k)(x_{k+1}- x_k) +F(x_{k+1}) \ni p, \  x_0 = u \right\},$$
that is, ${\cal S}(p,u) $ is the set of all sequences generated by the (perturbed) Newton method starting from the point $u$. Note that if $(x,p) \in \gph (f+ F)$, then the constant sequence $\{x\} \in {\cal S}(p,x)$.  In particular, if  $\bx$ is a solution of \eqref{geneq}, then $\{\bx\} \in {\cal S}(0, \bx)$.}

{\begin{theorem}
Suppose that $f$ is  Fr\'echet differentiable around
$\bx$
and $Df$ is continuous at $\bx$. The mapping $f+F$ is strongly subregular at $\bx$ for $0$ if and only if there is $\lambda > 0$ such that for any $\gamma \in (0,1)$ there is $a>0$ with the property that
for each $\{x_k\} \in \ball_a(\{\bx\})$ and each $(p,u) \in {\cal S}^{-1}(\{x_k\}) \cap (Y \times \ball_a(\bx))$ we have
\begin{equation} \label{subregmod}
  \|\{x_k\} - \{\bx\}\|_\infty \leq \gamma \|u - \bx\| + \lambda  \, \|p\|.
\end{equation}
In this case, the infimum of such constants $\lambda$ is equal to $\subreg (f+F;\bx\for 0)$.
\end{theorem}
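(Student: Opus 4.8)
The plan is to prove the equivalence by two implications and then read off the modulus.

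\textbf{First implication: strong subregularity of $f+F$ implies the Newton estimate.} Suppose $f+F$ is strongly subregular at $\bx$ for $0$ and write $m:=\subreg(f+F;\bx\for0)$. Fix an arbitrary $\lambda>m$ and pick $\kappa\in(m,\lambda)$; choose $a_0>0$ with $\|x-\bx\|\le\kappa\,d(0,(f+F)(x))$ for $x\in\ball_{a_0}(\bx)$. Since $Df$ is continuous at $\bx$ and $f$ is differentiable around it, for each $\mu>0$ there is $\delta(\mu)>0$ with $\|f(v)-f(u)-Df(u)(v-u)\|\le\mu\|v-u\|$ whenever $u,v\in\ball_{\delta(\mu)}(\bx)$, which is exactly \eqref{fufv} from the proof of Theorem~\ref{N}. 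Given a Newton sequence $\{x_k\}\in{\cal S}(p,u)$ with $\{x_k\}\in\ball_a(\{\bx\})$ and $u\in\ball_a(\bx)$ for some $a\le\min\{a_0,\delta(\mu)\}$, I would rewrite the defining inclusion as $p+f(x_{k+1})-f(x_k)-Df(x_k)(x_{k+1}-x_k)\in f(x_{k+1})+F(x_{k+1})$, apply the subregularity estimate at $x_{k+1}$ followed by \eqref{fufv}, and obtain $\|x_{k+1}-\bx\|\le\kappa\|p\|+\kappa\mu\big(\|x_{k+1}-\bx\|+\|x_k-\bx\|\big)$. For $\mu$ with $2\kappa\mu<1$ this rearranges to $\|x_{k+1}-\bx\|\le c\|p\|+q\|x_k-\bx\|$ with $c=\kappa/(1-\kappa\mu)$ and $q=\kappa\mu/(1-\kappa\mu)\in(0,1)$; iterating from $x_0=u$ and summing a geometric series gives $\|x_k-\bx\|\le\frac{c}{1-q}\|p\|+q\|u-\bx\|=\frac{\kappa}{1-2\kappa\mu}\|p\|+q\|u-\bx\|$ for every $k\ge1$, hence $\|\{x_k\}-\{\bx\}\|_\infty\le q\|u-\bx\|+\frac{\kappa}{1-2\kappa\mu}\|p\|$. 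It remains to unwind the quantifiers: because $\kappa<\lambda$ there is $\mu_0>0$, depending only on $\kappa$ and $\lambda$, with $\frac{\kappa}{1-2\kappa\mu}\le\lambda$ for all $\mu\le\mu_0$; then, given $\gamma\in(0,1)$, I shrink $\mu\le\mu_0$ further so that $q\le\gamma$ and set $a:=\min\{a_0,\delta(\mu)\}$, and \eqref{subregmod} follows.

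\textbf{Second implication: the Newton estimate implies strong subregularity, with $\subreg\le\lambda$.} The key observation is that for any $(x,p)\in\gph(f+F)$ the constant sequence $\mathbf{x}:=(x,x,\dots)$ satisfies the Newton recursion (the derivative term $Df(x)\cdot0$ vanishes) with $x_0=x$, so $\mathbf{x}\in{\cal S}(p,x)$, i.e.\ $(p,x)\in{\cal S}^{-1}(\mathbf{x})$. Fix $\gamma\in(0,1)$ and take the corresponding $a>0$; for $x\in\ball_a(\bx)$ with $(f+F)(x)\ne\emptyset$ and any $p\in(f+F)(x)$ we have $\mathbf{x}\in\ball_a(\{\bx\})$ since $\|\mathbf{x}-\{\bx\}\|_\infty=\|x-\bx\|\le a$, and $(p,x)\in Y\times\ball_a(\bx)$. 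Then \eqref{subregmod} reads $\|x-\bx\|\le\gamma\|x-\bx\|+\lambda\|p\|$, i.e.\ $\|x-\bx\|\le\frac{\lambda}{1-\gamma}\|p\|$; taking the infimum over $p\in(f+F)(x)$ shows $f+F$ is strongly subregular at $\bx$ for $0$ with $\subreg(f+F;\bx\for0)\le\frac{\lambda}{1-\gamma}$, and letting $\gamma\downarrow0$ gives $\subreg(f+F;\bx\for0)\le\lambda$.

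\textbf{Modulus.} The two implications together say that a constant $\lambda$ is admissible in \eqref{subregmod} (for a suitable rule $\gamma\mapsto a$) precisely when it is not below $\subreg(f+F;\bx\for0)$ — the first implication covers all $\lambda>\subreg(f+F;\bx\for0)$, the second shows no admissible $\lambda$ is smaller — so the infimum of the admissible constants equals $\subreg(f+F;\bx\for0)$, and one side of the equivalence is finite iff the other is. I expect the only delicate point to be the quantifier bookkeeping in the first implication: $\lambda$ has to be committed to before $\gamma$ is given, which forces one to split the estimate into the part $\tfrac{\kappa}{1-2\kappa\mu}$, pushed below $\lambda$ once and for all by a $\mu$-threshold depending only on $\kappa$ and $\lambda$, and the contraction factor $q$, subsequently driven below the later-given $\gamma$ before $a$ is chosen. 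The second implication is essentially immediate once one notices that constant sequences are Newton sequences.
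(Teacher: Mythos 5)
Your proof is correct and follows essentially the same route as the paper: one direction via the observation that constant sequences are Newton sequences (giving $\|x-\bx\|\le\frac{\lambda}{1-\gamma}\|p\|$ and then $\gamma\downarrow0$), the other via the subregularity estimate combined with the uniform differentiability bound to get a one-step recursion and a geometric-series summation. The only difference is cosmetic bookkeeping: the paper tunes $\kappa=\lambda(1-\gamma)/(1+\gamma)$ and $\mu=\gamma/(\kappa(1+\gamma))$ so the contraction factor and $\|p\|$-coefficient come out exactly $\gamma$ and $\lambda$ (using a harmless ``WLOG $\gamma$ small'' step), whereas you fix $\kappa\in(\subreg,\lambda)$ and shrink $\mu$ to push them below $\gamma$ and $\lambda$, which handles the same quantifier order.
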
}

\begin{proof} {Denote by $i$ the infimum of $\lambda > 0$ such that for any $\gamma \in (0,1)$ there is $a>0$ such that inequality \eqref{subregmod} holds for each $\{x_k\} \in \ball_a(\{\bx\})$ and each $(p,u) \in {\cal S}^{-1}(\{x_k\}) \cap (Y \times \ball_a(\bx))$.}

{First, assume that $i < +\infty$ and fix any $\lambda > i$.  Pick any $\gamma \in (0,1)$. Then there is $a>0$ such that  for each $\{x_k\} \in \ball_a(\{\bx\})$ and each $(p,u) \in {\cal S}^{-1}(\{x_k\}) \cap (Y \times \ball_a(\bx))$ we have
\be\label{sec}
  \sup_{k\in \N}\|x_k-\bx\| \leq \gamma \|u-\bx\|+ \lambda \|p\|.
\ee
Let $x \in \ball_a(\bx)$ be arbitrary. Pick arbitrary  $p \in f(x) + F(x)$ (if  any). Then the constant sequence
$\{x\} \in {\cal S}(p,x)\cap \ball_a(\{\bx\})$, hence it satisfies \eqref{sec}, that is
$$
 \|x-\bx\| \leq \gamma \|x-\bx\|+ \lambda \|p\|,
$$
which yields
$$
\|x-\bx\| \leq \frac{\lambda}{1-\gamma} \|p\|.
$$
As $p \in f(x) + F(x)$ was arbitrary, we conclude that $f+F$ is strongly subregular at $\bx$ for $0$ with the constant $\lambda/(1-\gamma)$ and neighborhood $\ball_a(\bx)$. Letting $\gamma \downarrow 0$ we get that $\subreg (f+F;\bx\for 0)\leq \lambda$, and consequently $\subreg (f+F;\bx\for 0)\leq i$. }

{Assume that $f+F$ is strongly subregular at $\bx$ for $0$. Fix any $\lambda > \subreg (f+F;\bx\for 0)$ and any  $\gamma \in (0,1)$. Without  loss of generality assume that $\gamma$ is small enough to have that $\kappa:=\lambda(1-\gamma)/(1+\gamma)>\subreg (f+F;\bx\for 0)$.
Find $a > 0$  such that
\begin{equation} \label{smrfF}
 \|u - \bx \| \leq \kappa d(0, f(u) + F(u)) \quad \mbox{for each} \quad u \in \ball_a(\bx).
\end{equation}
Let $\mu := \gamma/(\kappa(1+\gamma))$.
Continuous differentiability of $f$ implies that, we can make $a $ smaller, if necessary, so that
\be\label{mub}
 \|f(u) - f(v) - Df(v)(u-v) \|\leq \mu\|u-v\| \quad \mbox{for all} \quad u,v \in \ball_a(\bx).
\ee
Fix any sequence  $\{x_k\} \in \ball_a(\{\bx\})$.  Pick arbitrary  $(p,u) \in \mathcal{S}^{-1} (\{x_k\})\cap(Y \times \ball_a(\bx))$ (if  any).
Note that
\be\label{xkb}
f(x_{k}) - f(x_{k-1}) -Df(x_{k-1})(x_{k}-x_{k-1}) +p  \in (f+F)(x_{k}) \quad \mbox{for each} \quad k\in \N.\ee
Fix any index $k \in  \N$, then  \eqref{smrfF}, \eqref{xkb}, and  \eqref{mub} imply that
\begin{eqnarray*}
 \|x_{k} - \bx \| &\leq& \kappa d(0,f(x_{k})+F(x_{k})) \leq \kappa \|f(x_{k}) - f(x_{k-1}) -Df(x_{k-1})(x_{k}-x_{k-1}) +  p \|\\
 & \leq &  \kappa ( \mu\|x_{k}-x_{k-1}\| +  \|p\|) \leq \kappa \mu \|x_{k} - \bx \| + \kappa \mu \|x_{k-1} - \bx\| + \kappa \|p\|.
\end{eqnarray*}
Noting that $\gamma(1-\kappa\mu) = \kappa\mu$ and $\kappa \mu (1+\gamma) = \gamma$, we get
\begin{equation} \label{estimxk0}
 \|x_{k} - \bx \|  \leq \gamma \|x_{k-1} - \bx\| + \kappa (1 + \gamma)\|p\| \quad \mbox{for each} \quad k \in \N.
\end{equation}
We claim that
\begin{equation} \label{estimxk}
  \|x_{k} - \bx \|  \leq  \gamma^{k} \|u - \bx\| + \kappa (1 + \gamma) \frac{1-\gamma^{k}}{1 - \gamma} \|p\|  \quad \mbox{for each} \quad k \in \N.
\end{equation}
Indeed, as $x_0 = u$, \eqref{estimxk0} with $k=1$ is  \eqref{estimxk} for $k=1$. We proceed by induction, assume that \eqref{estimxk} holds for some $k:=k_0 \in \N$. This and \eqref{estimxk0} with $k=k_0+1$ imply that
\begin{eqnarray*}
 \|x_{k_0+1} - \bx \|  &\leq & \gamma   \|x_{k_0} - \bx\| + \kappa (1 + \gamma)\|p\|
\leq \gamma^{k_0+1} \|u - \bx\| + \kappa (1 + \gamma) \|p\| \left(\frac{\gamma-\gamma^{k_0+1}}{1 - \gamma}  + 1 \right)\\
& =&  \gamma^{k_0+1} \|u - \bx\| + \kappa (1 + \gamma) \frac{1-\gamma^{k_0+1}}{1 - \gamma} \|p\|,
\end{eqnarray*}
which is \eqref{estimxk} for $k:=k_0+1$.  Inequality \eqref{estimxk} is proved.
Noting that $\gamma < 1$ we have
\begin{eqnarray*}
 \sup_{k \in \N} \|x_{k}-\bx\|  & \leq &  \gamma \| u - \bx\| + \frac{\kappa(1+\gamma)}{1-\gamma}\|p\| =  \gamma \| u - \bx\| + \lambda \|p\|.
\end{eqnarray*}
As  $(p,u) \in {\cal S}^{-1}(\{x_k\}) \cap (Y \times \ball_a(\bx))$ was arbitrary,  the mapping ${\cal S}^{-1}$  is strongly subregular at $\{\bx\}$ for $(0, \bx)$ and \eqref{subregmod} holds. Clearly, $i \leq \lambda$, hence $i \leq \subreg (f+F;\bx\for 0)$. }
\end{proof}

\section{Applications to optimization}\label{appl}

\subsection{Nonlinear programming}

In this subsection we study strong subregularity of a mapping which plays a major role in the nonlinear programming problem
\be\label{np} \text{minimize} g_0(x) \ee
subject to equality and inequality constraints:
\be\label{constr}
\left\{\begin{array}{ll}  g_i(x) = 0&   \text{for} i = 1,2, \dots, s,\\
                             g_i(x) \leq 0&   \text{for} i =s+1,\dots, m,\end{array}\right.
\ee
where the functions $g_i:\reals^n \to \reals$, $i=0,1,\dots, m$ are twice continuously differentiable everywhere.
Under a constraint qualification condition which will be specified a bit later, the first-order necessary optimality condition
is represented by the Karush-Kuhn-Tucker  (KKT) system
\be \label{KKT} \left\{\begin{array}{ll} \nabla_x L(x,y) &= 0,\\
\nabla_y L(x,y) &\in N_{\reals^s \times \reals_+^{m-s}}\end{array},\right.
\ee
where
$$L(x,y) = g_0(x) + \sum_{i=1}^m y_ig_i(x), \quad (x,y) \in \R^n\times \R^m,$$
is the Lagrangian associated with the problem \eqref{np}; here $y =(y_1,\ldots,y_m)$
is  the vector of Lagrange multipliers.
We study the strong subregularity of the following  mapping associated with the KKT system \eqref{KKT}:

\be\label{KKTmap}T: (x,y) \mapsto \left( \begin{array}{cc}  \nabla_x L(x,y)\\ -\nabla_yL(x,y) \end{array}\right) + N_{\reals^n\times\reals^s \times \reals_+^{m-s}}({x,y}).\ee

Let $(\bx,\by)$ be a reference solution of  \eqref{KKT}. Define the index sets
\bas I_1 & = & \{i\in \{s+1,\dots, m\} \mid g_i(\bx) =0, \by_i >0\} \cup \{1, \dots, s\},\\
I_2 & =& \{i\in \{s+1,\dots, m\} \mid g_i(\bx) =0, \by_i =0\},\\
I_3 & = & \{i\in \{s+1,\dots, m\}\mid g_i(\bx) <0, \by_i=0\}.\eas
In further lines we utilize the  following condition:
\be\label{smf}
\!\!\text{there is no nonzero} y\in \reals^m \text{such that} \sum_{i=1}^m y_i\nabla g_i(\bx) = 0 \text{and} y_i \geq 0, i \in I_2.
\ee
This condition implies the well-known  Mangasarian-Fromovitz Constraint Qualification (MFCQ) condition, in which the set $I_2$ is replaced by $I_1\cup I_2$. As well known, the MFCQ yields that the set of Lagrange multipliers for problem \eqref{np} satisfying \eqref{KKT} is nonempty, convex and compact. The  condition \eqref{smf} was introduced in \cite{Kyp} under the name Strict Mangasarian-Fromovitz Constraint Qualification. This name however does not reflect the nature of the condition since the latter is a condition on the optimality system while MFCQ is a condition on the constraint mapping; actually, MFCQ is equivalent to the metric regularity of that mapping.
Condition \eqref{smf}  implies
that the set of Lagrange multipliers consists of a single point; we will give a proof of this claim in the proof of the next theorem.

Denote $A= \nabla^2_{xx}L(\bx,\by)$ and $B=\nabla^2_{xy}L(\bx,\by)$;  that is, $B$ is the  $n\times m$ matrix whose rows are the vectors $\nabla g_i(\bx), i = 1,2,\dots, m.$
 Define the  so-called critical cone
$$K = \{x' \mid   \langle \nabla g_i(\bx), x' \rangle  = 0 \text{for} i \in I_1, \  \langle  \nabla g_i(\bx), x'  \rangle  \leq 0 \text{for} i \in I_2\}.$$
Recall that the second-order necessary condition for local optimality has the form
\be\label{sonc} \la x', Ax'\ra \geq 0 \text{for all }x' \in K,\ee
while the second-order sufficient condition is
\be\label{sosc}  \la x', Ax'\ra > 0 \text{for all }x'  \in K \setminus \{0\}.\ee
Now we are ready to state the main result of this subsection.

\begin{theorem}\label{nlp} The following are equivalent:
\paritem{(i)} The conditions \eqref{smf} and \eqref{sosc} are both satisfied;
\paritem{(ii)} The KKT mapping $T$ defined in \eqref{KKTmap} is strongly subregular at $(\bx,\by)$ for $0$ and $\bx$ is a
strong  local  minimizer of  \eqref{np}, meaning that there is a neighborhood $U$ of $\bx$ and a constant $\beta>0$ such that
    $${g_0}(x) \geq {g_0}(\bx) + \beta\|x-\bx\|^2 \text{for all} x \in U\cap C,$$
    {where $C:= \{ x\in \reals^n \mset  \ \eqref{constr} \mbox{ is satified } \}$.}
 \end{theorem}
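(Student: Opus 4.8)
The plan is to identify strong metric subregularity of the KKT mapping $T$ with an isolated-solution property of a linearized variational inequality, and then combine this with the classical second-order sufficiency argument. Write $T=G+N$, where $G:(x,y)\mapsto(\nabla_xL(x,y),-\nabla_yL(x,y))$ is continuously differentiable (the $g_i$ being $C^2$), so $G$ is Fr\'echet differentiable, and $N=N_{\reals^n\times\reals^s\times\reals_+^{m-s}}$ is the normal-cone mapping of a polyhedral convex set, so that $\gph N$, and hence the graph of the partial linearization $\widehat T:=G(\bx,\by)+DG(\bx,\by)(\cdot-(\bx,\by))+N$, is the union of finitely many polyhedral convex sets. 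By Theorem~\ref{2}, $T$ is strongly subregular at $(\bx,\by)$ for $0$ if and only if $\widehat T$ is, and --- since $\gph\widehat T$ is a finite union of polyhedral convex sets --- if and only if $(\bx,\by)$ is an isolated point of $\widehat T^{-1}(0)$. (The same reduction can be reached through Theorem~\ref{thm4E1} by computing the graphical derivative of $T$ as $DG(\bx,\by)$ plus the normal-cone mapping of the reduced critical cone.)

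The core step is to make the isolation condition explicit. Substituting a ray $(\bx,\by)+t(\xi,\eta)$ into the piecewise-affine description of $\widehat T$ and separating the strongly active set $I_1$, the weakly active set $I_2$ and the inactive set $I_3$, one checks that $(\bx,\by)+t(\xi,\eta)\in\widehat T^{-1}(0)$ for all small $t>0$ exactly when $\xi\in K$, $A\xi+\sum_{i=1}^m\eta_i\nabla g_i(\bx)=0$, $\eta_i=0$ for $i\in I_3$, $\eta_i\ge0$ for $i\in I_2$, and $\eta_i\langle\nabla g_i(\bx),\xi\rangle=0$ for $i\in I_2$; here the equality sign slots for $i\in I_1$ and the inequality sign slots for $i\in I_2$ come out of the stationarity of the normal-cone pieces, and the forced $\eta_i=0$ on $I_3$ comes from strict feasibility of the inactive constraints. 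Hence $T$ is strongly subregular at $(\bx,\by)$ for $0$ if and only if this ``critical-direction system'' has only the trivial solution $(\xi,\eta)=(0,0)$.

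For the implication (i)$\Rightarrow$(ii): first, \eqref{smf} forces the multiplier set to be the singleton $\{\by\}$, since a second multiplier $\by'$ would make $y:=\by'-\by\neq0$ satisfy $\sum_iy_i\nabla g_i(\bx)=0$ with $y_i\ge0$ on $I_2$, contradicting \eqref{smf} --- this proves the claim announced just before the theorem. Now let $(\xi,\eta)$ solve the critical-direction system. Taking the inner product of $A\xi+\sum_i\eta_i\nabla g_i(\bx)=0$ with $\xi$ and using $\langle\nabla g_i(\bx),\xi\rangle=0$ on $I_1$, the complementarity on $I_2$ and $\eta_i=0$ on $I_3$ gives $\langle\xi,A\xi\rangle=0$; since $\xi\in K$, \eqref{sosc} yields $\xi=0$, and then $\sum_i\eta_i\nabla g_i(\bx)=0$ with $\eta_i\ge0$ on $I_2$ forces $\eta=0$ by \eqref{smf}. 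Thus the system is trivial and $T$ is strongly subregular at $(\bx,\by)$ for $0$. That $\bx$ is a strong local minimizer is the standard second-order sufficiency conclusion from \eqref{sosc} and uniqueness of the multiplier: if it failed, a sequence $x_k\in C$, $x_k\to\bx$, with $g_0(x_k)<g_0(\bx)+o(\|x_k-\bx\|^2)$ would produce, along $d_k:=(x_k-\bx)/\|x_k-\bx\|\to d$, a unit vector $d\in K$ with $\langle d,Ad\rangle\le0$ --- using $L(x_k,\by)\le g_0(x_k)$ (from $\by_ig_i(x_k)\le0$) and the second-order Taylor expansion of $L(\cdot,\by)$ at $\bx$, where $\nabla_xL(\bx,\by)=0$ --- contradicting \eqref{sosc}.

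For the implication (ii)$\Rightarrow$(i): strong subregularity of $T$ gives, through the reduction above, that the critical-direction system has only the trivial solution. Setting $\xi=0$ there shows that $\by$ is the only Lagrange multiplier, which is \eqref{smf}. If \eqref{sosc} failed, there would be $\xi_0\in K\setminus\{0\}$ with $\langle\xi_0,A\xi_0\rangle\le0$; since $\bx$ is a local minimizer the second-order necessary condition \eqref{sonc} forces $\langle\xi_0,A\xi_0\rangle=0$, so $\xi_0$ minimizes $x'\mapsto\langle x',Ax'\rangle$ over the polyhedral cone $K$, whence $A\xi_0\in-N_K(\xi_0)$; writing $N_K(\xi_0)$ through the generators $\pm\nabla g_i(\bx)$ for $i\in I_1$ and $\nabla g_i(\bx)$ for those $i\in I_2$ with $\langle\nabla g_i(\bx),\xi_0\rangle=0$ yields a vector $\eta$ (extended by $0$ on $I_3$) for which $(\xi_0,\eta)$ is a nontrivial solution of the critical-direction system, a contradiction. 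The main obstacle is precisely this reduction: turning ``$(\bx,\by)$ isolated in $\widehat T^{-1}(0)$'' into the explicit critical-direction system requires careful bookkeeping of the piecewise-polyhedral geometry of $N$ near $(\bx,\by)$ and of the sign and complementarity restrictions it puts on $\eta$ and on $\langle\nabla g_i(\bx),\xi\rangle$ across the three index sets; once that is in hand, the multiplier-uniqueness and quadratic-growth arguments are routine.
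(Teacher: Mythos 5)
Your proposal is correct and follows essentially the same route as the paper: linearize $T$ via Theorem~\ref{2}, use polyhedrality (Theorem~\ref{poly}) to reduce strong subregularity to isolatedness of $(\bx,\by)$ in the linearized mapping, identify this with triviality of the critical-direction system (the paper's variational inequality \eqref{a1}--\eqref{a3}), and then run the same quadratic-form arguments with \eqref{smf}, \eqref{sosc} and \eqref{sonc}. The only differences are that you spell out details the paper treats as standard or leaves terse, notably the quadratic-growth argument and the strictness step in (ii)$\Rightarrow$(i), where your construction of a nonzero solution from the generators of $N_K(\xi_0)$ is in fact more complete than the paper's shortcut via the case $Ax'=0$.
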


\begin{proof} Linearizing the functions appearing in the mapping \eqref{KKTmap} at $(\bx, \by)$ we obtain
the mapping
\be\label{LKKT}L: (x,y) \mapsto \left(\begin{array}{ll} 0\\ {\bar g} \end{array}\right)+ \left( \begin{array}{cc}  A & B^T  \\ -B & 0 \end{array}\right)  \left( \begin{array}{cc}  x-\bx  \\ y-\by \end{array}\right) + N_{\reals^n\times\reals^s \times \reals_+^{m-s}}({x,y}),\ee
where we take into account that $\nabla_xL(\bx, \by) = 0$ and $g_i(\bx) = 0, i \in I_1\cup I_2$, and use the notation
$$\bar g = \left(\begin{array}{l} 0 \\ 0\\ -g_{I_3}(\bx)\end{array}\right),$$
in which $g_I$ is  a vector with components $g_i, i\in I$.
We can now apply Theorem~\ref{2} according to which the mapping $T$ in \eqref{KKTmap} is  strongly subregular at $(\bx,\by)$ for $0$ if and only if the mapping $L$ defined in \eqref{LKKT} has the same property. The graph of the   mapping $L$
is the union of polyhedral convex sets hence the strong subregularity of $T$ is equivalent to the property that
the  vector $(\bx, \by)$   is
an isolated point in $L^{-1}(0)$.

 Without loss of generality suppose that $I_1 = \{1,2,\dots, s_1\}$ and $I_2 = \{s_1 +1, \dots, s_2\}$. Denote by $B_1$ and $B_2$ the submatrices of $B$ corresponding to the index sets $I_1$ and $I_2$, respectively; that is, the rows of $B_1$ are the vectors $\nabla g_i (\bx), i = 1,2,\dots, s_1$, and analogously for $B_2$.

Let (i) hold.  We will now show that $(0,0)$ is the unique solution of the variational inequality
\bea
&& Ax +B^Ty=0, \label{a1} \\
&& B_1x = 0,    \label{a2}\\
&& B_2x \in N_{\reals^{I_2}_+}(y_{I_2}),  \label{a3}
\eea
where  $y_{I_2}$ is the subvector of $y$ whose components have indices in $I_2$ and  $\reals^{I_2}_+$ is the set of vectors $y_{I_2}$ with nonnegative components.
  Suppose that the mapping $T$ is not strongly subregular at $(\bx, \by)$ for $0$. Then there is a nonzero vector $(x,y)$ satisfying \eqref{a1}--\eqref{a3}. Assume that $x\neq 0$.
Multiplying \eqref{a1} by $x$ and taking into account \eqref{a2} and \eqref{a3} we obtain $\la x,Ax\ra = 0$ which contradicts \eqref{sosc}.  Hence $x=0$. But then there exists a nonzero $y\in \reals^m$ such that $B^Ty = 0$ and $0 \in N_{\reals_+^{I_2}}(y_{I_2})$,
hence $y_{I_2} \geq 0$. This contradicts \eqref{smf}.  Thus the mapping $T$
in \eqref{KKT} is  strongly subregular at $(\bx,\by)$ for $0$. It is a standard fact  that when $(\bx,\by)$ satisfies
\eqref{KKT} and  the second order sufficient condition \eqref{sosc}, then  $\bx$ is a strong  local solution of problem \eqref{np}. Hence,  (ii) is established.

In the opposite direction, suppose that the conditions in (ii) are satisfied. Then from the analysis in the beginning of the proof we conclude that
the  vector $(\bx, \by)$   as an isolated point in $L^{-1}(0)$. This in turn yields that $(0,0)$ is the unique solution of the variational inequality \eqref{a1}--\eqref{a3}. But this immediately implies \eqref{smf}. Furthermore, from
the assumed optimality of $\bx$  the second-order necessary condition \eqref{sonc} holds:
$$ \la x', Ax'\ra \geq 0 \text{for all nonzero}x' \in K.$$
We only need to show that this inequality is strict. On the contrary, suppose that there exists a nonzero $x' \in K$ such that
$Ax'=0$. Then the nonzero vector $(x',0)$ is a solution of \eqref{a1}--\eqref{a3}, a contradiction. Hence the conditions in (i) are satisfied.
\end{proof}

Theorem~\ref{nlp} partially extends   \cite[Theorem 2.6]{AT} with a new proof; in the latter theorem   it is also  shown that under the conditions in (i) there exist neighborhoods $U$ of $(\bx,\by)$ and $V$ of $0$ such that
for every $v \in V$ the   set  $ T^{-1}(v)\cap U$ is nonempty.

\subsection{A radius theorem}

A classical result, sometimes  called the Eckart-Young theorem, says that
 for any nonsingular matrix $A \in \reals^{n\times n}$,
$$\inf \lset \|B\| \mid A+B \text{singular} \rset
= \frac{1}{\|A^{-1}\|} \, . $$
A far reaching generalization of this result was proved in \cite{rad}, see also \cite[Theorem 6A.7]{book}, for the property of  metric regularity  of a set-valued mapping $F$ acting between Euclidean spaces. This result was extended later in \cite[Theorem 5.12]{regcon}, see also   \cite[Theorem 6A.9]{book}, to the property of strong subregularity as follows:

\begin{theorem} \label{11}
Consider a mapping
$F:\reals^n \tto \reals^m$  which  is  strongly subregular  at $\bx$ for $\by$.  Then
$$
 \inf _{B\in {\cal L}(\reals^n, \reals^m)}\! \Lset \displaystyle{\| B\| \Mset F+B
     \!\text{is not strongly   subregular at $\bx$  for  $\by +B\bx$} \Rset
       =\frac{1}{\subreg (F;\bx \for\by )}. }
      $$
Moreover, the infimum  remains unchanged  when either taken with respect to linear mappings
of rank 1 or enlarged to all  functions $f$ that are  calm at $\bx$, with
$\|B\|$ replaced by the calmness modulus $\,\clm (f;\bx)$ of $f$ at $\bx$.
\end{theorem}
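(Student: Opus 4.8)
The plan is to prove the equality by establishing the two inequalities separately and to read off the two refinements from the same two arguments. Write $\kappa:=\subreg(F;\bx\for\by)$; we may assume $\kappa\in(0,+\infty)$, for if $\kappa=0$ then $1/\kappa=+\infty$ and, by the lower‑bound argument below, \emph{no} perturbation (of any of the three kinds) destroys strong subregularity, so both sides equal $+\infty$.

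\emph{The inequality ``$\ge 1/\kappa$''.} Here I would argue that a perturbation of sufficiently small size leaves strong subregularity intact. Let $B\in\mathcal{L}(\reals^n,\reals^m)$ with $\|B\|<1/\kappa$. Since $B$ is calm at $\bx$ with $\clm(B;\bx)=\|B\|$, choosing $\kappa_1\in(\kappa,1/\|B\|)$ and a neighborhood $\ball_a(\bx)$ on which $F$ is strongly subregular with constant $\kappa_1$, Theorem~\ref{main} applies with $G:=F$ and $g:=B$ and gives that $B+F$ is strongly subregular at $\bx$ for $\by+B\bx$. Hence such a $B$ is not in the infimum set, so that infimum is $\ge 1/\kappa$. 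Running the same argument with $g:=f$ an arbitrary function calm at $\bx$ shows that every $f$ with $\clm(f;\bx)<1/\kappa$ keeps $f+F$ strongly subregular at $\bx$ for $\by+f(\bx)$; this yields ``$\ge 1/\kappa$'' for the enlarged infimum over calm functions too (and, a fortiori, for the infimum over rank‑one maps).

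\emph{The inequality ``$\le 1/\kappa$''.} For each $\kappa'\in(0,\kappa)$ I would produce a rank‑one $B$ with $\|B\|\le 1/\kappa'$ for which $F+B$ fails to be strongly subregular. Since $\kappa'<\subreg(F;\bx\for\by)$, every neighborhood of $\bx$ contains a point $x\neq\bx$ with $\|x-\bx\|>\kappa'\,d(\by,F(x))$; in particular $F(x)\neq\emptyset$. Thus there are $x_k\to\bx$, $x_k\neq\bx$, and $y_k\in F(x_k)$ with $\|x_k-\bx\|>\kappa'\|y_k-\by\|$. Put $t_k:=\|x_k-\bx\|\downarrow 0$, $u_k:=(x_k-\bx)/t_k$, $v_k:=(y_k-\by)/t_k$, so $\|u_k\|=1$ and $\|v_k\|<1/\kappa'$; passing to a subsequence (finite dimensions) we get $u_k\to\bar u$ with $\|\bar u\|=1$ and $v_k\to\bar v$ with $\|\bar v\|\le 1/\kappa'$. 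Define $B\in\mathcal{L}(\reals^n,\reals^m)$ by $Bx:=-\langle\bar u,x\rangle\bar v$; then $B$ has rank $\le 1$, $\|B\|=\|\bar u\|\,\|\bar v\|=\|\bar v\|\le 1/\kappa'$, and $B\bar u=-\bar v$. Since $y_k+Bx_k\in(F+B)(x_k)$ and $y_k+Bx_k-(\by+B\bx)=t_k(v_k+Bu_k)$ with $v_k+Bu_k\to\bar v+B\bar u=0$, we get
$$ d\big(\by+B\bx,\,(F+B)(x_k)\big)\le t_k\,\|v_k+Bu_k\|=o(t_k)=o(\|x_k-\bx\|), $$
so $F+B$ cannot be strongly subregular at $\bx$ for $\by+B\bx$. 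Letting $\kappa'\uparrow\kappa$ gives ``$\le 1/\kappa$'' for the infimum over all $B$; the constructed $B$ is genuinely of rank one (if $\bar v=0$ then $\|x_k-\bx\|/\|y_k-\by\|\to\infty$, contradicting strong subregularity of $F$ itself), so also for the infimum over rank‑one maps; and since a linear map is calm with calmness modulus equal to its norm, also for the enlarged infimum over calm functions. Combining the two inequalities yields all three equalities.

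\emph{Main obstacle.} The delicate point is the passage to the limit in the ``$\le$'' part: extracting convergent subsequences of $\{u_k\}$ and $\{v_k\}$ and keeping the relation $y_k\in F(x_k)$ in the limit — which produces the direction $(\bar u,\bar v)\in T_{\gph F}(\bx,\by)$, equivalently $\bar v\in DF(\bx\for\by)(\bar u)$ — is exactly where finite‑dimensionality of \emph{both} $\reals^n$ and $\reals^m$ is essential, and this is why the radius theorem is confined to Euclidean spaces. One also has to check that the resulting $B$ is genuinely of rank one and that the degenerate case $\kappa=0$ is subsumed in the lower‑bound argument. An alternative route to the same construction, which additionally shows the infimum is \emph{attained}, is to use Theorem~\ref{thm4E1} (so $\|DF(\bx\for\by)^{-1}\|^+=\kappa$ in finite dimensions), pick $(\bar u,\bar v)\in\gph DF(\bx\for\by)$ realizing the outer norm with $\|\bar u\|=\kappa$ and $\|\bar v\|=1$, and set $Bx:=-\kappa^{-2}\langle\bar u,x\rangle\bar v$, which has $\|B\|=1/\kappa$.
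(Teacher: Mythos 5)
Your proposal is correct. Note, however, that the paper itself does not prove Theorem~\ref{11}: it quotes the result from \cite[Theorem 5.12]{regcon} and \cite[Theorem 6A.9]{book}, so there is no in-paper proof to match against. Your argument is a valid self-contained proof: the lower bound is exactly the standard route via the perturbation result (Theorem~\ref{main}), applied with $g=B$ (or a general calm $f$), using $\clm(B;\bx)=\|B\|$; the upper bound replaces the cited machinery --- the graphical-derivative characterization $\subreg(F;\bx\for\by)=\|DF(\bx\for\by)^{-1}\|^{+}$ of Theorem~\ref{thm4E1} combined with an Eckart--Young-type radius statement for the outer norm of positively homogeneous mappings --- by a direct sequential construction: from the failure of \eqref{sr1} with constant $\kappa'<\kappa$ you extract $(u_k,v_k)\to(\bar u,\bar v)$ (compactness of both unit balls, hence the restriction to $\reals^n,\reals^m$) and build the rank-one map $Bx=-\langle\bar u,x\rangle\bar v$ with $\|B\|\le 1/\kappa'$ and $B\bar u=-\bar v$, which visibly destroys strong subregularity since $d\big(\by+B\bx,(F+B)(x_k)\big)=o(\|x_k-\bx\|)$. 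This is essentially an unpacking of the cited proof, and your closing remark correctly identifies the alternative (and the attainment of the infimum) via Theorem~\ref{thm4E1}; the edge case $\subreg(F;\bx\for\by)=0$, the strictness needed to pick $y_k\in F(x_k)$ with $\kappa'\|y_k-\by\|<\|x_k-\bx\|$, and the nonvanishing of $\bar v$ (hence genuine rank one) are all handled correctly. The only implicit convention worth flagging is that $\|B\|=\|\bar u\|\,\|\bar v\|$ uses the Euclidean (inner-product) norms on $\reals^n$ and $\reals^m$; with general norms one would take a norming functional in place of $\langle\bar u,\cdot\rangle$, which changes nothing essential.
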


Note that in  Theorem~\ref{11} the perturbation is represented by an {\em arbitrary} linear and bounded mapping $B$. In a number of cases, however, one should focus on mappings that have special structure. Such a situation arises in particular when one attempts to determine the ``radius of good behavior" of an optimization problem. To be specific, consider  the problem
\be\label{o}
         \text{minimize} \, g(x)  \text{  over} x \in C,
\ee
where $C$ is a nonempty  {\em polyhedral}  convex subset of $\reals^n$ and $g:\reals^n \to \reals$ is twice continuously differentiable everywhere.  The first-order necessary optimality condition for problem (\ref{o}) has the form
\be\label{vii}
          \nabla g(x) + N_C(x) \ni 0.
\ee
In the sequel the mapping $x \mapsto \nabla g(x) + N_C(x)$ is called the {\em optimality mapping.} Every solution of the variational inequality \eqref{vii} is said to be a {\em critical point}.
 The {\em critical cone} at $\bx$ for $-\nabla g(\bx)$ is defined as
$$
     K_C(\bx, -\nabla g(\bx)) = T_C(\bx)\cap [-\nabla g(\bx)]^\perp.
$$
 The  second-order
sufficient optimality condition for problem (\ref{o})  has the form
\be\label{sosc1}
   \langle u, \nabla^2 g(\bx) u \rangle > 0
                  \quad \text{for all nonzero}  u \in  K_C(\bx,-\nabla g(\bx)).
\ee
The following theorem is proved in \cite[Theorem 4G.4]{book}:

\begin{theorem}\label{thm1} Let $\bx $ be a critical point for (\ref{o}).
Then the
following are equivalent:
\paritem{(a)} the second-order sufficient condition \eqref{sosc1} holds at $\bx$;
\paritem{(b)}  the point $\bx$ is a local minimizer for problem \eqref{o} and
              the optimality  mapping $\nabla g +N_C$ is strongly subregular  at  $\bx$ for $0$.
  \newline\indent  In either case, $\bx$  is actually a strong local minimizer.
\end{theorem}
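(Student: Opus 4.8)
The plan is to strip the problem down to a linear variational inequality on the critical cone and then read off the second‑order conditions. Since $g$ is $C^2$, the map $\nabla g$ is differentiable with derivative $\nabla^2 g$, and the criticality of $\bx$ means precisely that $-\nabla g(\bx)\in N_C(\bx)$, so Theorem~\ref{2} applies with $f=\nabla g$, $F=N_C$ and $\by=-\nabla g(\bx)$ (here $f(\bx)+\by=0$). Because $C$ is polyhedral convex, $\gph N_C$ is a finite union of polyhedral convex sets, and hence so is the graph of
$$L:=\nabla g(\bx)+\nabla^2 g(\bx)(\cdot-\bx)+N_C,$$
it being the image of $\gph N_C$ under an invertible affine transformation. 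Thus the second part of Theorem~\ref{2} (which already incorporates Theorem~\ref{poly}) gives directly: the optimality mapping $\nabla g+N_C$ is strongly subregular at $\bx$ for $0$ if and only if $\bx$ is an isolated point of $L^{-1}(0)$.

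Next I would invoke the standard critical‑cone reduction for polyhedral normal‑cone mappings (see \cite[Section 2E]{book}): after the translation $(x,v)\mapsto(x-\bx,\,v+\nabla g(\bx))$ the graph of $N_C$ near $(\bx,-\nabla g(\bx))$ coincides with the graph of $N_{K}$ near $(0,0)$, where $K:=K_C(\bx,-\nabla g(\bx))$. Writing $w=x-\bx$ and substituting into the inclusion $-\nabla g(\bx)-\nabla^2 g(\bx)(x-\bx)\in N_C(x)$ that defines $L^{-1}(0)$, one obtains that, locally, $x\in L^{-1}(0)$ is equivalent to $w$ solving the linear VI $\nabla^2 g(\bx)w+N_K(w)\ni 0$. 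This reduced inclusion is positively homogeneous, so its solution set is a cone; hence $\bx$ is isolated in $L^{-1}(0)$ exactly when $w=0$ is the only solution of $\nabla^2 g(\bx)w+N_K(w)\ni 0$. So everything reduces to comparing this uniqueness with the condition \eqref{sosc1}.

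For the ``easy'' comparison: if \eqref{sosc1} holds and some $w\neq0$ solved the reduced VI, then $-\nabla^2 g(\bx)w\in N_K(w)$; since $K$ is a convex cone containing $0$ and $2w$, normality forces $\la w,\nabla^2 g(\bx)w\ra=0$, contradicting \eqref{sosc1}; hence the reduced solution is unique and, tracing back, $\nabla g+N_C$ is strongly subregular at $\bx$ for $0$. For the converse, assume $w=0$ is the unique reduced solution but \eqref{sosc1} fails; using the second‑order necessary condition $\la w,\nabla^2 g(\bx)w\ra\ge0$ on $K$ — available because $\bx$ is a local minimizer in case (b) — there is a nonzero $\bar w\in K$ minimizing $q(w)=\tfrac12\la w,\nabla^2 g(\bx)w\ra$ over $K$ with $q(\bar w)=0$; differentiating $t\mapsto q\big(\bar w+t(y-\bar w)\big)$ at $t=0^+$ along arbitrary $y\in K$ and using $q(\bar w)=0$ yields $\la\nabla^2 g(\bx)\bar w,y\ra\ge0$ for all $y\in K$ together with $\la\nabla^2 g(\bx)\bar w,\bar w\ra=0$, i.e. $-\nabla^2 g(\bx)\bar w\in N_K(\bar w)$; then $\bar w\neq0$ solves the reduced VI, a contradiction, so \eqref{sosc1} holds.

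It remains to record that in either case $\bx$ is a \emph{strong} local minimizer, which is the classical second‑order sufficiency argument: if not, take $x_k\in C$, $x_k\to\bx$, $x_k\neq\bx$, with $g(x_k)\le g(\bx)+o(\|x_k-\bx\|^2)$; normalize $u_k=(x_k-\bx)/\|x_k-\bx\|$, pass to a subsequential limit $u$ with $\|u\|=1$, and combine $-\nabla g(\bx)\in N_C(\bx)$ (so $\la\nabla g(\bx),x_k-\bx\ra\ge0$) with the Taylor expansion of $g$ to get $u\in K_C(\bx,-\nabla g(\bx))$ and $\la u,\nabla^2 g(\bx)u\ra\le0$, contradicting \eqref{sosc1}. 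Assembling: (a)$\Rightarrow$(b) uses the easy comparison plus this classical argument, and (b)$\Rightarrow$(a) uses the converse comparison (plus the fact that a local minimizer satisfies the second‑order necessary condition). The step I expect to be the main obstacle is the critical‑cone reduction together with the ``isolated $\iff$ unique'' passage — matching neighborhoods correctly and using polyhedrality and positive homogeneity — while the remainder is bookkeeping with Theorems~\ref{2} and~\ref{poly} and routine quadratic‑form calculus.
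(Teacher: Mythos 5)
Your proof is correct and follows essentially the route the paper relies on: the paper itself gives no argument for this statement (it simply cites \cite[Theorem 4G.4]{book}), and your combination of Theorem~\ref{2} with the polyhedral isolatedness criterion of Theorem~\ref{poly}, the critical-cone reduction of $\gph N_C$, the positive-homogeneity ``isolated $\iff$ unique'' step, and the classical quadratic-form second-order arguments is exactly the standard proof, paralleling the paper's own proof of Theorem~\ref{nlp}. The only point worth making explicit is that the second-order necessary condition on $K$ invoked in (b)$\Rightarrow$(a) is valid here precisely because $C$ is polyhedral (every $w\in K$ is a feasible direction at $\bx$, so no curvature term appears); with that remark the argument is complete.
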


We now apply this last result to obtain  a  radius theorem for problem (\ref{o}). Let $\bx$ be a local minimizer for \eqref{o}. Along with \eqref{o} we consider the perturbed problem
\be\label{pp}\min\left[g(x)+ \frac{1}{2}\la x-\bx, B(x-\bx)\ra\right] \text{over} x\in C,\ee
where $B \in \reals^{n\times n}$ is  a symmetric matrix which enters the quadratic form representing the perturbation.

  \begin{theorem}\label{rado} Let $\bx$ be  a local minimizer for \eqref{o}, let
  $A = \nabla^2 g(\bx)$ and
  $K$ be the associated critical cone, and let
   the second-order sufficient condition (\ref{sosc1}) holds at $\bx$. Then
   \be\label{rr1}   \begin{array}{ll}\inf
\limits_{B \in \reals^{n\times n} {\rm symmetric}} \bigg\{\|B\|
  \mid&\text{$\bx$ is a local minimizer of \eqref{pp} and } \\
  &\text{the
second-order
condition for \eqref{pp}}\\
&\text{does not hold at $\bx$}\bigg\}\qquad
 = \min\limits_{\substack{{x \in K}\\{\|x\|=1}} }\la x, Ax\ra. \end{array}\ee
\end{theorem}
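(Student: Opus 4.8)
The plan is to translate \eqref{rr1} into a question about the quadratic form $u\mapsto\langle u,(A+B)u\rangle$ restricted to the critical cone $K$, and then to settle that question by an Eckart--Young type argument on $K$.

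First I would record that the quadratic tail $x\mapsto\frac12\langle x-\bx,B(x-\bx)\rangle$ and its gradient both vanish at $\bx$; hence, for every symmetric $B$, the point $\bx$ remains a critical point of \eqref{pp}, the critical cone of \eqref{pp} at $\bx$ is still $K=K_C(\bx,-\nabla g(\bx))$, and the Hessian at $\bx$ of the objective of \eqref{pp} equals $A+B$, while its optimality mapping is $\nabla g(\cdot)+B(\cdot-\bx)+N_C(\cdot)$. Therefore the second-order sufficient condition \eqref{sosc1} for \eqref{pp} is nothing but positive definiteness of $A+B$ on $K$, and, by Theorem~\ref{thm1} applied to \eqref{pp}, it is equivalent to $\bx$ being a strong local minimizer of \eqref{pp} at which the optimality mapping is strongly subregular at $\bx$ for $0$. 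So the perturbations entering the infimum on the left of \eqref{rr1} are exactly the symmetric matrices $B$ for which $A+B$ fails to be positive definite on $K$, and the theorem reduces to
$$
 r:=\inf\bigl\{\,\|B\|\ \bigm|\ B=B^{T},\ \ A+B\ \text{is not positive definite on}\ K\,\bigr\}=\rho ,\qquad \rho:=\min_{\substack{x\in K\\ \|x\|=1}}\langle x,Ax\rangle .
$$
When $K=\{0\}$ both sides are $+\infty$ by the usual conventions, so I would henceforth assume $K\neq\{0\}$; since $K$ is a closed cone, $K\cap\{\|x\|=1\}$ is nonempty and compact, so the minimum defining $\rho$ is attained and, by \eqref{sosc1}, $\rho>0$.

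For $r\ge\rho$ I would reason by contraposition: if $\|B\|<\rho$ then for every nonzero $u\in K$
$$
 \langle u,(A+B)u\rangle\ \ge\ \langle u,Au\rangle-\|B\|\,\|u\|^{2}\ \ge\ (\rho-\|B\|)\,\|u\|^{2}\ >\ 0 ,
$$
so $A+B$ is positive definite on $K$; hence any $B$ that destroys positive definiteness on $K$ has $\|B\|\ge\rho$. For $r\le\rho$ I would display an extremal rank-one perturbation: choosing $\bar u\in K$ with $\|\bar u\|=1$ and $\langle\bar u,A\bar u\rangle=\rho$ and setting $B^{*}:=-\rho\,\bar u\,\bar u^{T}$, one gets a symmetric matrix with $\|B^{*}\|=\rho$ and $\langle\bar u,(A+B^{*})\bar u\rangle=\rho-\rho=0$, so $A+B^{*}$ is not positive definite on $K$, whence $r\le\|B^{*}\|=\rho$. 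Combining the two bounds yields $r=\rho$, and moreover the infimum in \eqref{rr1} is attained, even by a perturbation of rank one (paralleling the last assertion of Theorem~\ref{11}).

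The analytic ingredients here --- the Cauchy--Schwarz-type estimate and the identity $\|B^{*}\|=\rho$ --- are routine; the one step that needs real care is the reduction in the second paragraph, i.e.\ identifying the family of admissible $B$ in \eqref{rr1} with $\{B=B^{T}:A+B\text{ not positive definite on }K\}$. That identification is exactly where Theorem~\ref{thm1} is used (for the perturbed problem \eqref{pp}), together with the elementary but essential observation that criticality of $\bx$, the critical cone $K$, and the relevant Hessian are all insensitive to adding a quadratic form that vanishes to first order at $\bx$. Once this is granted, the remaining content of \eqref{rr1} is just the cone-restricted analogue of the Eckart--Young theorem.
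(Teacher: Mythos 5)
Your argument is, in substance, the paper's own proof: the same reduction of \eqref{rr1} to $\inf\{\|B\| \mid B=B^{T},\ A+B \mbox{ not positive definite on } K\}$, the same extremal rank-one matrix $-\rho\,\bar u\bar u^{T}$ for the upper bound, and the same Cauchy--Schwarz estimate for the lower bound. The only real difference is the reduction step: you pass directly from the observation that the quadratic tail vanishes to first order at $\bx$ (so $\bx$ stays critical, the critical cone of \eqref{pp} is still $K$, and the Hessian becomes $A+B$) to the statement that the second-order condition for \eqref{pp} is positive definiteness of $A+B$ on $K$, whereas the paper routes through Theorem~\ref{thm1} twice and through the invariance of strong subregularity under linearization (Theorem~\ref{2}), replacing the optimality mapping of \eqref{pp} by $x\mapsto \nabla g(\bx)+(A+B)(x-\bx)+N_C(x)$ before arriving at the same quantity \eqref{l}. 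Your shortcut is more elementary and never needs strong subregularity; the paper's detour buys the thematic link to subregularity but nothing extra for the computation.

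One caveat, which you share with the paper rather than introduce: your claim that the admissible set in \eqref{rr1} is \emph{exactly} $\{B=B^{T} \mid A+B \mbox{ not positive definite on } K\}$ has only one evident inclusion. The constraint on the left of \eqref{rr1} also demands that $\bx$ remain a local minimizer of \eqref{pp}, and neither your $B^{*}=-\rho\,\bar u\bar u^{T}$ nor the paper's $B=-\sigma\tilde x\tilde x^{T}$ is checked to preserve this: with that perturbation $A+B$ is merely positive semidefinite on $K$, which does not force local minimality (already $g(x)=x^{2}/2+x^{3}$ with $C=\reals$ and $B=-1$ shows the borderline case can fail). The paper makes the identical leap when it equates \eqref{rr} with \eqref{l} ``by Theorem~\ref{thm1}'', so your proposal faithfully reproduces the published argument; the lower-bound half $r\ge\rho$ is airtight in both, and the gap in the upper-bound half is inherited from the theorem's formulation, not created by your proof.
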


\begin{proof} From Theorem \ref{thm1} the quantity on the left side of \eqref{rr1} is the same as
the quantity
  \be\label{rr}   \begin{array}{ll}\inf

\limits_{B \in \reals^{n\times n} {\rm symmetric} } \bigg\{\|B\|
  \mid &\text{$\bx$ is a local minimizer of \eqref{pp} and  the optimality mapping } \\   &\text{ for  \eqref{pp} is  not strongly subregular
    at $\bx $ for $0$} \bigg\}. \end{array}\ee
Since the strong subregularity is stable under linearization, the optimality mapping  $x\mapsto \nabla g(x) +B(x-\bx)  +N_C(x) $ for \eqref{pp} is
not strongly subregular at $\bx $ for $0$ exactly when the mapping
$x\mapsto \nabla g(\bx) +(A+B)(x-\bx)  +N_C(x) $ is not strongly subregular at $\bx $ for $0$.
Then the  quantity  in (\ref{rr}) is the same as
$$   \begin{array}{ll}\inf
\limits_{B \in \reals^{n\times n} {\rm symmetric}  }\bigg\{\|B\|
  \mid &\text{$\bx$ is a local minimizer of \eqref{pp} and  the  mapping } \\   & \quad
\  x \mapsto \nabla g(\bx)  + (A+B)(x-\bx)  +N_C(x)  \\
  & \quad \text{is not strongly subregular
    at $\bx $ for $0$} \bigg\}.\end{array}$$
Since the critical cone $K$ remains the same for the perturbed problem \eqref{pp}, by Theorem~\ref{thm1}  the latter quantity equals
\be\label{l}i:=\inf_{B \in \reals^{n\times n} {\rm symmetric} }\{\|B\| \mid
A+B \text{not positive definite on $K$}\}. \ee
By assumption,   $A$ is symmetric positive definite on the cone $K$, thus we have
$$ \sigma:= \min_{\substack{{x \in K}\\{\|x\|= 1}} }\la Ax, x \ra  >0.$$
Let this minimum be attained for some $\tilde{x}$. The matrix
$$B =- \sigma \tilde{x}\tilde{x}^T$$
is symmetric (and negative definite). We have
$$\la \tilde{x}, (A+B) \tilde{x}\ra = \la \tilde{x},( A-\sigma\tilde{x}\tilde{x}^T) \tilde{x}\ra = \sigma - \sigma  = 0,$$
hence $A+B $ is not positive definite on $K$.
Moreover,
$$\|B\| =  \sup_{\|x\|= 1}  \|Bx\| = \sigma \| \tilde{x}\|  \sup_{\|x\|= 1} \tilde{x}^Tx = \sigma.$$
Thus
\be\label{i}  i \leq \sigma.\ee

To prove the opposite inequality, observe that
for any $n\times n$ matrix $B$ and any  $x \in K$, $\|x\|=1$, we have
$$\la x,(A+B)x \ra \geq \sigma  -  |\la x, Bx\ra |.  $$
Then
$$\min_{\substack{{x \in K}\\{ \|x\|= 1}}}\la x,(A+B)x \ra \geq \sigma -  \sup_{\substack{{x \in K}\\{ \|x\|=1}}}|\la x, Bx\ra |  \geq \sigma -  \sup_{\|x\|= 1 }|\la x, Bx\ra |>0 $$
provided that
$$\sup_{\|x\|=1} |\la x, Bx\ra | =\|B\| < \sigma. $$
Thus, for any symmetric $B$ such that $\|B\| < \sigma$, we have that $A+B$ is positive definite.
Hence,
$i \geq \sigma.$
Putting this together with \eqref{i} we obtain $i=\sigma$. This proves
that the quantity in (\ref{l})  equals the right side of (\ref{rr}).
\end{proof}

Note that when $C=\reals^n$ then the right side of (\ref{rr}) equals the smallest eigenvalue of $A$, which, as well known,
is equal to
the reciprocal of $\|A^{-1}\|$,  and we come to the finite-dimensional version of the extension of the Eckart-Young theorem described  in \cite{Z}: if $A$ is symmetric positive definite, then the
norm of the
smallest in norm symmetric matrix $B$
 such that $A+B$ is singular, equals $1/\|A^{-1}\|$. If $C$ is a subspace, then the radius quantity becomes  $1/\|(M^TAM)^{-1}\|$
where the columns of $M$ form a basis of  $C$.

Finally, we note that various versions of
Theorem~\ref{thm1} are available in the literature {as mentioned in the Introduction.} Theorem~\ref{rado} is new.

\subsection{Discrete approximations in optimal control}

Consider  the following optimal control problem with control constraints:
\be \label{Eoc}
     \text{minimize}  \ \int_0^1 \phi(y(t), u(t)) \dd t
\ee
subject to
$$  \dot{y}(t) =  g(y(t), u(t)), \quad y(0)=  0,\qquad  u(t) \in U \text{for a.e.} t \in  [0,1],$$
 where $\phi:\reals^{n+m} \to \reals$, $g:\reals^{n+m} \to \reals^n$,
$U$ is a  closed convex set in $\reals^m$ of feasible control values, $\dot{y}$ denotes the  derivative of the function $t\mapsto y(t)$ with respect to time $t$, and a.e. means almost every in the sense of Lebesgue measure.
 The admissible  controls $u $ are functions in
  $L^\infty([0,1],\reals^m)$, the space of essentially bounded and measurable functions on  $[0,1]$  with values in $\reals^m$, and
 the state trajectories $y$  belong to  $W^{1,\infty}_0([0,1],{\reals^n})$, the space of Lipschitz
continuous
functions   with  weak derivatives in $L^{\infty}([0,1],\reals^n)$ and value zero at $t=0$. In the sequel we sometimes  use the shortened notation $L^\infty(\reals^n)$ instead of $L^\infty([0,1],\reals^n	)$, etc. We assume that problem \eqref{Eoc} has a solution
$(\by, \bu)$
and also that there exists a closed set
$\Delta \subset \reals^n \times \reals^m$ and a $\delta > 0$
with $\ball_{\delta}(\by(t),\bu(t)) \subset \Delta$
for almost every $t \in [0,1]$ so that
the functions
$\phi$ and $g$ are twice continuously differentiable in an open set containing $\Delta$.

It is well known that { under some mild conditions which we will not reproduce here,}  the first-order necessary
condition { in normal form}  for a weak minimum, known under the name  the Pontryagin maximum principle,  at a solution $(\by, \bu)$  of problem (\ref{Eoc}) can be expressed in terms of
the Hamiltonian  $H(y, u, p) = \phi(y,u) + p^Tg(y,u)$  in the following way:
there exists   $\bp \in W^{1,\infty}(\reals^n)$, the so-called {\em adjoint variable},  such that
$\bx:=(\by,  \bu,  \bp)$ is a solution of the following two-point
boundary value problem coupled with a pointwise in $t$ variational inequality:
\be \label{OS} \left\{
\begin{array}{lll}
    \dot{y}(t)  & = &  g(y(t),u(t)), \quad  y(0) = 0 ,  \\
    \dot{p}(t) & = &  - {\nabla\!}_y H(y(t), u(t), p(t)), \quad p(1) = 0,  \\
     0  & \in &  {\nabla\!}_u H(y(t),u(t), p(t)) + N_U(u(t)),
     \end{array} \right.
\ee
for a.e. $t\in [0,1]$ where, as before,  $N_U(u)$ is the normal cone to the set $U$ at the point $u$.
Denote
$W_1^{1,\infty}(\reals^n) = \{ p \in W^{1,\infty}(\reals^n) \mid p(1) = 0\}$,
and let $X = W_0^{1,\infty}(\reals^n)\times  W_1^{1,\infty}(\reals^n)
\times  L^{\infty}(\reals^m)$ and
$Y= L^{\infty}(\reals^n)\times L^{\infty}(\reals^n)
\times L^{\infty}(\reals^m)$.
Further,  for $x = (y,u,p)$ let
\be \label{EOCf}
      f(x) = \left( \begin{array}{cc}
	               \dot{y} - g(y,u) \\
                   \dot{p}+{\nabla\!}_y H(y, u, p)\\
                   {\nabla\!}_u H(y ,u , p )\end{array}\right)
\quad \mbox{and} \quad
       F(x) =  \left( \begin{array}{cc} 0 \\ 0 \\ N_U(u)   \end{array}\right).
\ee	
The optimality system \eqref{OS} then takes the form of
 the generalized equation
 $0 \in f(x)+F(x),$ where $f:X\to Y$ and $F:X\tto Y$.
 In further lines we  will show  that strong subregularity of the  mapping
$f+F$ described by \eqref{EOCf}   for the optimality system \eqref{OS} provides a basis for obtaining an   error
estimate for a discrete approximation to this system.

Suppose that the optimality  system (\ref{OS}) is solved
inexactly by means of a numerical method applied to
a discrete approximation provided by the Euler scheme. Specifically,
let $N$ be a natural number, let $h = 1/N$
be the mesh spacing, and let $t_i = i h$, $i \in \{0,1, \dots, N\}$.
Denote by  $PL^N_0 (\reals^n)$ the space of piecewise linear and
continuous functions $y_N$ over the grid $\{t_i\}$ with values in $\reals^n$
and such that $y_N(0)=0$, by  $PL^N_1 (\reals^n)$ the space of
piecewise linear and continuous functions $p_N$ over the grid
$\{t_i\}$ with values in $\reals^n$
and such that $p_N(1)=0$, and by $PC^N (\reals^m)$ the space of piecewise constant and
continuous from the right functions over the grid $\{t_i\}$ with
values in $\reals^m$. Clearly, $PL^N_0 (\reals^n) \subset W_0^{1,\infty}(\reals^n)$, $PL^N_1 (\reals^n) \subset W_1^{1,\infty}(\reals^n)$ and  $PC^N (\reals^m) \subset L^{\infty}(\reals^m)$.
Then introduce the products
$X^N = PL^N_0 (\reals^n)\times PL^N_1 (\reals^n)\times PC^N(\reals^m)$
as an approximation space for the triple $(y, u,p)$. We identify $y \in PL^N_0 (\reals^n)$
with the vector $(y^0, \ldots, y^N)$ of its values at the mesh points, and similarly
for the adjoint variable $p$, and $u \in PC^N(\reals^m)$ is regarded as the vector $(u^0, \ldots, u^{N-1})$ of the
values of $u$ in the mesh subintervals.

Now, suppose that, as a result of the computations, for certain natural $N$
 a function $x_N =(y_N, u_N, p_N) \in X^N$ is found that satisfies the  {discrete optimality system}:
\be \label{DOS} \left\{  \begin{array}{lll}
	       y^{i+1} & = &y^i +  h
g(y^i, u^i),
\quad  \quad \quad \quad \quad \ \ \ y^0 = 0 ,  \\
           p^{i} & = &p^{i+1} + h{\nabla\!}_y H(y^{i}, u^i, p^{i+1}), \quad p^N = 0,  \\
           0  & \in &  {\nabla\!}_u H(y^i, u^i, p^i) + N_U(u^i)
           \end{array} \right.
\ee
for $i=0,1,\ldots, N-1$.  The system (\ref{DOS}) represents the Euler discretization
of the optimality system (\ref{OS}) {with  step-size $h=1/N$}.

Suppose that the mapping $f+F$, where $f$ and $F$ are  described in \eqref{EOCf}, is strongly subregular at $\bx$ for $0$.
Then there exist positive scalars $a $ and $\kappa$ such that if $x_N \in \ball_a(\bx)$, then
\bd
      \|x_N - \bx\|_X \leq \kappa d(0, f(x_N)+F(x_N)),
\ed
where the right side of this inequality is the  residual associated with the approximate solution $x_N$.
In our specific case, the residual can be estimated by the norm of a function $w_N \in Y$ defined
for each  $i \in \{0,1, \dots, N-1\}$ and $t \in [t_i,t_{i+1})$ as follows:
\bd
      w_N(t) = \left( \begin{array}{cc}
	            g(y_N(t_i), u_N(t_i))- g(y_N(t), u_N(t))  \\
   {\nabla\!}_y H(y_N(t_i), u_N(t_i), p_N(t_{i+1})) - {\nabla\!}_y H(y_N(t), u_N(t), p_N(t)) \\
 {\nabla\!}_u H(y_N(t_i), u_N(t_i), p_N(t_{i})) - {\nabla\!}_u H(y_N(t), u_N(t), p_N(t))\end{array}\right).
\ed
Thus, estimating the residual reduces to finding an estimate for the norm  $\|w_N\|_Y$.  By the definition of the norm in $Y$ we obtain
\bas
   \|w_N\|_Y & \leq & \max_{0\leq i \leq N-1} \sup_{t_i \leq t \leq t_{i+1}}
        \left[ \, \|g(y_N(t_i), u_N(t_i))- g(y_N(t), u_N(t))\| \right.\\
  & & + \|{\nabla\!}_y H(y_N(t_i), u_N(t_i), p_N(t_{i+1})) - {\nabla\!}_y H(y_N(t), u_N(t), p_N(t))\|\\
  & & \left.+  \|{\nabla\!}_u H(y_N(t_i), u_N(t_i), p_N(t_{i})) -
       {\nabla\!}_u H(p_N(t), u_N(t), p_N(t))\|\right].
\eas
Observe that here $y_N$ is a piecewise linear function across the grid $\{t_i\}$ with uniformly
bounded derivative, since both $y_N$ and $u_N$ are in some $L_\infty$ neighborhood
of $\by$ and $\bu$ respectively. Hence, taking into account that
the functions $g $, ${\nabla\!}_y H$, and ${\nabla\!}_u H$ are continuously
differentiable, this leads us to an estimate of order $O(1/N)$ for the error of the discretization.
Specifically, we obtain the following result:

\begin{theorem} \label{DOC} Assume that {the optimality mapping $f + F$ associated with (\ref{OS}), where $f$ and $F$ are defined in \eqref{EOCf},}
is strongly subregular at $\bx=(\by, \bu, \bp)$ for $0$. Then there exist {$N_0 \in \N$} and positive reals  $a$ and $c$  such that if for an integer $N \geq N_0$ a solution $x_N=(y_N,u_N,p_N)$  of the discrete optimality system \eqref{DOS} satisfies $\|x_N-\bx\|_X \leq a$
then
\be\label{da}
  \|x_N - \bx\|_X \leq \frac{c}{N}.
\ee
\end{theorem}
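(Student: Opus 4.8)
The plan is to obtain \eqref{da} by feeding a residual estimate into the strong subregularity inequality for $f+F$. Strong subregularity of $f+F$ at $\bx$ for $0$ provides constants $\kappa>0$ and $a>0$ such that
$$ \|x-\bx\|_X \leq \kappa\, d\big(0,\ f(x)+F(x)\big) \qquad \mbox{whenever} \quad x\in\ball_{a}(\bx). $$
I would first shrink $a$, if necessary, so that for every $x=(y,u,p)\in\ball_a(\bx)$ the values $(y(t),u(t),p(t))$ lie, for a.e.\ $t$, in the open set on which $g$, $\nabla_y H$ and $\nabla_u H$ are $C^1$; this is legitimate because $\ball_\delta(\by(t),\bu(t))\subset\Delta$ for a.e.\ $t$ and the $W^{1,\infty}$-norm controls the $L^\infty$-norm. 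Applying the inequality at $x=x_N$, which is permitted once $\|x_N-\bx\|_X\leq a$, reduces the theorem to the bound $d\big(0,f(x_N)+F(x_N)\big)\leq C/N$.

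The second step is to show $d\big(0,f(x_N)+F(x_N)\big)\leq \|w_N\|_Y$, i.e.\ that a sign-adjusted copy of $w_N$ is an admissible residual for the generalized equation at $x_N$. On a mesh cell $[t_i,t_{i+1})$ the interpolant $y_N$ is affine with $\dot y_N\equiv(y^{i+1}-y^i)/h=g(y_N(t_i),u_N(t_i))$ by the first line of \eqref{DOS}, so the first component of $f(x_N)$ is $g(y_N(t_i),u_N(t_i))-g(y_N(t),u_N(t))$, which is exactly the first component of $w_N$. Likewise $\dot p_N\equiv(p^{i+1}-p^i)/h=-\nabla_y H(y_N(t_i),u_N(t_i),p_N(t_{i+1}))$ by the second line, so the second component of $f(x_N)$ is the negative of the second component of $w_N$. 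Finally $u_N$ is constant, equal to $u^i$, on the cell, and the third line of \eqref{DOS} says $-\nabla_u H(y_N(t_i),u_N(t_i),p_N(t_i))\in N_U(u^i)=N_U(u_N(t))$; hence the vector $\nabla_u H(y_N(t),u_N(t),p_N(t))-\nabla_u H(y_N(t_i),u_N(t_i),p_N(t_i))$, the negative of the third component of $w_N(t)$, belongs to $\nabla_u H\big(y_N(t),u_N(t),p_N(t)\big)+N_U(u_N(t))$. Therefore the element of $Y$ with components $\big(w_N^{(1)},-w_N^{(2)},-w_N^{(3)}\big)$ lies in $f(x_N)+F(x_N)$ and has $Y$-norm equal to $\|w_N\|_Y$, which yields the claimed distance bound.

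The third step is the routine estimate $\|w_N\|_Y=O(1/N)$ already anticipated in the paragraph before the theorem. From $\|x_N-\bx\|_X\leq a$ the derivatives $\dot y_N$ and $\dot p_N$ are bounded uniformly in $N$: on each cell they equal $g$, respectively $-\nabla_y H$, evaluated at points that lie in $\Delta$, where these functions are continuous and hence bounded. Thus $y_N$ and $p_N$ are Lipschitz on $[0,1]$ with an $N$-independent constant $L$, so $\|y_N(t)-y_N(t_i)\|\leq Lh$, $\|p_N(t)-p_N(t_i)\|\leq Lh$ and $\|p_N(t)-p_N(t_{i+1})\|\leq Lh$ for $t\in[t_i,t_{i+1})$, while $u_N(t)=u_N(t_i)$. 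Since $g$, $\nabla_y H$ and $\nabla_u H$ are $C^1$, hence locally Lipschitz, near $\Delta$, each of the three differences defining $w_N(t)$ is at most a constant multiple of $h=1/N$, so $\|w_N\|_Y\leq C/N$ with $C$ independent of $N$. Chaining the three steps gives $\|x_N-\bx\|_X\leq\kappa C/N=:c/N$, which is \eqref{da}. Here $N_0$ serves only to make $h$ small enough that the point evaluations and the short segments joining $y_N(t)$ to $y_N(t_i)$ (and similarly for $p_N$) stay in the region of smoothness; the choice of $a$ above then takes care of the rest.

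The one mildly delicate point is the second step: correctly reading the residual $f(x_N)+F(x_N)$ off the discrete system \eqref{DOS} — in particular handling the set-valued third block via the discrete variational inequality — and verifying that all evaluations remain in the smoothness region uniformly in $N$. Everything else is a one-line Lipschitz/interpolation estimate combined with the subregularity inequality.
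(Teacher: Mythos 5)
Your proof follows exactly the route the paper takes in the discussion preceding the theorem: apply the strong subregularity inequality at $x_N$, observe that the residual $d\big(0,f(x_N)+F(x_N)\big)$ is bounded by $\|w_N\|_Y$ by reading the discrete system \eqref{DOS} off the Euler interpolants, and bound $\|w_N\|_Y=O(1/N)$ via the uniformly bounded slopes of $y_N,p_N$ and the smoothness of $g$, $\nabla_y H$, $\nabla_u H$. It is correct; indeed your second step spells out explicitly (including the sign adjustments and the handling of the normal-cone block) the admissible-residual verification that the paper only asserts.
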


We should note that the assumption of strong subregularity of  the mapping associated with   (\ref{OS}) and
 considered as a mapping from $X = W_0^{1,\infty}\times  W_1^{1,\infty}
\times  L^{\infty}$ to
$Y= L^{\infty}\times L^{\infty}
\times L^{\infty}$ is quite strong. For example,
it follows from the estimate \eqref{da}
that if the reference optimal control  $\bu$ has a point of discontinuity in $t$, its piecewise constant discrete approximation
$u_N$ must have a jump at the same point. In the
paper \cite{DH},
see also  \cite{DMal},  strong regularity in $L^\infty$ is obtained under {\em coercivity} of the objective function, an assumption which automatically implies continuity of the optimal control $\bu$ as a function of time $t$. Without coercivity, for example,
when the problem is linear in control, one needs metric regularity in larger spaces, for some new results in this direction
see the recent paper \cite{QV}. In such spaces however, it may be not possible to differentiate, and hence to pass to a linearization.
 Theorem~\ref{DOC} should be treated as  a first step towards employing strong subregularity to obtain error estimates for discrete approximations in optimal control.

\end{document}